\documentclass[11pt]{article}

\usepackage{subfigure}
\usepackage{color}
\usepackage[cmex10]{amsmath}
\usepackage{mathrsfs,amssymb,amsmath}
\usepackage{cases}
\usepackage{graphicx}
\usepackage{caption}

\usepackage{epstopdf}
\usepackage{hyperref}

\usepackage{float}
\usepackage{placeins}

\newcommand{\R}{{\mathbb R}}

\def\dref#1{(\ref{#1})}

\usepackage{algorithm} 
\usepackage{algorithmic} 

\newtheorem{corollary}{Corollary}[section]
\newtheorem{remark}{Remark}[section]
\newtheorem{lemma}{Lemma}[section]
\newtheorem{assumption}{Assumption}[section]

\newtheorem{notation}{Notations}[section]
\newtheorem{theorem}{Theorem}[section]
\newtheorem{definition}{Definition}[section]

\newenvironment{proof}{{\bf {Proof.}}}{\hfill $\square$}
\allowdisplaybreaks[4]

\numberwithin{equation}{section}

\def\dref#1{(\ref{#1})}

\def\pt{\partial}

\def\ra{\rightarrow}

\def\s{\subseteq}

\def\e{\varepsilon}

\def\ol{\overline}

\def\vp{\varphi}

\def\bf{\textbf}
\def\pt{\partial}

\def\om{\omega}
\def\Om{\Omega}
\def\la{\lambda}
\def\al{\alpha}
\def\be{\beta}
\def\de{\delta}

\def\Ga{\Gamma}
\def\La{\Lambda}

\def\iy{\infty}

\def\f{\frac}

\def\se{\setminus}

\def\Lra{\Leftrightarrow}

\def\Span{{\rm{span}}}

\def\df{\mathrm d}

\def\wh{\widehat}

\def\essinf{\operatorname*{ess\ \! inf}}

\def\diag{{\rm diag}}

\def\mcL{\mathcal{L}}

\def\mcO{\mathcal{O}}
\def\mcA{\mathcal{A}}

	\DeclareMathOperator{\diam}{diam}

	\DeclareMathOperator{\Div}{div}

	\DeclareMathOperator{\dist}{dist}

	\DeclareMathOperator{\supp}{{supp}}

	\newcommand{\N}{\mathbb N}

\begin{document}

\title{{\bf   {\bf    Some Key Properties of Eigenfunctions Linked to Degenerate Elliptic Differential Operators}}\footnote{\small This work was carried out with the support of the
National Natural Science Foundation of China under grant  nos. 12131008 and U23B2033, and
National Key R\&D Program of China under grant no. 2024YFA1013101.}}

\author{ Dong-Hui Yang$^{a}$, Bao-Zhu Guo$^{b}$\footnote{\small
The corresponding author. Email: bzguo@is.ac.cn}
\\
$^a${\it School of Mathematics and Statistics, Central South University}\\
			{\it Changsha 410075, P.R.China}\\
$^b${\it Academy of Mathematics and Systems Science, Academia Sinica, Beijing 100190, China}}

\date{}

\maketitle{}

\begin{abstract}
    In this study, we address the eigenvalue problem given by:
\begin{equation*}
\begin{cases}
-\Div (w\nabla u_i)=\la_iu_i &\text{in } \Om\subset \mathbb{R}^n,\\
u_i=0 &\text{on } \pt \Om,
\end{cases}
\end{equation*}
where $w > 0$ within $\Om$ and $w = 0$ on part of $\partial \Omega$. We establish Courant's nodal domain theorem for the corresponding degenerate elliptic differential operator $\mathcal{A}$.
Unlike uniformly elliptic operators, degenerate cases often result in the loss of many advantageous properties. Despite this, we show that the essential property that the set $\{\rho \in L^\infty(\Omega) \colon \mathcal{A} + \rho \text{ has simple eigenvalues}\}$ forms a residual subset within $(L^\infty(\Omega), |\cdot|_\infty)$ still holds for the degenerate elliptic differential operator $\mathcal{A}$.

\vspace{0.2cm}

\noindent {\bf {Keywords:}}  Degenerate partial differential equations, Courant's nodal domain theorem, residual property.  
	
\vspace{0.2cm}
		
	\noindent {\bf {AMS subject classifications (2010):}}~  35J70, 34B09. 

	\end{abstract}

	 \section{Introduction}\label{S1}
	
	  Courant's nodal domain theorem was initially introduced in \cite{Hilbert}. This theorem states that the $k$-th eigenfunction of the Laplacian operator $\Delta$, subject to Dirichlet boundary conditions, has at most $k$ nodal domains.  Since its inception, the theorem has been the subject of extensive research within the framework of uniformly elliptic and parabolic equations, as well as concerning the $p$-Laplacian operator; for instance, see \cite{Drabek,Grebenkov,Helffer,Henrot,Sanayei}. However, to the best of our knowledge, no existing results address Courant's nodal domain theorem in the context of degenerate elliptic operators. Proving this theorem is far from trivial; it necessitates the use of tools from spectral theory \cite{Grebenkov,Henrot,Kato}, Sard's theorem \cite{Sard}, and the unique continuation property \cite{Aronszajin,Rousseau} for partial differential operators. These tools now play a pivotal role in the analysis of partial differential equations. It is important to note that the unique continuation property may not hold for certain specialized classes of partial differential operators \cite{Martio}. At present, we are unaware of any method to prove Courant's nodal domain theorem without relying on this property.

 Research on Courant's nodal domain theorem has predominantly advanced along two main avenues: one delves into the generic characteristics of eigenfunctions, while the other scrutinizes the Hausdorff measure of their nodal sets.
The generic properties of eigenfunctions associated with partial differential equations have been explored in a series of studies, including \cite{Albert,Albert1,Grebenkov,Uhlenbeck}. Concurrently, the Hausdorff measure of nodal sets for eigenfunctions of partial differential operators has been the subject of research in works such as \cite{Grebenkov,Han,Logunov,Yau}.
Degenerate partial differential equations have been extensively examined in the existing literature; for illustrative examples, refer to \cite{Cavalheiro,CS2,FF,Fabes,GC,Guo,Heinonen,Wu1}. Nevertheless, to extend Courant's nodal domain theorem to encompass degenerate differential equations, it is imperative to utilize tools from spectral theory, Sard's theorem, and the unique continuation property. Acquiring the unique continuation property for degenerate partial differential operators is itself a formidable challenge; for further details, see \cite{Guo,Wu1}. When shifting the focus from uniformly elliptic operators to degenerate elliptic operators, a multitude of properties experience alterations. For example, whereas the $k$-th eigenfunction $\Phi_k\ (k\in\N)$ typically exhibits $C^\infty(\overline{\Omega})$ regularity in the uniformly elliptic scenario, such smoothness generally does not persist for degenerate elliptic operators; indeed, $\Phi_k$ may even lack classical first-order partial derivatives. As a result, ascertaining the generic properties of eigenfunctions for degenerate elliptic equations presents a significant hurdle.
 
 As delineated in \cite{Albert1}, the generic properties of eigenfunctions associated with uniformly elliptic operators $L$ can be systematically categorized into no fewer than four distinct scenarios: a) every critical point of the eigenfunction $\Phi_k$ exhibits Morse characteristics and is non-nodal; b) critical points that are distinct from one another possess correspondingly distinct critical values; c) the value $0$ does not constitute a critical value, implying the absence of nodal critical points; d) the set $\{\rho\in C^\iy(M)\colon \text{all eigenvalues of } L+\rho \text{ are simple} \}$ forms a residual subset of $C^\iy(M)$. The preceding analysis elucidates that while properties a)-c) prove challenging to establish in degenerate cases, property d) retains its validity: this distinction underscores a principal contribution of the present work.
 
To study degenerate elliptic equations, it is crucial to introduce the concept of $A_p$ weights.
When $p = 1$, a locally integrable non-negative function $w(\cdot)$ is defined as an $A_1$ weight if there exists a constant $c=c(w,1) > 0$ such that for all cubes $K \subset \mathbb{R}^N$,
\begin{equation*}
\frac{1}{|K|}\int_K w(x) \, \mathrm{d}x \leq c \cdot \essinf_{K} w.
\end{equation*}
For $p \in (1, \infty)$, a locally integrable non-negative function $w(\cdot)$ is termed an $A_p$ weight if there exists a constant $c > 0$ such that for all cubes $K$ in $\mathbb{R}^N$,
\begin{equation}\label{01.18.2}
\left(\frac{1}{|K|}\int_K w(x) \, \mathrm{d}x\right)\left(\frac{1}{|K|}\int_K w(x)^{-\frac{1}{p - 1}} \, \mathrm{d}x\right)^{p - 1} \leq c.
\end{equation}
The infimum of the set of constants $c = c(w, p) > 0$ for which \eqref{01.18.2} holds is referred to as the $A_p \ (1\leq p<\iy)$ constant of $w(\cdot)$.
Specifically, taking $w(x)=|x-x_0|^\al \ (0<\al<2)$, we find that $w$ is an $A_{1+\f{2}{N}}$ weight \cite{Heinonen}.

 In this paper, we address the following eigenvalue problem:
\begin{equation}\label{01.08.1}
\begin{cases}
-\Div (w\nabla u_i)=\lambda_iu_i, &\text{in } \Omega,\\
u_i=0, &\text{on } \partial \Omega,
\end{cases}
\end{equation}
where $w\in C(\overline{\Omega})\cap C^{\infty}(\Omega)$ serves as an $A_2$ weight on $\Omega\subset\mathbb{R}^N$, satisfying
\begin{equation}\label{gbz5}
w>0 \text{ on } \Omega \quad \text{and} \quad w=0 \text{ on }\Gamma, 
\end{equation}
and Assumption \ref{06.27.A1}. 
Here, $\Omega$ represents a bounded smooth domain, and $\Gamma\subset \partial\Omega$ is either an open subset or a singleton set. For instance, when $N = 2$, we can define $w(x)=|x - x_0|^{\alpha}$ for $x=(x_1,x_2)\in\mathbb{R}^2$, where $|x - x_0|=\sqrt{(x_1 - x_1^0)^2+(x_2 - x_2^0)^2}$ and $x_0=(x_1^0, x_2^0)\in \partial\Omega$, with $\alpha\in (0,2)$ being a predetermined constant. Further examples are provided in \cite{Guo}. The symbol $\lambda_i\ (i\in\mathbb{N})$ denotes the $i$-th eigenvalue of the partial differential operator
\begin{equation}\label{06.25.1}
\mathcal{A} y=\Div(w \nabla y),
\end{equation}
and $u_i$ corresponds to the $i$-th eigenfunction of $\mathcal{A}$ associated with the $i$-th eigenvalue $\lambda_i$. It is crucial to emphasize that the findings presented in this paper are also applicable to partial differential operators of the following form:
\begin{equation*}
\mathcal{A} y=\Div(A\nabla y),
\end{equation*}
where $A(x)=(a_{ij}(x))_{i,j = 1,\cdots,N}$ with $a_{ij}\in C(\overline{\Omega})\cap C^{\infty}(\Omega)$ for all $i,j = 1,\cdots,N$. Additionally, there exists a constant $\Lambda>0$ such that
\begin{equation*}
\Lambda^{-1}|\xi|^2w(x)\leq \sum_{i,j = 1}^N a_{ij}(x)\xi_i\xi_j\leq \Lambda |\xi|^2 w(x) \quad \text{for all } \xi\in\mathbb{R}^N \text{ and for } x\in \Omega \text{ almost everywhere},
\end{equation*}
where $w(\cdot)$ is defined as previously mentioned.

We proceed as follows. In Section \ref{S2}, we delineate the solution space pertinent to equation \eqref{01.08.1}. Subsequently, in Section \ref{S3}, we establish Courant's nodal domain theorem. In Section \ref{S4}, we demonstrate that the set $\{\rho \in L^\infty(\Omega) \colon \mathcal{A} + \rho \text{ possesses simple eigenvalues}\}$ contains a residual subset within the space $(L^\infty(\Omega), |\cdot|_\infty)$.

The proofs presented in this paper predominantly draw upon functional analysis principles; nevertheless, numerous specifics are tailored to the degenerate scenario. To ensure thoroughness, we furnish comprehensive proofs, even when they overlap with those found in existing literature.

\section{Solution spaces}\label{S2}

 In this section, we introduce the solution spaces for equation \eqref{01.08.1}. These spaces are known as weighted Sobolev spaces (see \cite{GC,Heinonen}) and differ from classical Sobolev spaces for uniformly elliptic partial differential equations, as the measures $w\df x$ may not be invariant under transformations.
We define
\begin{equation*}
H^1(\Om;w)=\left\{u\in L^2(\Om)\colon \int_\Om |\nabla u|^2w\df x+\int_\Om u^2 \df x<+\infty\right\},
\end{equation*}
with   inner product
\begin{equation*}
(u,v)_{H^1(\Om;w)}=\int_\Om (\nabla u\cdot\nabla v)w\df x+\int_\Om uv\df x,
\end{equation*}
and norm
\begin{equation*}
\|u\|_{H^1(\Om;w)}=\left(\int_\Om |\nabla u|^2w\df x+\int_\Om u^2\df x\right)^{\frac{1}{2}}.
\end{equation*}
Let
\begin{equation*}
H_0^1(\Om;w)=\text{the completion of } C_0^\infty(\Om) \text{ in } (H^1(\Om;w), \|\cdot\|_{H^1(\Om;w)}).
\end{equation*}
Moreover, we define
\begin{equation*}
H^2(\Om;w)=\left\{u\in H^1(\Om;w)\colon \|u\|_{H^1(\Om;w)}^2+\int_\Om |\mcA u|^2\df x<\infty\right\},
\end{equation*}
with inner product
\begin{equation*}
(u,v)_{H^2(\Om;w)}=(u,v)_{H^1(\Om;w)}+\int_\Om (\mcA u)(\mcA v)\df x,
\end{equation*}
and norm
\begin{equation*}
\|u\|_{H^2(\Om;w)}=\left(\|u\|_{H^1(\Om;w)}^2+\int_\Om |\mcA u|^2\df x\right)^{\frac{1}{2}}.
\end{equation*}
The following lemma is a straightforward conclusion from \cite{GC,Heinonen}.
\begin{lemma}\label{06.04.L1}
The spaces $(H^1(\Om;w), (\cdot, \cdot)_{H^1(\Om;w)})$ and $(H^2(\Om;w), (\cdot, \cdot)_{H^2(\Om;w)})$ are Hilbert spaces.
\end{lemma}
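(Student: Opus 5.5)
The inner products are clearly bilinear, symmetric and positive definite on these spaces, so everything reduces to \emph{completeness}. The plan is to take a Cauchy sequence, obtain candidate limits in plain $L^2$ spaces, and identify them with the weak derivatives of the limit by passing to the limit in the integration-by-parts identities. Throughout, derivatives are understood in the distributional sense on $\Om$. Since $w\in C^\infty(\Om)$ and $w>0$ in $\Om$, the weight is locally bounded above and below on compact subsets of $\Om$. Hence $\nabla u\in L^2(\Om;w)$ implies $\nabla u\in L^2_{\mathrm{loc}}(\Om)$, and $u\in W^{1,2}_{\mathrm{loc}}(\Om)$, so weak derivatives make sense. (Alternatively, the $A_2$ condition gives $w^{-1}\in L^1_{\mathrm{loc}}$, so $L^2(w)\subset L^1_{\mathrm{loc}}$ by Cauchy--Schwarz. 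This is the route in \cite{Heinonen,GC} and does not even need positivity in the interior.)

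\textbf{Step 1: $H^1(\Om;w)$.} Let $(u_k)$ be Cauchy in $H^1(\Om;w)$. Then $(u_k)$ is Cauchy in $L^2(\Om)$ and $(\nabla u_k)$ is Cauchy in $L^2(\Om;w)^N$. These are complete $L^2$ spaces for the measures $\df x$ and $w\,\df x$, so $u_k\to u$ in $L^2(\Om)$ and $\nabla u_k\to V$ in $L^2(\Om;w)^N$. To show $V=\nabla u$ weakly, fix $\varphi\in C_0^\infty(\Om)$ with support in a compact $K\subset\Om$. On $K$ we have $w\ge c_K>0$, so
\[
\int_K |\nabla u_k-V|\,|\varphi|\,\df x\le c_K^{-1/2}\|\varphi\|_{L^2}\Big(\int_\Om|\nabla u_k-V|^2w\,\df x\Big)^{1/2}\to0 .
\]
Passing to the limit in $\int u_k\pt_j\varphi\,\df x=-\int \pt_j u_k\,\varphi\,\df x$ then gives $\pt_j u=V_j$. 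Hence $u\in H^1(\Om;w)$ and $u_k\to u$ in norm.

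\textbf{Step 2: $H^2(\Om;w)$.} Here $\mcA u$ is understood distributionally, as $\langle \mcA u,\varphi\rangle=-\int_\Om w\nabla u\cdot\nabla\varphi\,\df x$. This is well defined for $u\in H^1(\Om;w)$ because $w\nabla u\in L^1_{\mathrm{loc}}$. Let $(u_k)$ be Cauchy in $H^2(\Om;w)$. By Step 1, $u_k\to u$ in $H^1(\Om;w)$, and $\mcA u_k\to f$ in $L^2(\Om)$. For every test function $\varphi$,
\[
\Big|\int_\Om w(\nabla u_k-\nabla u)\cdot\nabla\varphi\,\df x\Big|\le \Big(\int_\Om w|\nabla u_k-\nabla u|^2\Big)^{1/2}\Big(\int_\Om w|\nabla\varphi|^2\Big)^{1/2}\to0,
\]
using $w\in C(\ol\Om)$ for the last factor. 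Hence $\mcA u=f\in L^2(\Om)$, so $u\in H^2(\Om;w)$ and $\|u_k-u\|_{H^2(\Om;w)}\to0$.

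\textbf{Expected obstacle.} The only delicate point is the identification of the limits, that is, ensuring that weighted-$L^2$ convergence of gradients controls distributional derivatives. This needs $L^2(\Om;w)\hookrightarrow L^1_{\mathrm{loc}}(\Om)$, which I would justify by interior positivity and continuity of $w$, or more generally by the $A_2$ property. With that in place, the rest is routine. The closedness of $H_0^1(\Om;w)$ as a completion inside the complete space $H^1(\Om;w)$ then follows for free.
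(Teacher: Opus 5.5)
Your proof is correct. It also supplies something the paper does not: the paper gives no argument for this lemma and only says it is a straightforward consequence of \cite{GC,Heinonen}.

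Your route is the direct one. You take Cauchy sequences, obtain limits $u\in L^2(\Om)$, $V\in L^2(\Om;w)^N$ and $f\in L^2(\Om)$, and identify $V=\nabla u$ and $f=\mcA u$ distributionally. That identification uses $L^2(\Om;w)\hookrightarrow L^1_{\mathrm{loc}}(\Om)$ and $\int_\Om w|\nabla\varphi|^2\,\df x<\infty$ for test functions $\varphi$.

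This is not quite what the cited literature does. There the weighted Sobolev space is usually defined as the completion of smooth functions, so completeness is built in. The real issue in that setting is that the gradient of the limit is well defined, and that is where the $A_2$ hypothesis enters, through $w^{-1}\in L^1_{\mathrm{loc}}$.

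The paper instead defines $H^1(\Om;w)$ through weak derivatives and $H^2(\Om;w)$ as a graph-norm space for $\mcA$. For these definitions your identification-of-limits argument is exactly what is needed. Your Step 2 amounts to closedness of the distributional operator $\mcA$ on $H^1(\Om;w)$, which the references do not address explicitly.

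Your main justification, via local two-sided bounds on $w$, uses the paper's standing hypotheses ($w\in C(\ol\Om)\cap C^\infty(\Om)$, $w>0$ in $\Om$) and is the simplest option here. Your $A_2$ alternative is the one that still works when $w$ degenerates at interior points, as in the paper's remark at the end of Section 3. In that case Step 2 should use $w\in L^1_{\mathrm{loc}}(\Om)$ in place of continuity of $w$; this holds for every weight.
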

We now present the assumptions for the space $H_0^1(\Omega; w)$, which play a central role in this paper.

\begin{assumption}\label{06.27.A1}
1)~ The  Poincar\'e inequality holds in $H_0^1(\Om;w)$, i.e., there exists a constant $C>0$ such that
\begin{equation*}
\int_\Om u^2\df x\leq C\int_\Om |\nabla u|^2w\df x
\end{equation*}
for all $u\in H_0^1(\Om;w)$;

2)~ The embedding  $H_0^1(\Om;w)\hookrightarrow W^{1,1}(\Om)$ is continuous;

3)~ The embedding $H_0^1(\Om;w)\hookrightarrow L^2(\Om)$ is compact.
\end{assumption}

\begin{remark}\label{06.27.R1}
From Assumption \ref{06.27.A1},  we obtain that the norm
\begin{equation*}
\|u\|_{H^1(\Om;w)}=\left(\int_\Om |\nabla u|^2w\df x\right)^{\frac{1}{2}}
\end{equation*}
is an equivalent norm on $(H_0^1(\Om;w), \|\cdot\|_{H^1(\Om;w)})$. Here and in what follows, we use this norm on $H_0^1(\Om;w)$.
We note that the Poincar\'e inequality holds for many degenerate elliptic equations, such as the classical Grushin operator on a cube. These inequalities can often be derived from Hardy's inequalities for degenerate elliptic equations. The  Case 2) in Assumption \ref{06.27.A1} implies that the trace of $u\in H_0^1(\Om;w)$ is the classical Sobolev trace.
\end{remark}

As an example, for the special case $w=|x-x_0|^\al$ as described in Section \ref{S1}, we prove that $H_0^1(\Om;w)$ satisfies Assumption \ref{06.04.L2}.
\begin{lemma}\label{06.04.L2}
Let $\al\in (0,2)$. Then there exists a constant $C>0$, depending only on $\al$, such that
\begin{equation*}
\int_\Om |x-x_0|^{\al-2}u^2\df x\leq C\int_\Om |x-x_0|^\al |\nabla u|^2\df x
\end{equation*}
for all $u\in H_0^1(\Om;w)$.
\end{lemma}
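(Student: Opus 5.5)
This weighted Hardy inequality follows from a divergence (integration-by-parts) identity on smooth compactly supported functions, extended by density. By definition of $H_0^1(\Om;w)$ and Fatou's lemma, it suffices to prove the inequality for $u\in C_0^\infty(\Om)$, with $C$ independent of $u$ and $\Om$. Indeed, if $u_k\to u$ in $H_0^1(\Om;w)$, then along a subsequence $u_k\to u$ a.e. (convergence in $L^2$ follows from the norm). Fatou then passes the left-hand side to the limit, while the right-hand sides converge.

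Translating, we may take $x_0=0$ and write $r=|x|$. Consider the vector field $F(x)=x\,r^{\al-2}$. Its divergence is
\begin{equation*}
\Div F = (N+\al-2)\,r^{\al-2}.
\end{equation*}
Since $N\ge 2$ and $\al>0$, the constant $\kappa:=N+\al-2$ is strictly positive. For $u\in C_0^\infty(\Om)$ we integrate $u^2\Div F$ over $\Om\setminus B_\e(0)$ and let $\e\to0$. The boundary term on $\pt B_\e$ is bounded by $\|u\|_\infty^2\,\e^{\al-1}\,|\pt B_\e|\sim \e^{N+\al-2}\to 0$. This yields
\begin{equation*}
\kappa\int_\Om r^{\al-2}u^2\df x=-2\int_\Om u\,r^{\al-2}\,x\cdot\nabla u\df x .
\end{equation*}
Integrability of $r^{\al-2}$ near $0$ holds because $\al-2>-N$. (If $x_0\notin\ol\Om$ no limiting argument is needed.)

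Next we bound the right-hand side. Since $|x|=r$, we have $|u\,r^{\al-2}\,x\cdot\nabla u|\le (r^{\frac{\al-2}{2}}|u|)(r^{\frac{\al}{2}}|\nabla u|)$. Cauchy--Schwarz then gives
\begin{equation*}
\kappa\int_\Om r^{\al-2}u^2\le 2\Big(\int_\Om r^{\al-2}u^2\Big)^{1/2}\Big(\int_\Om r^{\al}|\nabla u|^2\Big)^{1/2}.
\end{equation*}
The left integral is finite, so we may divide by its square root. This proves the claim with $C=4/(N+\al-2)^2$, which depends only on $\al$ (and the dimension, here $N=2$, giving $C=4/\al^2$).

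The main obstacle is the justification near the singular point $x_0$: the vanishing of the boundary term on the small sphere, and the finiteness of $\int r^{\al-2}u^2$ needed to divide. Both rest on $N+\al-2>0$. I would also check that the $\e\to0$ limit on the gradient side is harmless, since $r^{\al-1}|\nabla u|$ is integrable for $\al>0$. With $x_0\in\pt\Om$ and $u$ compactly supported in $\Om$, these issues are in fact absent, since $u$ vanishes near $x_0$. The remaining care is the density step, which is routine by Fatou as described above.
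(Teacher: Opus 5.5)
Your argument is correct and is essentially the paper's: the same vector field $|x-x_0|^{\al-2}(x-x_0)$ and the same Cauchy--Schwarz absorption. For $N=2$ it gives the same constant $(2/\al)^2$, since the paper's identity $\Div F=\al|x-x_0|^{\al-2}$ is the planar case of your $N+\al-2$. The only difference is how you pass to general $u$: the paper works directly with $z\in H_0^1(\Om;w)$ on $\Om\setminus B_\e(x_0)$, drops the inner boundary term by its sign (and tacitly the one on $\pt\Om$ by zero trace), absorbs there where all integrals are finite, and lets $\e\to0^+$, whereas you prove the inequality for $C_0^\infty(\Om)$ functions (where, for $x_0\in\pt\Om$, no boundary terms arise at all) and extend by density and Fatou, which is slightly cleaner.
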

\begin{proof}
Let $z\in H_0^1(\Om;w)$. For any  $\e>0$, since $u\in W^{1,2}(\Om\setminus B_\e(x_0))$, we have
\begin{equation*}
\begin{split}
&2\int_{\Om\setminus B_\e(x_0)}|x-x_0|^{\al-2}z\left[(x-x_0)\cdot \nabla z\right]\df x\\
&=\int_{\Om\setminus B_\e(x_0)}|x-x_0|^{\al-2}(x-x_0)\cdot \nabla (z^2)\df x\\
&=\int_{\Om\cap \partial B_\e(x_0)}z^2|x-x_0|^{\al-2}(x-x_0)\cdot\nu\df S-\al\int_{\Om\setminus B_\e(x_0)}|x-x_0|^{\al-2}z^2\df x,
\end{split}
\end{equation*}
where $\nu$ is the outer normal on $\partial B_\e(x_0)$.
Then, from $(x-x_0)\cdot \nu=-|x-x_0|\leq 0$, we obtain
\begin{equation*}
\begin{split}
\al\int_{\Om\setminus B_\e(x_0)}|x-x_0|^{\al-2}z^2\df x
&\leq -2\int_{\Om\setminus B_\e(x_0)}|x-x_0|^{\al-2}z\left[(x-x_0)\cdot \nabla z\right]\df x\\
&\leq 2\int_{\Om\setminus B_\e(x_0)}\left(|x-x_0|^{\frac{\al}{2}-1}|z|\right)\left(|x-x_0|^{\frac{\al}{2}}|\nabla z|\right)\df x\\
&\leq 2\left(\int_{\Om\setminus B_\e (x_0)}|x-x_0|^{\al-2}z^2\df x\right)^{\frac{1}{2}}\left(\int_{\Om\setminus B_\e(x_0)}|x-x_0|^\al |\nabla z|^2\df x\right)^{\frac{1}{2}},
\end{split}
\end{equation*}
and hence
\begin{equation*}
\begin{split}
\int_{\Om\setminus B_\e(x_0)}|x-x_0|^{\al-2}z^2\df x
&\leq \left(\frac{2}{\al}\right)^2\int_{\Om\setminus B_\e(x_0)}|x-x_0|^\al |\nabla z|^2\df x\\
&\leq \left(\frac{2}{\al}\right)^2\int_{\Om}|x-x_0|^\al |\nabla z|^2\df x.
\end{split}
\end{equation*}
Letting $\e\to 0^+$ leads to the required result.
\end{proof}

\begin{remark}\label{06.04.R1}
From Lemma \ref{06.04.L2}, we obtain
\begin{equation*}
\int_\Om u^2\df x\leq C\int_\Om |\nabla u|^2w\df x,
\end{equation*}
which is the classical Poincar\'e inequality. From this, we get the following norm
\begin{equation*}
\|u\|_{H^1(\Om;w)}=\left(\int_\Om |\nabla u|^2w\df x\right)^{\frac{1}{2}}
\end{equation*}
is the equivalent norm of $(H_0^1(\Om;w),\|\cdot\|_{H^1(\Om;w)})$, which we shall use later.
\end{remark}

\begin{lemma}\label{06.04.L4}
The embedding $H^1(\Om;w)\hookrightarrow W^{1,1}(\Om)$ is continuous.
\end{lemma}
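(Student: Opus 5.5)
The plan is to use Cauchy--Schwarz with the weight split as $w^{1/2}\cdot w^{-1/2}$, and to control the dual weight $w^{-1}$ through the $A_2$ condition. The $L^1$ part of the $W^{1,1}$ norm is immediate, since $\Om$ is bounded:
\begin{equation*}
\int_\Om |u|\df x\leq |\Om|^{1/2}\Big(\int_\Om u^2\df x\Big)^{1/2}\leq |\Om|^{1/2}\|u\|_{H^1(\Om;w)}.
\end{equation*}
Only the gradient term needs work. For $u\in H^1(\Om;w)$ I would write $|\nabla u|=(|\nabla u|w^{1/2})\,w^{-1/2}$ and apply Cauchy--Schwarz:
\begin{equation*}
\int_\Om|\nabla u|\df x\leq \Big(\int_\Om|\nabla u|^2w\df x\Big)^{1/2}\Big(\int_\Om w^{-1}\df x\Big)^{1/2}.
\end{equation*}
Thus everything reduces to showing $w^{-1}\in L^1(\Om)$.

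For that step I would use that $w$ is an $A_2$ weight on $\R^N$, so condition \eqref{01.18.2} with $p=2$ holds. Since $\Om$ is bounded, choose a cube $K\supset\Om$. The $A_2$ inequality gives
\begin{equation*}
\Big(\frac{1}{|K|}\int_K w\df x\Big)\Big(\frac{1}{|K|}\int_K w^{-1}\df x\Big)\leq c(w,2).
\end{equation*}
The first factor is strictly positive, because $w>0$ on $\Om$ by \eqref{gbz5} and $\Om\subset K$ has positive measure. Hence
\begin{equation*}
\int_\Om w^{-1}\df x\leq\int_K w^{-1}\df x\leq c(w,2)\,|K|^2\Big(\int_K w\df x\Big)^{-1}<\infty.
\end{equation*}
Combining the two estimates yields $\|u\|_{W^{1,1}(\Om)}\leq C\|u\|_{H^1(\Om;w)}$, with $C$ depending on $|\Om|$, $|K|$, $\int_K w$ and the $A_2$ constant.

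One technical point remains: $\nabla u$ must be a genuine $L^1$ weak gradient. The space $H^1(\Om;w)$ is defined through $\nabla u$ understood as a distributional gradient, which is locally integrable on compact subsets because $w$ is continuous and positive in $\Om$. The bound above then shows it is globally in $L^1(\Om)$, so $u\in W^{1,1}(\Om)$.

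The main obstacle is the integrability of $w^{-1}$. If the $A_2$ property were only known on cubes inside $\Om$, I would instead cover $\overline\Om$ by finitely many cubes and sum the estimates. In the model case $w=|x-x_0|^\al$ one can also check the integrability directly, since $|x-x_0|^{-\al}$ is integrable whenever $\al<N$.
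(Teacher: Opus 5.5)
Your proof is correct and follows the paper's skeleton: write $|\nabla u|=(|\nabla u|w^{1/2})\,w^{-1/2}$, apply Cauchy--Schwarz, and reduce everything to $w^{-1}\in L^1(\Om)$. The difference is in how you justify that integrability.

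The paper's lemma is part of the verification of Assumption~\ref{06.27.A1} for the model weight $w=|x-x_0|^\al$. It simply encloses $\Om$ in the ball $B_m(x_0)$, $m=\diam\Om+1$, and uses that $|x-x_0|^{-\al}$ is integrable there. This needs $\al<N$, which holds for $\al\in(0,2)$ when $N\geq 2$. You instead extract $\int_\Om w^{-1}\,\df x<\infty$ from the $A_2$ inequality on a single cube $K\supset\Om$, using $\int_K w\,\df x>0$.

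The paper's route is more elementary but tied to the power weight. Yours applies to any weight satisfying the $A_2$ condition on cubes of $\R^N$, which is the paper's definition. It therefore shows that item 2) of Assumption~\ref{06.27.A1} is automatic under the standing $A_2$ hypothesis. This is exactly the fact the paper later records without proof (``if $w$ is an $A_2$ weight then $w^{-1}\in L^1(\Om)$''). You also treat the $\int_\Om|u|\,\df x$ part of the norm and the meaning of $\nabla u$, which the paper leaves implicit.

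One caveat concerns only your fallback remark. Suppose the $A_2$ bound were known just for cubes contained in $\Om$. Covering by finitely many such cubes and summing would then not work. Such cubes do not in general exhaust $\Om$ up to a null set near a curved boundary, and the uncovered part adjoins $\partial\Om$, in particular $\Ga$, where $w^{-1}$ can be large. That variant would need a Whitney decomposition and a chaining argument. Your main argument does not depend on it.
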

\begin{proof}
Denote $m=\diam\Om+1$. From
\begin{equation*}
\begin{split}
\int_\Om |\nabla u|\df x
&\leq \left(\int_\Om |x-x_0|^{-\al}\df x\right)^{\frac{1}{2}}\left(\int_\Om |x-x_0|^{\al}|\nabla u|^2\df x\right)^{\frac{1}{2}}\\
&\leq \left(\int_{B_m(x_0)}|x-x_0|^{-\al}\df x\right)^{\frac{1}{2}}\left(\int_\Om |\nabla u|^2w\df x\right)^{\frac{1}{2}}\leq C_m\|u\|_{H^1(\Om;w)},
\end{split}
\end{equation*}
where the constant $C_m$ depends only on $m$,
we complete the proof of the lemma.
\end{proof}
\begin{remark}\label{06.04.R2}
From Lemma \ref{06.04.L4}, we get the trace of $u\in H_0^1(\Om;w)$ on $\partial\Om$ is the classical Sobolev trace zero.
\end{remark}
\begin{lemma}\label{06.04.L3}
The embedding $H_0^1(\Om;w)\hookrightarrow L^2(\Om)$ is compact.
\end{lemma}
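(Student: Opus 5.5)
We want to show that the embedding $H_0^1(\Om;w)\hookrightarrow L^2(\Om)$ is compact when $w=|x-x_0|^\al$ with $\al\in(0,2)$. The plan is to split $\Om$ into a small ball around the degeneracy point and its complement, and treat the two pieces separately.

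Take a sequence $\{u_k\}$ bounded in $H_0^1(\Om;w)$, say $\int_\Om |\nabla u_k|^2w\df x\le M$. By Remark \ref{06.04.R1}, $\{u_k\}$ is also bounded in $L^2(\Om)$. Since $H_0^1(\Om;w)$ is a Hilbert space (Lemma \ref{06.04.L1}), we may pass to a subsequence converging weakly in $H_0^1(\Om;w)$ to some $u$. It then suffices to show that $\{u_k\}$ is Cauchy in $L^2(\Om)$ along a further subsequence.

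\textbf{Near $x_0$.} For $\de>0$, the key estimate comes from Lemma \ref{06.04.L2}. Since $\al-2<0$, we have $|x-x_0|^{\al-2}\ge \de^{\al-2}$ on $B_\de(x_0)$, and therefore
\begin{equation*}
\int_{\Om\cap B_\de(x_0)}u_k^2\df x\le \de^{2-\al}\int_\Om |x-x_0|^{\al-2}u_k^2\df x\le C\de^{2-\al}M .
\end{equation*}
Because $2-\al>0$, this tail is small uniformly in $k$ once $\de$ is small.

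\textbf{Away from $x_0$.} On $\Om_\de:=\Om\setminus \ol{B_\de(x_0)}$ the weight satisfies $w\ge \de^\al$, so $\{u_k\}$ is bounded in $W^{1,2}(\Om_\de)$. Extend each $u_k$ by zero outside $\Om$; this is legitimate because the functions lie in the completion of $C_0^\infty(\Om)$, and by Remark \ref{06.04.R2} they have zero trace. The extended functions are bounded in $W^{1,2}(\R^N\setminus \ol{B_\de(x_0)})$. On the open set $\R^N\setminus\ol{B_\de(x_0)}$, intersected with a large ball, the boundary is smooth (a sphere), so Rellich--Kondrachov applies there. Alternatively, multiply by a cutoff $\eta_\de$ that vanishes on $B_{\de/2}(x_0)$ and equals $1$ off $B_\de(x_0)$. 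Then $\eta_\de u_k$ is bounded in $H_0^1(\Om')$ for a smooth domain $\Om'\supset\Om$, because $|\nabla\eta_\de|\le C/\de$ and the $u_k$ are $L^2$-bounded. Either way, $\{u_k\}$ is precompact in $L^2(\Om_\de)$.

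\textbf{Diagonal argument.} Choose $\de_j\to0$ and extract nested subsequences converging in $L^2(\Om_{\de_j})$, then take the diagonal subsequence. For this subsequence,
\begin{equation*}
\|u_k-u_l\|_{L^2(\Om)}^2\le \|u_k-u_l\|_{L^2(\Om_{\de_j})}^2+4C\de_j^{2-\al}M .
\end{equation*}
Given $\e>0$, first fix $j$ so that $4C\de_j^{2-\al}M<\e/2$, then take $k,l$ large so that the first term is below $\e/2$. Hence the subsequence is Cauchy in $L^2(\Om)$, and its limit must be $u$.

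\textbf{Main obstacle.} The delicate step is the local compactness away from $x_0$. There $\Om_\de$ may fail to be Lipschitz near points where $\partial B_\de(x_0)$ meets $\partial\Om$, so Rellich--Kondrachov cannot be applied directly on $\Om_\de$. The cutoff-and-extension device above is how I would handle this: it reduces the question to $H_0^1$ of a smooth domain, where the classical Rellich--Kondrachov theorem applies without any regularity assumption on $\Om_\de$.
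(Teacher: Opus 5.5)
Your proof is correct and follows essentially the same route as the paper: Lemma \ref{06.04.L2} gives the uniform bound $\int_{\Omega\cap B_\delta(x_0)}u_k^2\,dx\le C\delta^{2-\alpha}M$ near $x_0$, and classical Rellich compactness handles the part of $\Omega$ where $w$ is bounded below. The differences are only technical. The paper applies Rellich on an intermediate domain with the cone property, lying between $\Omega\setminus B_{2\varepsilon}(x_0)$ and $\Omega\setminus B_{\varepsilon}(x_0)$, and first subtracts the weak limit; your cutoff instead reduces everything to compactness in $H_0^1$, which needs no boundary regularity, and you finish with a diagonal argument.
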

\begin{proof}
Let $\e>0$ be sufficiently small. Assume $\{u_n\}_{n\in\N}\subset H_0^1(\Om;w)$ is bounded, i.e., there exists $M>0$ such that $\|u_n\|_{H^1(\Om;w)}\leq M$ for all $n\in\N$. Then there exists a subsequence of $\{u_n\}_{n\in\N}$, still denoted by itself, and $u_0\in H_0^1(\Om;w)$ such that
\begin{equation*}
u_n\rightharpoonup u_0 \mbox{ weakly in } H_0^1(\Om;w).
\end{equation*}
Replace $u_n$ by $u_n-u_0$ for all $n\in\N$, then $u_n\rightharpoonup 0$ weakly in $H_0^1(\Om;w)$. In the following, we only need to show that $u_n\to 0$ strongly in $L^2(\Om)$ by abstract subsequence.
Now, on one hand, from Lemma \ref{06.04.L2}, we  have
\begin{equation*}
\int_{\Om\cap B_\e(x_0)}u^2\df x=\int_{\Om\cap B_\e(x_0)}|x-x_0|^{2-\al}|x-x_0|^{\al-2}u^2\df x\leq C\e^{2-\al}\int_{\Om\setminus B_\e(x_0)}|\nabla u|^2w\df x\leq C\e^{2-\al}M^2.
\end{equation*}
On the other hand, since $\Om_\e\subset \Om\setminus B_\e(x_0)$ satisfies $\Om_\e \supset \Om\setminus B_{2\e}(x_0)$  and the cone property, we have $H^1(\Om_\e;\al)=H^1(\Om_\e)\hookrightarrow L^2(\Om_\e)$ is compact (\cite[Theorem 6.3, p.168]{Adams}). Then there exists $u_{n(\e)}$ such that
\begin{equation*}
\|u_{n(\e)}\|_{L^2(\Om_\e)}<\e.
\end{equation*}
These imply that
\begin{equation*}
\|u_{n(\e)}\|_{L^2(\Om)}\leq \e +C\e^{2-\al}M^2.
\end{equation*}
This completes the  proof of the lemma.
\end{proof}

\begin{definition}\label{06.04.D1}
Let $w(\cdot)$ be an $A_2$ weight. We call $u\in H_0^1(\Om;w)\cap H^2(\Om;w)$ a {\it weak solution} of
\begin{equation}\label{06.04.1}
\begin{cases}
-\mcA u+Vu=f, &\mbox{in }\Om, \\
u=0, &\mbox{on } \partial\Om,
\end{cases}
\end{equation}
if for every  $\psi\in C_0^\infty(\Om)$, there hods 
\begin{equation*}
\int_\Om (\nabla u\cdot \nabla\psi)w\df x+\int_\Om Vu\psi\df x=\int_\Om f\psi\df x, 
\end{equation*}
where $f\in L^2(\Om)$,  and $0\leq V\in L^\infty(\Om)$.
\end{definition}

Note that if $w(\cdot)$ is an $A_2$ weight then $w^{-1}\in L^1(\Om)$.

\begin{lemma}\label{06.04.L5}
The degenerate elliptic equation \eqref{06.04.1}
has a unique weak solution. Moreover,  the following  estimate holds 
\begin{equation*}
\|\mcA u\|_{L^2(\Om)}+\|u\|_{H^1(\Om;w)}\leq C\|f\|_{L^2(\Om)},
\end{equation*}
where the constant $C>0$ depends only on $\Om$ and $\|V\|_{L^\infty(\Om)}$.
\end{lemma}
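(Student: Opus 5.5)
We will apply the Lax--Milgram theorem on the Hilbert space $H_0^1(\Om;w)$, equipped with the equivalent norm $\|u\|_{H^1(\Om;w)}=(\int_\Om|\nabla u|^2w\df x)^{1/2}$ from Remark \ref{06.27.R1}. Define the bilinear form
\begin{equation*}
B(u,\psi)=\int_\Om (\nabla u\cdot\nabla \psi)w\df x+\int_\Om Vu\psi\df x,\qquad u,\psi\in H_0^1(\Om;w),
\end{equation*}
and the functional $\ell(\psi)=\int_\Om f\psi\df x$.

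\textbf{Continuity.} By Cauchy--Schwarz and the Poincar\'e inequality (Assumption \ref{06.27.A1} 1)),
\begin{equation*}
|B(u,\psi)|\leq (1+C\|V\|_{L^\infty(\Om)})\|u\|_{H^1(\Om;w)}\|\psi\|_{H^1(\Om;w)}.
\end{equation*}
Similarly, $|\ell(\psi)|\leq C^{1/2}\|f\|_{L^2(\Om)}\|\psi\|_{H^1(\Om;w)}$.

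\textbf{Coercivity.} Since $V\geq 0$, we have $B(u,u)\geq \|u\|_{H^1(\Om;w)}^2$.

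Lax--Milgram therefore gives a unique $u\in H_0^1(\Om;w)$ with $B(u,\psi)=\ell(\psi)$ for all $\psi\in H_0^1(\Om;w)$, hence in particular for all $\psi\in C_0^\infty(\Om)$. Testing with $\psi=u$ gives
\begin{equation*}
\|u\|_{H^1(\Om;w)}^2\leq \|f\|_{L^2}\|u\|_{L^2}\leq C^{1/2}\|f\|_{L^2}\|u\|_{H^1(\Om;w)},
\end{equation*}
so $\|u\|_{H^1(\Om;w)}\leq C^{1/2}\|f\|_{L^2(\Om)}$.

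\textbf{Regularity $u\in H^2(\Om;w)$.} Here $\mcA u$ is understood in the sense of distributions. Since $w\in C^\infty(\Om)$ and $w\nabla u\in L^1_{loc}$ (indeed $\int|w\nabla u|\leq (\int w)^{1/2}(\int w|\nabla u|^2)^{1/2}$), $\mcA u=\Div(w\nabla u)$ is a well-defined distribution. The weak formulation with $\psi\in C_0^\infty(\Om)$ says precisely that $\mcA u=Vu-f$ in $\mathcal D'(\Om)$. The right-hand side lies in $L^2(\Om)$, so $\mcA u\in L^2(\Om)$ and
\begin{equation*}
\|\mcA u\|_{L^2}\leq \|V\|_{L^\infty}\|u\|_{L^2}+\|f\|_{L^2}\leq (1+C\|V\|_{L^\infty})\|f\|_{L^2}.
\end{equation*}
Adding this to the $H^1$ bound gives the estimate, with a constant depending only on the Poincar\'e constant (hence on $\Om$) and on $\|V\|_{L^\infty}$.

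\textbf{Uniqueness within $H_0^1(\Om;w)\cap H^2(\Om;w)$.} Suppose $u$ is a weak solution with $f=0$. The identity $B(u,\psi)=0$ holds for $\psi\in C_0^\infty(\Om)$. Both sides of the weak formulation are continuous in $\psi$ with respect to the $H_0^1(\Om;w)$ norm: the gradient term directly, and the term $\int Vu\psi$ via Poincar\'e. Since $C_0^\infty(\Om)$ is dense in $H_0^1(\Om;w)$ by definition, the identity extends to all $\psi\in H_0^1(\Om;w)$. Taking $\psi=u$ then gives $\|u\|_{H^1(\Om;w)}=0$, so $u=0$.

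\textbf{Main obstacle.} The only delicate point is interpreting $\mcA u$ for $u$ that merely lies in $H^1(\Om;w)$, and checking that the distributional identity holds. Both are handled by the local smoothness of $w$ together with $w\in L^1(\Om)$, which follows from $w\in C(\overline\Om)$. No boundary regularity is needed, because $H^2(\Om;w)$ only requires $\mcA u\in L^2(\Om)$.
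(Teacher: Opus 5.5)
Your proposal is correct and follows essentially the same route as the paper: Lax--Milgram on $H_0^1(\Omega;w)$, with continuity from the Poincar\'e inequality and coercivity from $V\geq 0$; then the energy identity from testing with $u$; then the bound on $\|\mathcal{A}u\|_{L^2(\Omega)}$ read off from $\mathcal{A}u=Vu-f$. You also spell out two points the paper leaves implicit, and handle both correctly: the distributional meaning of $\mathcal{A}u$, which is what places $u$ in $H^2(\Omega;w)$, and uniqueness by extending the weak formulation through density of $C_0^\infty(\Omega)$.
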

\begin{proof}
This is a conclusion of Lemma \ref{06.04.L2} (or Remark \ref{06.04.R1}) and the Lax-Milgram Theorem (see also
   \cite[Theorem 2.10, p. 367]{Cavalheiro}).
Multiplying $u$ on both sides of \eqref{06.04.1}, integrating on $\Om$, we have
\begin{equation*}
\int_\Om |\nabla u|^2w\df x+\int_\Om Vu^2\df x=\int_\Om fu\df x,
\end{equation*}
and by case 1) in Assumption \ref{06.27.A1}, 
\begin{equation}\label{06.27.1}
\|u\|_{H^1(\Om;w)}\leq C\|f\|{L^2(\Om)}
\end{equation}
  Finally,  
\begin{equation*}
\|\mcA u\|_{L^2(\Om)}\leq \|f\|_{L^2(\Om)}+\|V\|_{L^\infty(\Om)}\|u\|_{L^2(\Om)}\leq C\|f\|_{L^2(\Om)}
\end{equation*}
from case 1) in Assumption \ref{06.27.A1} and \eqref{06.27.1}. This completes the proof of the lemma.
\end{proof}

\section{Spectrum and Courant's nodal domain theorem}\label{S3}

Under Assumption \ref{06.27.A1}, we can determine the spectrum of the degenerate partial differential operator $\mcA$ defined by \dref{06.25.1}.
Note that $\mcA$  is a densely defined, self-adjoint operator. We define $\la\in\R$ as an {\it eigenvalue} of $\mcA$ if $\ker(\mcA+\la)\neq \{0\}$. The corresponding $\ker(\mcA+\la)$ is termed the {\it eigenspace} for $\la$, and any $u\in \ker(\mcA+\la)$ is referred to as an {\it eigenfunction} of $\mcA$ associated with the eigenvalue $\la$. We say that the partial differential operator $\mcA$ possesses {\it simple} eigenvalues if the eigenspace corresponding to the eigenvalue $\la$ is one-dimensional.
From Assumption \ref{06.27.A1}, we deduce that the partial differential operator $\mcA$ exhibits a discrete spectrum:
\begin{equation}\label{06.04.2}
0<\la_1\leq \la_2\leq \la_3\leq \cdots \ra +\iy.
\end{equation}

The following Lemma \ref{06.04.L6} is derived from   \cite[Theorem 2, Chapter 6, p. 336; and Theorems 4-7 in Appendix D,p.640-645]{Evans}. Although the proof of this lemma follows a similar approach to the referenced theorems, it includes slight modifications. For completeness, we provide a detailed, self-contained proof below. Furthermore, this lemma leads to the conclusion that $0<\la_1<\la_2$.

\begin{lemma}\label{06.04.L6}
The following results hold true: 

(i) 
\begin{equation}\label{gbz1} 
\la_1=\inf_{0\neq u\in H_0^1(\Om;w)}\f{\int_\Om (\nabla u\cdot \nabla u)w\df x}{\int_\Om u^2\df x}.
\end{equation}

(ii) Moreover, the infimum in \dref{gbz1}  is achieved by a function $u_1$, which is positive within $\Om$ and satisfies
\begin{equation*}
\begin{cases}
-\mcA u_1=\la_1 u_1, &\mbox{in }\Om,\\
u_1=0, &\mbox{on }\pt\Om.
\end{cases}
\end{equation*}

(iii) If $u\in H_0^1(\Om;w)$ is any weak solution of
\begin{equation*}
\begin{cases}
-\mcA u=\la_1u, &\mbox{in }\Om, \\
u=0, &\mbox{on }\pt\Om,
\end{cases}
\end{equation*}
then $u$ is a scalar multiple of $u_1$.
\end{lemma}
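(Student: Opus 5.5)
Following Evans, we use the direct method on the Rayleigh quotient, a truncation argument for the sign, and a Harnack-type strong maximum principle for positivity.

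\textbf{Step 1: the variational characterization.} By Assumption \ref{06.27.A1} 3) and the Poincar\'e inequality, the solution operator $K=(-\mcA)^{-1}$ from Lemma \ref{06.04.L5} (with $V=0$) is compact, self-adjoint and positive on $L^2(\Om)$. Hence $L^2(\Om)$ has an orthonormal basis of eigenfunctions $\{w_k\}$ of $-\mcA$ with eigenvalues $\la_k$ as in \eqref{06.04.2}. The functions $w_k/\sqrt{\la_k}$ are also orthonormal in $H_0^1(\Om;w)$ with the norm of Remark \ref{06.27.R1}. Writing $u=\sum d_k w_k$ gives
\[
\int_\Om|\nabla u|^2w\,\df x=\sum \la_k d_k^2\ \ge\ \la_1\sum d_k^2 .
\]
Equality is attained at $u=w_1$, which proves \eqref{gbz1}.

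\textbf{Step 2: every minimizer is an eigenfunction.} Suppose $u$ attains the infimum, normalized so that $\|u\|_{L^2}=1$. Then $\sum(\la_k-\la_1)d_k^2=0$, so $u$ lies in the span of the eigenfunctions with $\la_k=\la_1$. Hence $u$ is a weak solution of $-\mcA u=\la_1u$. Equivalently, one can differentiate the Rayleigh quotient along $u+\tau\psi$ with $\psi\in C_0^\infty(\Om)$.

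\textbf{Step 3: sign.} Let $u$ be a minimizer and split $u=u^+-u^-$. Because Assumption \ref{06.27.A1} 2) places $H_0^1(\Om;w)\subset W^{1,1}_0$, truncation is legitimate: $u^\pm\in H_0^1(\Om;w)$ with $\nabla u^+=\chi_{\{u>0\}}\nabla u$. This needs a density argument for the completion of $C_0^\infty$. Since $\nabla u^+\cdot\nabla u^-=0$ a.e.,
\[
\la_1=\frac{\int|\nabla u^+|^2w+\int|\nabla u^-|^2w}{\int (u^+)^2+\int(u^-)^2}.
\]
Each quotient is at least $\la_1$, so $|u|$ is also a minimizer, and by Step 2 it solves $-\mcA|u|=\la_1|u|\ge0$. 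Because $w\in C^\infty(\Om)$ and $w>0$ in $\Om$, the operator $\mcA$ is uniformly elliptic with smooth coefficients on every compactly contained subdomain. Interior elliptic regularity then makes $|u|$ smooth in $\Om$ wherever needed. The local Harnack inequality (or the strong maximum principle) on an exhaustion of the connected set $\Om$ by compact connected subsets then gives either $|u|>0$ everywhere in $\Om$ or $|u|\equiv0$. Thus $|u|>0$ in $\Om$. Since $u$ is continuous in $\Om$ and $\Om$ is connected, $u$ has a strict sign. This yields $u_1>0$ and proves (ii).

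\textbf{Step 4: simplicity (iii).} Let $u$ be any weak solution of $-\mcA u=\la_1 u$. Testing with $u$ shows that $u$ is either zero or a minimizer. Choose $\chi\in\R$ with $\int_\Om(u-\chi u_1)\,\df x=0$. The function $u-\chi u_1$ is again a solution, hence zero or a minimizer. If it were a minimizer, Step 3 would make it strictly signed, contradicting the zero mean. Therefore $u=\chi u_1$.

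\textbf{Main obstacle.} The delicate point is Step 3. We must justify that $u^\pm$ lie in the weighted space $H_0^1(\Om;w)$ near the degenerate set $\Ga$. The positivity argument must also use only interior uniform ellipticity, since $w$ vanishes on $\Ga$ and no boundary Hopf-type information is available. I would handle the truncation by approximating $u$ with $C_0^\infty$ functions $\phi_j$ and using that $\phi_j^+$ (mollified) converges in the weighted norm. This convergence is where the $A_2$ property of $w$ is needed, for instance through density of Lipschitz functions.
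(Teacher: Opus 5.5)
Your proposal is correct and follows the paper's proof essentially step for step. The steps match: the eigenfunction expansion for (i); the fact that minimizers of the Rayleigh quotient are exactly the $\la_1$-eigenfunctions; the truncation into $u^\pm$ followed by the strong maximum principle on subdomains where $\mcA$ is uniformly elliptic, for the sign; and the zero-mean combination $u-\chi u_1$ for simplicity. The differences are cosmetic: you apply the maximum principle to $|u|$ rather than to $u^+$ and $u^-$ separately, and you correctly restrict the sign claim to minimizers, whereas the paper's Step 4 misstates it for arbitrary nonzero $u$. Your truncation sketch in fact needs only that $w$ and $w^{-1}$ are locally bounded in $\Om$, not the $A_2$ property.
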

\begin{proof} The proof will be split into five steps. 

{\it Step 1}.  Let $u_i\ (i\in \N)$ denote the $i$-th eigenfunction of $\mcA$ corresponding to the $i$-th eigenvalue $\la_i$, and let $\{u_i\}$ form an orthonormal basis of $L^2(\Om)$ (\cite[Theorem 7, Appendix E,p. 645]{Evans}). From \eqref{01.08.1}, we have
\begin{equation}\label{06.04.3}
\int_\Om (\nabla u_k\cdot\nabla u_l)w\df x=\de_{kl}\la_k \mbox{ for } k,l\in \N,
\end{equation}
where $\de_{kl}$ is the Kronecker delta function, defined as $\de_{kl}=1$ for $k=l$ and $\de_{kl}=0$ for $k\neq l$.
We now demonstrate that $\{\la_k^{-\f{1}{2}}u_k\}_{k=1}^\iy$ forms an orthonormal basis of $H_0^1(\Om;w)$.
From \eqref{06.04.3}, it is straightforward to verify that $\{\la_k^{-\f{1}{2}}u_k\}_{k=1}^\iy$ is an orthonormal subset of $H_0^1(\Om;w)$. To prove it is a basis, assume by contradiction that there exists $0\neq u\in H_0^1(\Om;w)$ such that
\begin{equation*}
\int_\Om (\nabla u_k\cdot \nabla u)w\df x=0 \mbox{ for all }k\in\N.
\end{equation*}
Given that $\{u_k\}_{k\in\N}$ is an orthonormal basis of $L^2(\Om)$, for $u\in H_0^1(\Om;w)$, we can express
\begin{equation}\label{06.04.4}
u=\sum_{k=1}^\iy d_ku_k \mbox{ where }  d_k=(u, u_k)_{L^2(\Om)}, k\in\N.
\end{equation}
Then, by Definition  \ref{06.04.D1}, \eqref{01.08.1}, and \eqref{06.04.2}, we have
\begin{equation*}
0=(u,u_k)_{H^1(\Om;w)}=\int_\Om (\nabla u_k\cdot \nabla u)w\df x=\la_k\int_\Om u_ku\df x =\la_kd_k,
\end{equation*}
which implies $d_k=0$. This leads to $u=0$, a contradiction.

{\it Step 2: Let $u=\sum_{k=1}^\iy \mu_k\la_k^{-\f{1}{2}}u_k$ in $H_0^1(\Om;w)$.}  Combining this with \eqref{06.04.4}, we find $\mu_k=d_k\la_k^\f{1}{2}$ and
\begin{equation*}
\int_\Om (\nabla u\cdot\nabla u)w\df x=\sum_{k=1}^\iy \mu_k^2=\sum_{k=1}^\iy d_k^2\la_k\geq \la_1 \|u\|_{L^2(\Om)}^2,
\end{equation*}
which establishes (i).

{\it Step 3: Show that if $u\in H_0^1(\Om;w)$ and $\|u\|_{L^2(\Om)}=1$, then $u$ is a weak solution of
\begin{equation}\label{06.04.5}
\begin{cases}
-\mcA u=\la_1u, &\mbox{in }\Om,\\
u=0, &\mbox{on } \pt\Om,
\end{cases}
\end{equation}
if and only if
\begin{equation}\label{06.04.6}
\int_\Om (\nabla u\cdot \nabla u)w\df x=\la_1.
\end{equation}}

Clearly, \eqref{06.04.5} implies \eqref{06.04.6}.
Assume \eqref{06.04.6} holds. Write $d_k=(u,u_k)_{L^2(\Om)}$ as in \eqref{06.04.4}, then $\sum_{k=1}^\iy d_k^2=1$, and from \eqref{06.04.6} we get
\begin{equation*}
\sum_{k=1}^\iy d_k^2\la_1=\la_1=\sum_{k=1}^\iy d_k^2\la_k.
\end{equation*}
This implies $d_k=(u,u_k)=0$ for all $\la_k>\la_1$.
Since $\la_1$ has finite multiplicity, it follows that $u=\sum_{k=1}^s(u,u_k)u_k$ for some $s\in\N$, where $-\mcA u_k=\la_1u_k\ (1\leq k\leq s)$, which implies that
\begin{equation*}
-\mcA u=\sum_{k=1}^s (u,u_k)(-\mcA u_k)=\la_1u,
\end{equation*}
and hence \eqref{06.04.5} holds.

{\it Step 4: Show that if $0\neq u\in H_0^1(\Om;w)$, then
\begin{equation}\label{06.04.7}
\mbox{Either } u>0 \mbox{ in }\Om \mbox{ or } u<0 \mbox{ in }\Om. 
\end{equation}} 

Indeed, assume $\|u\|_{L^2(\Om)}=1$, and denote $u^+=\max\{u,0\}, u^-=\max\{-u,0\}$, and
\begin{equation*}
\al+\be=1,\quad \al =\int_\Om (u^+)^2\df x,\ \be=\int_\Om (u^-)^2\df x.
\end{equation*}
Note that $u^\pm\in H_0^1(\Om;w)$. Approximate $\Om$ by smooth domains $\Om_\e\ (\e>0)$ such that  $\Om\se \mcO(\Ga;2\e)\s \Om_\e\s \Om\se \mcO(\Ga;\e)$ for each $\e>0$ with $\mcO(\Ga;\e)=\{x\in\Om\colon \dist(\Ga, x)<\e\}$ and $\dist(\Ga,x)=\inf_{z\in \Ga}|z-x|$. Similar to the classical conclusion in $H_0^1(\Om_\e)=H_0^1(\Om;w)$, we have (see also Lemma 1.19 in \cite{Heinonen} at p.\! 19)
\begin{equation*}
\begin{split}
\nabla u^+=
\begin{cases}
\nabla u, &\mbox{a.e.\! on } \{u\geq 0\},\\
0, &\mbox{a.e.\! on } \{u\leq 0\},
\end{cases}\quad
\nabla u^-=
\begin{cases}
0, &\mbox{a.e.\! on } \{u\geq 0\},\\
-\nabla u, &\mbox{a.e.\! on }\{u\leq 0\},
\end{cases}
\end{split}
\end{equation*}
then $\int_\Om (\nabla u^+\cdot\nabla u^-)w\df x=0$. Hence from (i) we have
\begin{equation*}
\begin{split}
\la_1
&=\int_{\Om}(\nabla u\cdot\nabla u)w\df x=\int_\Om (\nabla u^+\cdot\nabla u^+)w\df x+\int_\Om (\nabla u^-\cdot\nabla u^-)w\df x\\
&\geq \la_1\|u^+\|_{L^2(\Om)}^2+\la_1\|u^-\|_{L^2(\Om)}^2=(\al+\be)\la_1=\la_1.
\end{split}
\end{equation*}
This implies that
\begin{equation*}
\int_\Om (\nabla u^+\cdot\nabla u^+)w\df x=\la_1\int_\Om (u^+)^2\df x,\quad \int_\Om (\nabla u^-\cdot\nabla u^-)w\df x=\la_1\int_\Om (u^-)^2\df x.
\end{equation*}
From {\it Step 3}, we have
\begin{equation*}
\begin{cases}
-\mcA u^+=\la_1u^+, &\mbox{in }\Om,\\
u^+=0, &\mbox{on }\pt \Om,
\end{cases} \mbox{ and }
\begin{cases}
-\mcA u^-=\la_1u^-, &\mbox{in }\Om,\\
u^-=0, &\mbox{on }\pt \Om
\end{cases}
\end{equation*}
in the weak sense.
We know that $u^+$ is H\"older continuous on $\Om$ from    \cite[Lemma 2.3.11, p.98]{Fabes} and $-\mcA u^+=\la_1u^+\geq 0$ in $\Om$. The function $u^+$ is therefore a supersolution. Assume there exists $\wh x_0\in\Om$ such that $u^+(\wh x_0)=0$, then there exists a smooth domain $\wh\Om\s \Om\se \mcO(\Ga;\e)$  with $\mcO(\Ga;\e)=\{x\in\Om\colon \dist(\Ga, x)<\e\}$ and $\dist(\Ga, x)=\inf_{z\in \Ga}|z-x|$, such that $x_0\in \wh\Om$. By the standard regularity of elliptic equations on $\wh\Om$ with $w\in C^\iy(\wh\Om)$, we have $u^+\in C^2(\Om)\cap C(\ol{\wh\Om})$. Note that $-\mcA$ is a uniformly elliptic equation on $\wh\Om$. By the strong maximum principle (\cite[Theorem 4, Chapter 6, p.33]{Evans}) on $\wh\Om$, we obtain $u^+=0$ on $\wh \Om$, hence $u^+=0$ on $\Om$. Hence $u^+>0$ in $\Om$ or else $u^+=0$ on $\Om$.
Similar arguments apply to $u^-$ to get \eqref{06.04.7}.
From the above, we obtain \eqref{06.04.7}.

{\it Step 5}. Finally, assume that $u_1, \wh u_1$ are two nontrivial weak solutions of \eqref{01.08.1} corresponding to $\la_1$. Assume $u_1>0$ and $\wh u_1>0$ on $\Om$ by {\it Step 4}, then $\int_\Om \wh u_1\df x\neq 0$, and hence there exists $0\neq a\in\R$ such that
\begin{equation*}
\int_\Om (u_1-a\wh u_1)\df x=0.
\end{equation*}
Note that $u_1-a\wh u_1$ is also a weak solution of \eqref{01.08.1}. From Step 4, we have $u_1=a\wh u_1$. Hence the eigenvalue $\la_1$ is simple.
\end{proof}
\begin{notation}\label{06.27.N1}
Here and in what follows, we denote $\Phi_i\ (i\in\N)$ as the $i$th orthonormal (i.e., $\|\Phi_{i}\|_{L^2(\Om)}=1$) eigenfunction of $\mcA$ corresponding to the $i$th eigenvalue $\la_i$:
\begin{equation*}
0<\la_1<\la_2\leq \la_3\leq \cdots.
\end{equation*}
And then  $\{\Phi_i\}_{i\in\N}$ forms an orthonormal basis of $L^2(\Om)$. We also use the notation $\la_i(M)$ and $\la_i(\Om)$ to distinguish the $i$th eigenvalues of $\mcA$ defined on $M$ and $\Om$, respectively. And we denote
\begin{equation*}
\la_1(M)=\inf_{0\neq u\in H_0^1(M;w)}\f{\int_\Om |\nabla u|^2w\df x}{\int_\Om u^2\df x}
\end{equation*}
for all bounded domains $M\s\R^2$ ($M$ may not be smooth).
\end{notation}
\begin{remark}\label{06.05.R1}
From Lemma \ref{06.04.L6}, we deduce that the second eigenfunction $\Phi_2$ of $\mcA$ corresponding to $\la_2$ must change sign.
\end{remark}
\begin{lemma}\label{06.28.L1}
Let $u=\sum_{i=1}^\iy u_i \Phi_i\in H_0^1(\Om;w)$ with $u_i=(u,\Phi_i)_{L^2(\Om)}$ for all $i\in\N$. We have $\nabla u=\sum_{i=1}^\iy u_i\nabla \Phi_i$, and
\begin{equation*}
u\in H^2(\Om;w)\Lra \sum_{i=1}^\iy u_i^2\la_i^2<\iy,
\end{equation*}
and
\begin{equation*}
-\mcA u=\sum_{i=1}^\iy u_i\la_i\Phi_i \mbox{ and } \|\mcA u\|_{L^2(\Om)}=\left(\sum_{i=1}^\iy u_i^2\la_i^2\right)^\f{1}{2}.
\end{equation*}
\end{lemma}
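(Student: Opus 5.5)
We would organise the proof around the orthonormal basis of $H_0^1(\Om;w)$ from Step 1 of the proof of Lemma \ref{06.04.L6}. There it is shown that $\{\la_i^{-\f12}\Phi_i\}_{i\in\N}$ is an orthonormal basis of $H_0^1(\Om;w)$, with the norm of Remark \ref{06.27.R1}. For $u\in H_0^1(\Om;w)$, the coefficient of $u$ in this basis is
\[
\int_\Om (\nabla u\cdot\nabla(\la_i^{-\f12}\Phi_i))w\,\df x=\la_i^{\f12}u_i,
\]
where we used the weak form of $-\mcA\Phi_i=\la_i\Phi_i$ tested against $u$; this test is allowed because $C_0^\infty(\Om)$ is dense in $H_0^1(\Om;w)$. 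Hence $u=\sum_i u_i\Phi_i$ converges in $H_0^1(\Om;w)$, and $\sum_i\la_iu_i^2=\|u\|^2_{H^1(\Om;w)}<\iy$.

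\emph{Gradient.} Convergence in $H_0^1(\Om;w)$ means that the partial sums $S_m=\sum_{i\le m}u_i\Phi_i$ satisfy $\int_\Om|\nabla(S_m-u)|^2w\,\df x\to0$. This says $w^{\f12}\nabla S_m\to w^{\f12}\nabla u$ in $L^2(\Om)$, i.e.\ $\nabla u=\sum_i u_i\nabla\Phi_i$ in $L^2(\Om;w)$. Since $w^{-1}\in L^1(\Om)$ for an $A_2$ weight, Cauchy--Schwarz gives $\int_\Om|\nabla(S_m-u)|\,\df x\le \|w^{-1}\|_{L^1}^{\f12}\|S_m-u\|_{H^1(\Om;w)}$. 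So the series also converges in $L^1(\Om)$, as Assumption \ref{06.27.A1} 2) asserts.

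\emph{Characterisation of $H^2(\Om;w)$.} We interpret $\mcA u$ distributionally, i.e.\ $\langle \mcA u,\psi\rangle=-\int_\Om(\nabla u\cdot\nabla\psi)w\,\df x$ for $\psi\in C_0^\infty(\Om)$.

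\begin{itemize}
\item[($\Leftarrow$)] Suppose $\sum_i u_i^2\la_i^2<\iy$, and set $f=\sum_i u_i\la_i\Phi_i\in L^2(\Om)$. For $\psi\in C_0^\infty(\Om)$, the $H^1(\Om;w)$-convergence of $S_m$ gives
\[
\int_\Om(\nabla u\cdot\nabla\psi)w\,\df x=\lim_m\sum_{i\le m}u_i\la_i\int_\Om\Phi_i\psi\,\df x=\int_\Om f\psi\,\df x.
\]
Therefore $-\mcA u=f\in L^2(\Om)$, so $u\in H^2(\Om;w)$, and $\|\mcA u\|_{L^2(\Om)}^2=\sum_i u_i^2\la_i^2$ by Parseval.
\item[($\Rightarrow$)] Suppose $u\in H^2(\Om;w)$ and put $g=-\mcA u\in L^2(\Om)$. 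We compute $(g,\Phi_i)_{L^2(\Om)}$ by testing with $\Phi_i$: by the definition of $g$ and symmetry, $(g,\Phi_i)_{L^2(\Om)}=\int_\Om(\nabla u\cdot\nabla\Phi_i)w\,\df x=\la_iu_i$. Parseval then gives $\sum_i\la_i^2u_i^2=\|g\|^2_{L^2(\Om)}<\iy$, and the expansion $g=\sum_i u_i\la_i\Phi_i$ follows.
\end{itemize}

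\emph{Main obstacle.} The delicate step is the first equality in $(\Rightarrow)$. The identity $\int_\Om g\psi\,\df x=\int_\Om(\nabla u\cdot\nabla\psi)w\,\df x$ holds for $\psi\in C_0^\infty(\Om)$, and we must extend it to $\psi=\Phi_i\in H_0^1(\Om;w)$. Choose $\psi_n\in C_0^\infty(\Om)$ with $\psi_n\to\Phi_i$ in $H_0^1(\Om;w)$, which exists by the definition of $H_0^1(\Om;w)$ as a completion. Then the right-hand side converges by Cauchy--Schwarz in $L^2(w\,\df x)$. The left-hand side converges because $g\in L^2(\Om)$ and $\psi_n\to\Phi_i$ in $L^2(\Om)$, which follows from the Poincar\'e inequality in Assumption \ref{06.27.A1} 1).

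The same density argument justifies the identity $\int_\Om(\nabla u\cdot\nabla\Phi_i)w\,\df x=\la_iu_i$ used in the first paragraph: the weak eigen-equation for $\Phi_i$ holds for $C_0^\infty(\Om)$ test functions, and passing to the limit along an approximating sequence for $u$ extends it to $u\in H_0^1(\Om;w)$.
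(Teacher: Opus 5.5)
Your proposal is correct and follows essentially the same route as the paper: the expansion of $\nabla u$ comes from the orthonormal basis $\{\la_i^{-\f12}\Phi_i\}$ of Step~1 of Lemma~\ref{06.04.L6}, and ($\Leftarrow$) is the same test-function computation. The only difference is in ($\Rightarrow$), where you compute the coefficients $(-\mcA u,\Phi_i)_{L^2(\Om)}=\la_iu_i$ directly and apply Parseval, whereas the paper bounds the partial sums by Cauchy--Schwarz on $(\mcA u,\mcA\vp_n)_{L^2(\Om)}$ and recovers the expansion from ($\Leftarrow$); both rest on the same symmetry identity, which you justify by density while the paper simply asserts it.
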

\begin{proof}
By the same argument as Step 1 in the proof of Lemma \ref{06.04.L6}, we have $\nabla u=\sum_{i=1}^\iy u_i\nabla\Phi_i$.
Let $u\in H^2(\Om;w)$. Take $\vp_n=\sum_{i=1}^n u_i\Phi_i\in H_0^1(\Om;w)\cap H^2(\Om;w)$ for each $n\in\N$, then from
\begin{equation*}
\begin{split}
(\mcA u, \mcA\vp_n)_{L^2(\Om)}
&=-\sum_{i=1}^n u_i\la_i \int_\Om (\mcA u)\Phi_i\df x=-\sum_{i=1}^n u_i\la_i\int_\Om u\mcA\Phi_i\df x\\
&=\sum_{i=1}^n u_i\la_i^2\int_\Om u\Phi_i\df x=\sum_{i=1}^n u_i^2\la_i^2
\end{split}
\end{equation*}
and $\|\mcA\vp_n\|_{L^2(\Om)}^2=\sum_{i=1}^n u_i^2\la_i^2$ we obtain
\begin{equation*}
\sum_{i=1}^nu_i^2\la_i^2\leq \|\mcA u\|_{L^2(\Om)}^2
\end{equation*}
for all $n\in\N$ by Cauchy inequality. This implies that $\sum_{i=1}^\iy u_i^2\la_i^2\leq \|\mcA u\|_{L^2(\Om)}^2<\iy$.
Let $\sum_{i=1}^\iy u_i^2\la_i^2<\iy$. For each  $\vp\in C_0^\iy(\Om)$,  we have
\begin{equation*}
\begin{split}
(-\mcA u, \vp)_{L^2(\Om)}
&=\int_\Om (\nabla u\cdot \nabla\vp)w\df x=\sum_{i=1}^\iy u_i \int_\Om (\nabla\Phi_i\cdot \nabla\vp)w\df x=\sum_{i=1}^\iy u_i\la_i\int_\Om \Phi_i\vp\df x.
\end{split}
\end{equation*}
Note that
\begin{equation*}
\int_\Om \left(\sum_{i=1}^n u_i\la_i\Phi_i\right)^2\df x=\sum_{i=1}^n u_i^2\la_i^2\leq \sum_{i=1}^\iy u_i^2\la_i^2<\iy \mbox{ for all } n\in\N,
\end{equation*}
i.e., $\sum_{i=1}^\iy u_i\la_i\Phi_i\in L^2(\Om)$, Hence
\begin{equation*}
(-\mcA u, \vp)_{L^2(\Om)}=\left(\sum_{i=1}^\iy u_i\la_i\Phi_i, \vp\right)_{L^2(\Om)}.
\end{equation*}
This implies that $\|\mcA u\|_{L^2(\Om)}\leq \sum_{i=1}^\iy u_i^2\la_i^2$ and $-\mcA u=\sum_{i=1}^\iy u_i\la_i\Phi_i$.
\end{proof}

 \subsection{Rayleigh Quotient}
For the uniformly elliptic operator, the Rayleigh quotient has been discussed in  \cite[Case 4, Section 1, Chapter VI, p.406]{Hilbert}
 or \cite{Sanayei}. For the degenerate elliptic operator, the Rayleigh quotient behaves similarly to that of the uniformly elliptic operator under normal circumstances.
\begin{theorem}\label{06.22.T2}
Let $\Phi_1,\cdots, \Phi_{i}\ (i\in\N)$ be the eigenfunctions of $\mcA$ corresponding to the eigenvalues $\la_1,\cdots, \la_{i}$. Define
\begin{equation*}
\mu=\inf_{f\in H_0^1(\Om;w)\atop f\bot \Span \{\Phi_1,\cdots, \Phi_{i}\}} \frac{\int_\Om |\nabla f|^2w\df x}{\int_\Om |f|^2\df x},
\end{equation*}
then $\mu=\la_{i+1}$, where $\Span\{\Phi_1,\cdots, \Phi_i\}$ denotes the linear span of $\Phi_1,\cdots, \Phi_i$.
\end{theorem}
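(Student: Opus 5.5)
The plan is to reuse the spectral expansion from Step 1 and Step 2 of the proof of Lemma \ref{06.04.L6}, together with the orthonormality of $\{\Phi_k\}$ in $L^2(\Om)$ (Notation \ref{06.27.N1}). By Step 1, $\{\la_k^{-\f12}\Phi_k\}_{k\in\N}$ is an orthonormal basis of $H_0^1(\Om;w)$ with respect to the norm of Remark \ref{06.27.R1}. Hence every $f\in H_0^1(\Om;w)$ can be written as $f=\sum_{k=1}^\iy d_k\Phi_k$ with $d_k=(f,\Phi_k)_{L^2(\Om)}$, and by \eqref{06.04.3} we obtain
\begin{equation*}
\int_\Om |\nabla f|^2w\df x=\sum_{k=1}^\iy d_k^2\la_k,\qquad \int_\Om f^2\df x=\sum_{k=1}^\iy d_k^2 .
\end{equation*}
This identity is the only analytic ingredient. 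Once it is in hand, the theorem reduces to a statement about weighted sequences.

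\textbf{Lower bound.} The condition $f\bot\Span\{\Phi_1,\cdots,\Phi_i\}$ in $L^2(\Om)$ means exactly $d_1=\cdots=d_i=0$. Therefore
\begin{equation*}
\int_\Om|\nabla f|^2w\df x=\sum_{k\ge i+1}d_k^2\la_k\ \ge\ \la_{i+1}\sum_{k\ge i+1}d_k^2=\la_{i+1}\int_\Om f^2\df x ,
\end{equation*}
using the monotonicity \eqref{06.04.2}. This gives $\mu\ge\la_{i+1}$.

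\textbf{Upper bound.} Take $f=\Phi_{i+1}$. It belongs to $H_0^1(\Om;w)$, it is $L^2$-orthogonal to $\Phi_1,\dots,\Phi_i$, and its Rayleigh quotient equals $\la_{i+1}$ by \eqref{06.04.3}. Hence $\mu\le\la_{i+1}$, and the infimum is in fact attained.

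The main point needing care is the justification of the series identity for the energy. One must know that the $H_0^1(\Om;w)$-expansion $f=\sum\mu_k\la_k^{-1/2}\Phi_k$ coincides with the $L^2$-expansion, i.e.\ that $\mu_k=d_k\la_k^{1/2}$. This follows by pairing with $\Phi_k$ in the weak formulation: $(f,\Phi_k)_{H^1(\Om;w)}=\la_k(f,\Phi_k)_{L^2(\Om)}$, which holds because $\Phi_k$ is a weak eigenfunction and $C_0^\iy(\Om)$ is dense in $H_0^1(\Om;w)$. Parseval's identity in $H_0^1(\Om;w)$ then yields the energy formula, as already used in Step 2.

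A minor subtlety is that repeated eigenvalues cause no trouble. Orthogonality is imposed on the specific functions $\Phi_1,\dots,\Phi_i$, not on whole eigenspaces, so $\Phi_{i+1}$ remains an admissible competitor even when $\la_{i+1}=\la_i$.
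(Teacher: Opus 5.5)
Your proof is correct, and it is more direct than the paper's. You apply the energy expansion $\int_\Om|\nabla f|^2w\df x=\sum_k d_k^2\la_k$ to \emph{every} admissible $f$. The constraint $d_1=\cdots=d_i=0$ then gives $\mu\ge\la_{i+1}$ at once, and $\Phi_{i+1}$ gives the reverse inequality.

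The paper instead runs the direct method of the calculus of variations:
\begin{itemize}
\item it takes an $L^2$-normalized minimizing sequence and extracts a weak limit $f_0$ in $H_0^1(\Om;w)$;
\item it uses the compact embedding of Assumption \ref{06.27.A1} 3) to keep $\|f_0\|_{L^2(\Om)}=1$ and the orthogonality constraints;
\item it uses weak lower semicontinuity to get $\int_\Om|\nabla f_0|^2w\df x\le\mu$;
\item only then does it apply the same expansion, \eqref{06.22.1}, to the single function $f_0$ to obtain $\int_\Om|\nabla f_0|^2w\df x\ge\la_{i+1}$.
\end{itemize}

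Both arguments need the expansion identity, so the paper's compactness step adds nothing here. Attainment of the infimum already follows from $\Phi_{i+1}$ being admissible. In your proof, compactness enters only through the existence of the $L^2$-orthonormal eigenbasis, which both proofs presuppose.

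Your explicit justification of $\mu_k=d_k\la_k^{1/2}$ is the point the paper uses without comment in \eqref{06.22.1}. You get it by extending the weak eigen-equation to test functions in $H_0^1(\Om;w)$, using density of $C_0^\iy(\Om)$. Your remark on repeated eigenvalues is also correct.
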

\begin{proof}
Let $\{f_n\}_{n\in\N}\subset H_0^1(\Om;w)$ with $f_n\bot \Span\{\Phi_1,\cdots, \Phi_i\}$ for all $n\in\N$ satisfy
\begin{equation*}
\frac{\int_\Om |\nabla f_n|^2w\df x}{\int_\Om |f_n|^2\df x}\to \mu \mbox{ as } n\to\infty.
\end{equation*}
Assume $\|f_n\|_{L^2(\Om)}=1$ for all $n\in\N$. Then $\int_\Om |\nabla f_n|^2w\df x\to \mu$ as $n\to\infty$. Consequently, there exists a subsequence of $\{f_n\}_{n\in\N}$, still denoted by itself, and $f_0\in H_0^1(\Om;w)$ such that
\begin{equation}\label{06.27.2}
f_n\rightharpoonup f_0 \mbox{ weakly in } H_0^1(\Om;w).
\end{equation}
Therefore, we have
\begin{equation*}
f_n\to f_0 \mbox{ strongly in } L^2(\Om)
\end{equation*}
from (3) in Assumption \ref{06.27.A1}. This implies that $\|f_0\|_{L^2(\Om)}=1$, and  $f_0\bot \Span\{\Phi_1,\cdots, \Phi_i\}$.  From which and \eqref{06.27.2} we obtain
\begin{equation}\label{06.22.3}
\int_\Om |\nabla f_0|^2w\df x\leq \liminf_{n\to\infty} \int_\Om |\nabla f_n|^2w\df x=\mu.
\end{equation}
We shall prove $\mu=\la_{i+1}$ in the following.
Since $f_0\in L^2(\Om)$ and $\{\Phi_i\}_{i\in\N}$ is an orthonormal basis of $L^2(\Om)$, then we have
\begin{equation*}
f_0=\sum_{k=i+1}^\infty (f_0, \Phi_k)_{L^2(\Om)}\Phi_k \mbox{ and } \sum_{k=i+1}^\infty (f_0,\Phi_k)_{L^2(\Om)}^2=1
\end{equation*}
by $f_0\bot \Span \{\Phi_1,\cdots, \Phi_i\}$. This implies that
\begin{equation}\label{06.22.1}
\int_\Om |\nabla f_0|^2w\df x=\sum_{k=i+1}^\infty (f_0,\Phi_k)_{L^2(\Om)}^2\int_\Om |\nabla \Phi_k|^2w\df x=\sum_{k=i+1}^\infty\la_i (f_0,\Phi_i)_{L^2(\Om)}^2\geq \la_{i+1}.
\end{equation}
Now, we have
\begin{equation*}
\int_\Om |\nabla \Phi_{i+1}|^2w\df x=\la_{i+1} \mbox{ and } \Phi_{i+1}\bot \Span \{\Phi_1,\cdots, \Phi_i\},
\end{equation*}
then $\int_\Om |\nabla f_0|^2w\df x\leq \la_{i+1}$ from the definition of $\mu$, and hence
\begin{equation*}
\mu=\int_\Om |\nabla f_0|^2w\df x=\la_{i+1}
\end{equation*}
by \eqref{06.22.1}. This completes the proof of the theorem.
\end{proof}

Theorem \ref{06.22.T2} is equivalent to the following Theorem \ref{06.22.T3}.

\begin{theorem}\label{06.22.T3}
Let $i\in\N$. Then, 
\begin{equation*}
\la_{i}=\min_{H\in \La_i}\max_{f\in H\setminus\{0\}}\frac{\int_\Om|\nabla f|^2 w\df x}{\int_\Om |f|^2\df x},
\end{equation*}
where $\La_i$ is the set of subspaces of $H_0^1(\Om;w)$ of dimension $i$.
\end{theorem}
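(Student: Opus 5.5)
We prove the min-max characterization by establishing two inequalities, using the expansion of $H_0^1(\Om;w)$-functions in the eigenbasis $\{\Phi_k\}_{k\in\N}$ (Step 1 of Lemma \ref{06.04.L6} and Lemma \ref{06.28.L1}). The basic identity is
\begin{equation*}
\int_\Om |\nabla f|^2 w\df x=\sum_{k=1}^\iy \la_k d_k^2,\qquad \int_\Om f^2\df x=\sum_{k=1}^\iy d_k^2,\qquad d_k=(f,\Phi_k)_{L^2(\Om)},
\end{equation*}
valid for all $f\in H_0^1(\Om;w)$. It holds because $\{\la_k^{-1/2}\Phi_k\}$ is an orthonormal basis of $H_0^1(\Om;w)$ for the norm of Remark \ref{06.27.R1}. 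This is exactly the computation used in Step 2 of Lemma \ref{06.04.L6} and in the proof of Theorem \ref{06.22.T2}.

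\emph{Upper bound.} Take $H_i=\Span\{\Phi_1,\cdots,\Phi_i\}$. This subspace has dimension $i$ because the $\Phi_k$ are orthonormal in $L^2(\Om)$, so $H_i\in\La_i$. For $0\neq f=\sum_{k=1}^i d_k\Phi_k\in H_i$, the identity gives a Rayleigh quotient equal to $\sum_{k\le i}\la_kd_k^2/\sum_{k\le i}d_k^2\le \la_i$, since the eigenvalues are ordered as in \eqref{06.04.2}. Equality holds at $f=\Phi_i$. Hence the maximum over $H_i$ is attained and equals $\la_i$, so the infimum over $\La_i$ is at most $\la_i$.

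\emph{Lower bound.} Let $H\in\La_i$ be arbitrary. The map $f\mapsto((f,\Phi_1)_{L^2},\cdots,(f,\Phi_{i-1})_{L^2})$ sends the $i$-dimensional space $H$ linearly into $\R^{i-1}$. It therefore has a nontrivial kernel, which gives some $0\neq f\in H$ with $f\bot\Span\{\Phi_1,\cdots,\Phi_{i-1}\}$. For $i=1$ there is no constraint and any $f\neq0$ works. By Theorem \ref{06.22.T2} applied with $i-1$ in place of $i$ (or Lemma \ref{06.04.L6}(i) when $i=1$), the Rayleigh quotient of $f$ is at least $\la_i$. Hence the supremum over $H\setminus\{0\}$ is at least $\la_i$.

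\emph{Max and min are attained.} On a finite-dimensional $H$ the Rayleigh quotient is continuous and homogeneous of degree zero. It therefore attains its maximum on the compact unit sphere of $H$, so "max" is legitimate. The minimum over $\La_i$ is attained at $H_i$. Together the two bounds give $\la_i=\min_{H\in\La_i}\max_{f\in H\setminus\{0\}}(\cdots)$.

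The main obstacle is justifying the spectral identity in the degenerate setting, namely that $\int_\Om|\nabla f|^2w\df x=\sum_k\la_kd_k^2$ holds for every $f\in H_0^1(\Om;w)$. The approach is to invoke Step 1 of Lemma \ref{06.04.L6}: expand $f$ in the $H_0^1(\Om;w)$-orthonormal basis $\{\la_k^{-1/2}\Phi_k\}$ and apply Parseval's identity. The coefficients are identified through the weak eigen-equation, which gives $(f,\Phi_k)_{H^1(\Om;w)}=\la_k d_k$. This identification needs the weak formulation to extend from $C_0^\iy(\Om)$ test functions to all of $H_0^1(\Om;w)$, which follows by density, since $H_0^1(\Om;w)$ is by definition the completion of $C_0^\iy(\Om)$. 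Once the identity is in hand, everything else is finite-dimensional linear algebra.
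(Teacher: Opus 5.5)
Your proof is correct. Its skeleton matches the paper's proof: two inequalities, with an identical upper bound via $H_0=\Span\{\Phi_1,\cdots,\Phi_i\}$. The lower bound, however, differs from the paper's in a way that matters.

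The paper takes $H\in\La_i$ with $H\neq H_0$ and asserts that $H$ contains a unit vector orthogonal to all of $H_0$. It then invokes Theorem~\ref{06.22.T2} to bound the quotient below by $\la_{i+1}\geq\la_i$. That assertion is false in general, because $i$ linear constraints on an $i$-dimensional space need not have a nonzero solution. For example, with $i=1$ the space $H=\Span\{\Phi_1+\Phi_2\}$ contains no nonzero vector orthogonal to $\Phi_1$. Its Rayleigh quotient is $(\la_1+\la_2)/2$, which is strictly below $\la_2$, so even the paper's intermediate bound fails there.

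You instead impose orthogonality only to $\Phi_1,\cdots,\Phi_{i-1}$. The count $i>i-1$ then guarantees a nontrivial kernel, and you apply Theorem~\ref{06.22.T2} with index $i-1$ (or Lemma~\ref{06.04.L6}(i) when $i=1$) to get exactly $\geq\la_i$. This is the standard Courant--Fischer argument, and it is what actually establishes the lower bound.

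You also supply two details the paper leaves implicit. One is that the maximum is attained on each finite-dimensional $H$. The other is the Parseval identity $\int_\Om|\nabla f|^2w\df x=\sum_k\la_kd_k^2$, obtained by extending the weak eigen-equation to test functions in $H_0^1(\Om;w)$ by density.
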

\begin{proof}
Denote
\begin{equation}\label{06.22.2}
\mu_i=\min_{H\in \La_i}\max_{f\in H\setminus\{0\}}\frac{\int_\Om|\nabla f|^2 w\df x}{\int_\Om |f|^2\df x}.
\end{equation}
We only need to show that $\la_i=\mu_i$ for all $i\in\N$.
Let $H_0=\Span\{\Phi_1,\cdots, \Phi_i\}$. Then  $H_0\in \La_i$ and
\begin{equation*}
\max_{f\in H_0\setminus\{0\}}\frac{\int_\Om |\nabla f|^2w\df x}{\int_\Om |f|^2\df x}=\la_i,
\end{equation*}
and hence $\mu_i\leq \la_i$.
Let $H\in \La_i$ and $H\neq H_0$.  Then there exists $h\in H$ such that $\|h\|_{L^2(\Om)}=1$ and $h\bot H_0$. Hence, 
\begin{equation*}
\int_\Om |\nabla h|^2w\df x\geq \la_{i+1}\geq \la_i
\end{equation*}
by Theorem \ref{06.22.T2}. This means that
\begin{equation*}
\mu_i=\max_{f\in H_0\setminus\{0\}}\frac{\int_\Om |\nabla f|^2w\df x}{\int_\Om |f|^2\df x}=\la_i.
\end{equation*}

The converse is obvious. 
This proves the theorem.
\end{proof}
\begin{remark}
The quotient $\frac{\int_\Om |\nabla f|^2w\df x}{\int_\Om |f|^2\df x}$ is also called {\it Rayleigh's quotient}. Hence, Theorems \ref{06.22.T2} and \ref{06.22.T3} associate  with  the Rayleigh quotient for the degenerate elliptic operator $\mcA$.
\end{remark}

\subsection{Courant's nodal domain theorem}
In this section, we provide a proof that Courant's nodal domain theorem remains valid in the setting of degenerate elliptic operators.

\begin{lemma}\label{06.23.L1}
Let $M\subset \Om$ be a smooth domain. Denote $\la_i(M)\ (i\in\N)$ and $\la_i(\Om)$ be the $i$-th eigenvalue of $\mcA$ with Dirichlet boundary condition on the domains $M$ and $\Om$ respectively. Then
\begin{equation*}
\la_i(\Om)\leq \la_i(M).
\end{equation*}
\end{lemma}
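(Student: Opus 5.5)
The plan is to prove the inequality by domain monotonicity through the min-max characterization of Theorem \ref{06.22.T3}. The characterization is applied on both $M$ and $\Om$; the point is that every admissible test subspace for $M$ is also admissible for $\Om$, with the same Rayleigh quotients.

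First I verify that Theorem \ref{06.22.T3} applies on $M$. The weight $w$ restricted to $M$ is still an $A_2$ weight (the $A_2$ condition is over cubes in $\R^N$ and does not see the domain). The points of Assumption \ref{06.27.A1} pass from $\Om$ to $M$ through the extension-by-zero map described below:
\begin{itemize}
\item the Poincar\'e inequality holds on $H_0^1(M;w)$;
\item the embedding into $W^{1,1}(M)$ is continuous;
\item the embedding into $L^2(M)$ is compact, being the composition of extension, the compact embedding on $\Om$, and restriction.
\end{itemize}
Hence $\mcA$ on $M$ with Dirichlet conditions has discrete spectrum $\la_1(M)\leq\la_2(M)\leq\cdots$, and the arguments of Theorems \ref{06.22.T2} and \ref{06.22.T3} carry over verbatim:
\begin{equation*}
\la_i(M)=\min_{H\in\La_i(M)}\max_{f\in H\setminus\{0\}}\frac{\int_M|\nabla f|^2w\df x}{\int_M|f|^2\df x}.
\end{equation*}

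The key step is the extension-by-zero map $E\colon H_0^1(M;w)\to H_0^1(\Om;w)$, defined by $Ef=f$ on $M$ and $Ef=0$ on $\Om\setminus M$. For $f\in C_0^\iy(M)$ the extension lies in $C_0^\iy(\Om)$ and satisfies
\begin{equation*}
\int_\Om|\nabla Ef|^2w\df x=\int_M|\nabla f|^2w\df x,\qquad \int_\Om (Ef)^2\df x=\int_M f^2\df x.
\end{equation*}
So $E$ is an isometry on the dense subspace $C_0^\iy(M)$, and it extends by completion to a linear isometry of $H_0^1(M;w)$ into $H_0^1(\Om;w)$, using the norms of Remark \ref{06.27.R1}. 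The extension still preserves both integrals and is injective. This is the step that needs the most care, since the spaces are defined as completions. Because the completion is taken in the weighted norm, one must check that the limit in $H_0^1(\Om;w)$ of $Ef_n$ is indeed the zero extension of the limit of $f_n$. This holds because $L^2$ convergence identifies the limit almost everywhere, and $L^2$ convergence follows from the Poincar\'e inequality (Assumption \ref{06.27.A1} 1)).

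Now I conclude. Let $H^*\in\La_i(M)$ attain the minimum for $\la_i(M)$; for instance, take the span of the first $i$ eigenfunctions on $M$. Then $E(H^*)$ is an $i$-dimensional subspace of $H_0^1(\Om;w)$, because $E$ is injective, and the Rayleigh quotients are preserved: for every $f\in H^*\setminus\{0\}$ the quotient of $Ef$ over $\Om$ equals the quotient of $f$ over $M$. By Theorem \ref{06.22.T3} on $\Om$,
\begin{equation*}
\la_i(\Om)\leq\max_{g\in E(H^*)\setminus\{0\}}\frac{\int_\Om|\nabla g|^2w\df x}{\int_\Om g^2\df x}=\max_{f\in H^*\setminus\{0\}}\frac{\int_M|\nabla f|^2w\df x}{\int_M f^2\df x}=\la_i(M),
\end{equation*}
which is the claim.
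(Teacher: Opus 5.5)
Your proof is correct and is essentially the paper's argument: extend the eigenfunctions of $M$ by zero, which preserves both integrals, and apply the variational characterization on $\Om$. Your check that Assumption \ref{06.27.A1} transfers to $M$ through $E$ covers a point the paper leaves implicit.

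The only real difference is that the paper carries out by hand the step you delegate to Theorem \ref{06.22.T3}. It picks a nonzero combination of $E\psi_1,\dots,E\psi_{i+1}$ orthogonal to $\Phi_1,\dots,\Phi_i$ and applies Theorem \ref{06.22.T2}. That explicit linear-algebra step is also the right way to justify the direction of min-max you invoke on $\Om$. The paper's written proof of Theorem \ref{06.22.T3} should choose $h\in H$ orthogonal to $\Span\{\Phi_1,\cdots,\Phi_{i-1}\}$, not to all of $\Span\{\Phi_1,\cdots,\Phi_{i}\}$. The result itself is true, so your argument stands.
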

\begin{proof}
Let $i\in\N\cup\{0\}$. Let $\{\psi_j\}_{j\in \N}$ be an orthonormal basis of $L^2(M)$ with $\psi_j\ (j\in \N)$ the $j$-th eigenfunction of $\mcA$ on $M$ with respect to the eigenvalue $\la_j(M)$. Extend $\psi_j$ by
\begin{equation}\label{06.23.1}
E\psi_j=
\begin{cases}
\psi_j, &\mbox{in }M, \\
0, &\mbox{on }\Om\setminus M.
\end{cases}
\end{equation}
It is obvious that $E\psi_j\in H_0^1(\Om;w)\ (j\in\N)$.
By linear algebra, there exist $a_1,\cdots, a_{i+1}$, not all zero, satisfying
\begin{equation*}
\sum_{j=1}^{i+1}a_j(E\psi_j, \Phi_l)_{L^2(\Om)}=0 \mbox{ for all } l=1,\cdots, i.
\end{equation*}
Hence, the function
\begin{equation*}
f=\sum_{j=1}^{i+1}a_jE\psi_j
\end{equation*}
is orthogonal to $\Phi_1,\cdots, \Phi_i\in L^2(\Om)$. This implies that
\begin{equation*}
\la_{i+1}(\Om)\int_\Om|f|^2\df x\leq \int_\Om |\nabla f|^2w\df x=\int_M |\nabla f|^2w\df x\leq \la_{i+1}(M)\int_M|f|^2\df x
\end{equation*}
by Theorem \ref{06.22.T2}.
This shows that $\la_{i+1}(\Om)\leq \la_{i+1}(M)$.
\end{proof}
\begin{remark}
It is worth noting that $E\psi_j\in H_0^1(\Om;w)$ in  \eqref{06.23.1} is meaningful.  Indeed, there exists a sequence $\{\psi_j^n\}_{n\in\N}\subset  C_0^\infty(M)$ such that $\psi_j^n\to \psi_j$ strongly in $H_0^1(M;w)$, then $E\psi_j^n\to E\psi_j$ strongly in $H_0^1(\Om;w)$ also for $M\subset\Om$.
\end{remark}
\begin{definition}\label{06.23.D1}
The {\it nodal domain} of a continuous function $f: \Om\to \R$ is a connected component of the set
\begin{equation*}
\{f\neq 0\}\subset \Om.
\end{equation*}
\end{definition}
\begin{remark}
From \cite{Fabes}, we know that $\Phi_i\ (i\in \N)$ is H\"older continuous on $\Om$, hence it is meaningful to consider the nodal domain associated with  $\Phi_i$. We note that the nodal domain of $f$ may not be a smooth domain in general.
\end{remark}
\begin{lemma}\label{06.23.L1}
Let $\Phi_i$ be the $i$th eigenfunction of $\mcA$ with respect to the eigenvalue $\la_i$, and let $\Om_j$ be a nodal domain of $\Phi_i$. Then $\Phi_i\in H_0^1(\Om_j)$ and
\begin{equation*}
\la_i=\la_1(\Om_j).
\end{equation*}
\end{lemma}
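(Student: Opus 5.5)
The plan is to prove the two claims in order. First we show that the restriction $\Phi_i|_{\Om_j}$ lies in $H_0^1(\Om_j;w)$; this is the space the statement means by $H_0^1(\Om_j)$, since $\la_1(\Om_j)$ is defined through $H_0^1(\Om_j;w)$ in Notation \ref{06.27.N1}. Then we show that this restriction is a minimizer of the Rayleigh quotient on $\Om_j$ with value $\la_i$. Without loss of generality $\Phi_i>0$ on $\Om_j$; otherwise replace $\Phi_i$ by $-\Phi_i$.

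\textbf{Step 1: membership in $H_0^1(\Om_j;w)$.} Set $v=\Phi_i\chi_{\Om_j}$, extended by zero outside $\Om_j$. To see that $v\in H_0^1(\Om;w)$ and that it is an $H_0^1(\Om_j;w)$-limit of compactly supported functions, we use truncations. For $\e>0$ let $v_\e=(\Phi_i-\e)^+\chi_{\Om_j}$.
\begin{itemize}
\item By the H\"older continuity of $\Phi_i$ (\cite{Fabes}), the set $\{\Phi_i>\e\}\cap\Om_j$ stays away from $\pt\Om_j\cap\Om$, where $\Phi_i=0$. Near $\pt\Om$ we use the trace-zero property from Assumption \ref{06.27.A1} 2) together with the approximation of $\Om$ by the domains $\Om\se\mcO(\Ga;\e)$, as in Step 4 of Lemma \ref{06.04.L6}.
\item Hence $v_\e$ is a truncation of an $H^1(\Om;w)$ function, localized to the single component $\Om_j$. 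The gradient formula for $u^+$ (Lemma 1.19 of \cite{Heinonen}) gives $v_\e\in H_0^1(\Om_j;w)$ with $\nabla v_\e=\nabla\Phi_i\chi_{\{\Phi_i>\e\}\cap\Om_j}$.
\item Dominated convergence gives $v_\e\to v$ in $H^1(\Om;w)$, because $\int|\nabla v_\e-\nabla v|^2w\df x=\int_{\Om_j\cap\{0<\Phi_i\le\e\}}|\nabla\Phi_i|^2w\df x\to0$. So $v\in H_0^1(\Om_j;w)$.
\end{itemize}

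\textbf{Step 2: energy identity.} We test the weak equation $-\mcA\Phi_i=\la_i\Phi_i$ against $v_\e$, after extending Definition \ref{06.04.D1} by density from $C_0^\iy$ to $H_0^1(\Om;w)$. This gives
\begin{equation*}
\int_{\Om_j}(\nabla\Phi_i\cdot\nabla v_\e)w\df x=\la_i\int_{\Om_j}\Phi_i v_\e\df x.
\end{equation*}
Letting $\e\to0$ yields $\int_{\Om_j}|\nabla v|^2w\df x=\la_i\int_{\Om_j}v^2\df x$. Therefore $\la_1(\Om_j)\le\la_i$.

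\textbf{Step 3: reverse inequality $\la_1(\Om_j)\ge\la_i$.} Inside $\Om_j$ the operator is uniformly elliptic on compact subsets, so the restriction $v$ is a positive weak solution of $-\mcA v=\la_i v$ on $\Om_j$ with zero boundary values. We adapt Lemma \ref{06.04.L6} to $\Om_j$: its first eigenfunction $\psi>0$ exists there, using the compactness of $H_0^1(\Om_j;w)\hookrightarrow L^2(\Om_j)$, which is inherited from Assumption \ref{06.27.A1} 3) via the zero extension. Now compare $\psi$ and $v$. Testing the equation for $v$ with $\psi$, and the equation for $\psi$ with $v$, gives
\begin{equation*}
(\la_i-\la_1(\Om_j))\int_{\Om_j}v\psi\df x=0.
\end{equation*}
Since $v,\psi>0$ on $\Om_j$, the integral is positive, hence $\la_1(\Om_j)=\la_i$.

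Alternatively, if $\la_1(\Om_j)<\la_i$, then $v$ is an eigenfunction of $\Om_j$ that is orthogonal to the positive ground state $\psi$. It would then have to change sign, contradicting $v>0$.

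\textbf{Main obstacle.} The delicate point is Step 1, together with the existence of the positive ground state on $\Om_j$. Both are needed because the nodal domain $\Om_j$ need not be smooth and may touch the degenerate set $\Ga$. The truncation argument handles the interior boundary. Near $\Ga$, I would rely on the fact that membership in $H_0^1(\Om;w)$ already encodes the boundary behaviour, so no regularity of $\pt\Om_j$ is needed.
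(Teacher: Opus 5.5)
Your proposal is correct. For the inequality $\lambda_1(\Omega_j)\le\lambda_i$ it is essentially the paper's argument: both test the weak equation with the truncations $(\Phi_i-\varepsilon)^+\chi_{\Omega_j}$ and let $\varepsilon\to0$. The paper restricts to regular values via Sard's theorem, but that is not needed for this half.

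For the reverse inequality the two routes genuinely differ.
\begin{itemize}
\item The paper uses Sard's theorem to make the superlevel sets $\Omega_{j,\varepsilon}=\{\Phi_i>\varepsilon\}\cap\Omega_j$ smooth. It takes the positive Dirichlet ground state $\varphi_\varepsilon$ on $\Omega_{j,\varepsilon}$ and applies Green's identity. The boundary term $\int_{\partial\Omega_{j,\varepsilon}}\varepsilon\,w\,\partial_\nu\varphi_\varepsilon\,dS\le0$ has a favourable sign, which gives $\lambda_i\le\lambda_1(\Omega_{j,\varepsilon})$. An exhaustion argument with $C_0^\infty(\Omega_j)$ test functions then gives $\lambda_1(\Omega_{j,\varepsilon})\to\lambda_1(\Omega_j)$.
\item You work on $\Omega_j$ itself. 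Once $v=\Phi_i\chi_{\Omega_j}\in H_0^1(\Omega_j;w)$ is known, you test the weak equation for $v$ against a ground state $\psi$ of $\Omega_j$, and vice versa. This gives $(\lambda_i-\lambda_1(\Omega_j))\int_{\Omega_j}v\psi\,dx=0$.
\end{itemize}

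What each route buys:
\begin{itemize}
\item Your route avoids Sard's theorem, smoothness of level sets, $H^2$ regularity up to $\partial\Omega_{j,\varepsilon}$, and the limit in the domain. It also proves the first claim $\Phi_i\in H_0^1(\Omega_j;w)$ explicitly, which the paper leaves implicit, and uses that one fact for both inequalities.
\item The cost is a minimizer on the nonsmooth set $\Omega_j$. You get it correctly from the compact embedding inherited by zero extension. You only need $\psi\ge0$ and $\psi\not\equiv0$ (replace a minimizer by $|\psi|$), not the strict positivity from adapting the strong maximum principle of Lemma 3.1.
\item The paper's route has one real advantage: its reverse inequality never needs $v\in H_0^1(\Omega_j;w)$, since it works only on smooth interior subdomains where everything is classical.
\end{itemize}

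One point in Step 1 needs tightening. The $W^{1,1}$ trace-zero property of Assumption 2) does not by itself show that $\{\Phi_i\ge\varepsilon\}$ is compactly contained in $\Omega$. That compact containment is what makes $v_\varepsilon$ compactly supported in $\Omega_j$, hence approximable by $C_0^\infty(\Omega_j)$ functions. What you actually need is continuity of $\Phi_i$ up to $\partial\Omega$ with zero boundary values, or else a capacity argument from weighted potential theory. The paper tacitly uses the same fact when it asserts $\Phi_i\in C^\infty(\overline{\Omega_{j,\varepsilon}})$, so this is not a gap relative to the paper.
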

\begin{proof}
Assume that $\Phi_i>0$ on $\Om_j$. For every  $\e>0$, set
\begin{equation*}
\Phi_i^{j}=
\begin{cases}
\Phi_i(x), &x\in \Om_j,\\
0, &x\in \Om\setminus \Om_j,
\end{cases}
\end{equation*}
and
\begin{equation*}
\Phi_{i,\e}^{j}=
\begin{cases}
\Phi_i^{j}-\e, &\mbox{on }\Om_{j,\e},\\
0, &\mbox{on }\Om\setminus \Om_{j,\e},
\end{cases},\quad \Om_{j,\e}=\left\{x\in \Om_j\colon \Phi_i^{j}(x)>\e\right\}.
\end{equation*}
By the standard elliptic regularity and $\Gamma\subset \partial\Om$ and $w\in C(\overline{\Om})\cap C^\infty(\Om)$, we have $\Phi_{i}\in C^\infty(\overline{\Om_{j,\e}})$ and $\Phi_i\in C^\infty(\Om)$. However, $\Phi_i$ may not possess classical first-order partial derivatives on $\Gamma\subset\overline{\Om}$, which implies $\Phi_i\notin C^\infty(\overline{\Om})$ in general. And by Sard's theorem \cite{Sard}, there exists regular values $\{\e_l\}_{l\in\N}$, such that $\e_l\to 0$ as $l\to\infty$ and we have  $\partial\Om_{j,\e_l}$ is smooth (i.e., the critical points are not on $\partial\Om_{j,\e_l}$) and $\Phi_{i,\e_l}^j\in H_0^1(\Om_{i,\e_l};w)=H_0^1(\Om_{i,\e_l})$. We simplify denote $(l\in\N)$
\begin{equation*}
\Om_{j,l}=\Om_{j,\e_l},\quad \Phi_{k,l}^{j}=\Phi_{k,\e_l}^j.
\end{equation*}
Then,
\begin{equation*}
\begin{split}
\la_i(\Om)\int_{\Om_{j}}\Phi_{i,l}^j\Phi_i^j\df x
&=\la_i(\Om)\int_{\Om}\Phi_{i,l}^j\Phi_i\df x=\int_\Om (\nabla \Phi_{i,l}^j\cdot\nabla \Phi_i)w\df x\\
&=\int_{\Om_{j}}|\nabla \Phi_{i,l}^j|^2w\df x\geq \la_1(\Om_j)\int_{\Om_{j}}|\Phi_{i,l}^j|^2\df x
\end{split}
\end{equation*}
by using $\Phi_{i,l}^j\in H_0^1(\Om;w)$ as a test function.  Letting $l\to\infty$, by $\Phi_{i,l}^j\to \Phi_i^j$ strongly in $L^2(\Om_j)$,  we obtain
\begin{equation*}
\begin{split}
\la_i(\Om)\int_{\Om_j}|\Phi_i^j|^2\df x\geq \la_1(\Om_j)\int_{\Om_j}|\Phi_i^j|^2\df x.
\end{split}
\end{equation*}
This implies that $\la_i(\Om)\geq \la_1(\Om_j)$.
We now proceed to prove the opposite inequality.
Let $\e>0$ and $\e$ is a regular value of $\Phi_i^j$. Choose $\vp_\e>0$ be the first eigenfunction of $\mcA$ on $\Om_{j,\e}$ with respect to the eigenvalue $\la_1(\Om_{j,\e})$. We obtain
\begin{equation*}
\begin{split}
\la_i(\Om)\int_{\Om_{j,\e}}\vp_\e \Phi_i^j\df x
&=-\int_{\Om_{j,\e}}\vp_\e\Div(w\nabla \Phi_i^j)\df x=-\int_{\Om_{i,\e}}\Div(w\nabla\vp_\e)\Phi_i^j\df x+\int_{\partial \Om_{j,\e}}\Phi_i^j \frac{\partial\vp_\e}{\partial \nu_w}\df S\\
&\leq -\int_{\Om_{i,\e}}\Div(w\nabla\vp_\e)\Phi_i^j\df x=\la_1(\Om_{j,\e})\int_{\Om_{j,\e}}\vp_\e \Phi_i^j\df x
\end{split}
\end{equation*}
since $\Om_{j,\e}$ is smooth and $\varphi_\varepsilon,\Phi_i^j\in H^2(\Om_{j,\e})$, where $\frac{\partial\vp_\e}{\partial \nu_w}=(\nabla\vp_\e\cdot\nu)w\leq 0$. This implies that
\begin{equation}\label{06.23.2}
\la_i(\Om)\leq \la_1(\Om_{j,\e})
\end{equation}
for all regular values $\e>0$ of $\Phi_i^j$, since $\vp_\e>0$ and $\Phi_i^j>0$ on $\Om_{j,\e}$.
Finally, we prove: for all $\varepsilon > 0$ such that $\varepsilon$ is a regular value of $\Phi_i^j$, we have
\begin{equation*}
\lim_{\e\to 0}\la_1(\Om_{j,\e})=\la_1(\Om_j).
\end{equation*}
And this combine \eqref{06.23.2} and Sard's theorem we obtain $\la_i(\Om)\leq \la_1(\Om_j)$.
For each $\de>0$, by the definition of  $\la_1(\Om_j)$, i.e.,
\begin{equation*}
\la_1(\Om_j)=\inf_{0\neq f\in H_0^1(\Om_j;w)}\frac{\int_\Om |\nabla f|^2w\df x}{\int_\Om |f|^2\df x},
\end{equation*}
there exists $f_\de\in C_0^\infty(\Om_j)$ such that
\begin{equation*}
\frac{\int_\Om |\nabla f_\de|^2w\df x}{\int_\Om |f_\de|^2\df x}\leq \la_1(\Om_j)+\de.
\end{equation*}
Note that there exists $\e_0>0$ such that $\supp f_\de\subset \Om_{j,\e}$ for all regular values $\varepsilon  \in (0, \varepsilon _0]$ of $\Phi_i^j$, which implies that
\begin{equation*}
\la_1(\Om_{j,\e})\leq \frac{\int_\Om |\nabla f_\de|^2w\df x}{\int_\Om |f_\de|^2\df x}.
\end{equation*}
Hence, for each $\de>0$, there exists $\e_0\geq 0$,  such that for all regular values $\varepsilon  \in (0, \varepsilon _0]$ of $\Phi_i^j$, we have
\begin{equation*}
\la_1(\Om_j)\leq \la_1(\Om_{j,\e})\leq \la_1(\Om_j)+\de.
\end{equation*}
Therefore, $\lim_{\e\to 0}\la_1(\Om_{j,\e})=\la_1(\Om_j)$.
\end{proof}
\begin{remark}
In contrast to the uniformly elliptic case, it is difficult to show that $\Phi_i \in C^\infty(\overline{\Omega})$ for degenerate elliptic equations.
\end{remark}
From Lemma \ref{06.23.L1} we obtain the following Theorem \ref{06.22.T1}. 

\begin{theorem}[Courant nodal domain theorem]\label{06.22.T1}
The eigenfunction $\Phi_i\ (i\in\N)$ has at most $i$ nodal domains.
\end{theorem}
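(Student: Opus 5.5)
We argue by contradiction, following the classical Courant argument and using the min--max characterization of Theorem \ref{06.22.T2} together with Lemma \ref{06.23.L1} and the unique continuation property. Suppose $\Phi_i$ has at least $i+1$ nodal domains $\Om_1,\dots,\Om_{i+1}$. For each $j$ let $\Phi_i^j$ be the restriction of $\Phi_i$ to $\Om_j$, extended by zero to $\Om$. By Lemma \ref{06.23.L1}, $\Phi_i^j\in H_0^1(\Om;w)$ (via the approximating functions $\Phi^j_{i,l}$, which are in $H_0^1(\Om;w)$ and converge in $H_0^1(\Om;w)$). Testing the weak equation for $\Phi_i$ against $\Phi_i^j$ gives
\begin{equation*}
\int_\Om |\nabla \Phi_i^j|^2 w\,\df x=\la_i\int_\Om |\Phi_i^j|^2\df x ,
\end{equation*}
since $\nabla\Phi_i^j=\nabla\Phi_i$ a.e.\ on $\Om_j$ and vanishes elsewhere. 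The functions $\Phi_i^1,\dots,\Phi_i^{i+1}$ have disjoint supports, so they are linearly independent and mutually orthogonal both in $L^2(\Om)$ and in the energy inner product.

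Next we choose constants $a_1,\dots,a_{i+1}$, not all zero, such that $f=\sum_{j=1}^{i+1}a_j\Phi_i^j$ is $L^2$-orthogonal to $\Phi_1,\dots,\Phi_i$; this is $i$ linear conditions on $i+1$ unknowns. By disjointness of supports,
\begin{equation*}
\int_\Om |\nabla f|^2w\,\df x=\sum_j a_j^2\la_i\|\Phi_i^j\|^2_{L^2}=\la_i\|f\|_{L^2}^2 .
\end{equation*}
Theorem \ref{06.22.T2} gives $\int_\Om|\nabla f|^2w\,\df x\ge \la_{i+1}\|f\|^2_{L^2}$, so $\la_{i+1}\le\la_i$ and hence $\la_{i+1}=\la_i$. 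Moreover, $f$ attains the infimum in Theorem \ref{06.22.T2}. Repeating Step 3 of the proof of Lemma \ref{06.04.L6} with the expansion $f=\sum_{k>i}d_k\Phi_k$ shows that $d_k=0$ unless $\la_k=\la_i$. Thus $f$ is a finite combination of eigenfunctions with eigenvalue $\la_i$, and so $f$ is a weak eigenfunction of $\mcA$ with eigenvalue $\la_i$.

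To finish, we use the fact that $a_{i+1}$ can be taken to be zero. If $\la_i$ has multiplicity causing trouble, we instead choose $f$ orthogonal to $\Phi_1,\dots,\Phi_{i-1}$ only, using $i$ of the nodal pieces, which also leaves one coefficient free. Then $f$ vanishes identically on the open set $\Om_{i+1}$ (or on some $\Om_j$ with $a_j=0$), while $f$ solves $-\Div(w\nabla f)=\la_i f$ in $\Om$. In the interior, $w\in C^\iy(\Om)$ is positive, so the operator is uniformly elliptic with smooth coefficients on every compactly contained subdomain. The unique continuation property \cite{Aronszajin}, applied in the connected set $\Om$ through a chain of such subdomains, then forces $f\equiv0$. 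But $f\ne0$ because some $a_j\neq0$ and $\Phi_i^j\not\equiv0$, a contradiction. Hence $\Phi_i$ has at most $i$ nodal domains.

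The main obstacle is the middle step: justifying that $\Phi_i^j\in H_0^1(\Om;w)$ and that the energy identity holds, given that nodal domains may be nonsmooth and may touch the degenerate part $\Ga$ of the boundary. I would take this from Lemma \ref{06.23.L1}, checking that $\Phi^j_{i,l}\to\Phi^j_i$ in $H_0^1(\Om;w)$ by monotone convergence of $\int_{\Om_{j,\e}}|\nabla\Phi_i|^2w$. A secondary concern is that unique continuation is needed only in the interior, where the equation is uniformly elliptic, so the degeneracy on $\Ga$ causes no difficulty there.
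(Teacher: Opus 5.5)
Your final argument is correct and has the same skeleton as the paper's proof. From $i+1$ nodal domains you build a nontrivial function supported in $\Om_1\cup\dots\cup\Om_i$ that is $L^2$-orthogonal to $\Phi_1,\dots,\Phi_{i-1}$ and has Rayleigh quotient $\la_i$. Theorem \ref{06.22.T2} (with $i-1$ in place of $i$) and Step 3 of Lemma \ref{06.04.L6} then make it a $\la_i$-eigenfunction, and it vanishes on $\Om_{i+1}$, which contradicts unique continuation (Remark \ref{07.16.R1}).

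Where you differ is in the building blocks.
\begin{itemize}
\item The paper takes the first Dirichlet eigenfunctions $f_j$ of $H_0^1(\Om_j;w)$, which requires a minimizer on a possibly rough $\Om_j$. As written, it relies on the identity $\la_1(\Om_j)=\la_i$ from its nodal-domain lemma. The inequality $\la_i\le\la_1(\Om_j)$ is the expensive half of that lemma: it uses Sard's theorem, integration by parts on the smooth superlevel sets $\Om_{j,\e}$, and the limit $\la_1(\Om_{j,\e})\to\la_1(\Om_j)$.
\item You use the pieces $\Phi_i^j=\Phi_i\chi_{\Om_j}$ themselves. You then need only $\Phi_i^j\in H_0^1(\Om;w)$ and the energy identity $\int_\Om|\nabla\Phi_i^j|^2w\,\df x=\la_i\int_\Om|\Phi_i^j|^2\df x$. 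Your Cauchy-sequence argument, based on $\int_{\Om_j\setminus\Om_{j,\e}}|\nabla\Phi_i|^2w\,\df x\to0$, gets both from the approximants $\Phi^j_{i,l}$, i.e.\ from the easy half of that lemma only.
\end{itemize}
Yours is the classical route and is more economical. The paper's choice gets membership in $H_0^1$ for free from the definition of $H_0^1(\Om_j;w)$, at the cost of the harder spectral identity. Both routes rest equally on $\Phi^j_{i,l}\in H_0^1(\Om;w)$.

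One step of your write-up needs correcting. In your first construction ($i+1$ pieces, orthogonal to $\Phi_1,\dots,\Phi_i$) you cannot also take $a_{i+1}=0$. That would be $i+1$ homogeneous conditions on $i+1$ unknowns, so nothing forces $f$ to vanish on a nodal domain. The conclusion $\la_{i+1}=\la_i$ is not by itself a contradiction. Your second version ($i$ pieces, orthogonal to $\Phi_1,\dots,\Phi_{i-1}$) is not a fallback for awkward multiplicities. It is the argument that works in every case, and it is exactly the paper's. If you want to keep the first construction, replace $f$ by $f-a_{i+1}\Phi_i$. This is a $\la_i$-eigenfunction, it vanishes on $\Om_{i+1}$, and it is nonzero because $f\perp\Phi_i$ and $f\neq0$.
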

\begin{proof}
Suppose that $\Phi_i\ (i\in\N)$ has at least $i+1$ nodal domains, denote these nodal domains by $\Om_1,\cdots,\Om_{i+1}$. From Lemma \ref{06.23.L1}, we know that $\la_i(\Om)=\la_1(\Om_j)$, where $\Om_j$ is a nodal domain of $\Phi_i$. For each nodal domain $\Om_j$ of $\Phi_i$, consider the first eigenfunction $f_j$ in $H_0^1(\Om_j;w)$, we know by definition of eigenfunction that
\begin{equation*}
\int_{\Om_j} |\nabla f_j|^2w\df x=\la_i(\Om)\int_{\Om_j}|f_j|^2\df x.
\end{equation*}
Consider the functions $\{f_j\}_{j=1}^{i+1}$, by linear algebra, there exist $a_1,\cdots, a_{i}$, not all zero, such that  the function
\begin{equation*}
g=\sum_{j=1}^{i} a_jf_j+0\cdot f_{i+1}
\end{equation*}
satisfies $g\bot\Phi_j, j=1,\cdots, i-1$, where $a_j, j=1,\cdots, i$ satisfy the following equations
\begin{equation*}
\sum_{j=1}^{i}a_j(f_j,\Phi_l)_{L^2(\Om)}=0 \mbox{ for all } l=1,\cdots, i-1.
\end{equation*}
Note that $g\in H_0^1(\Om;w)$, and from $C_0^\iy(\Om_j)$ is dense in $H_0^1(\Om_j;w)\ (j=1,\cdots, i+1)$,  we have
\begin{equation*}
\begin{split}
\int_{\Om}|\nabla g|^2w\df x
&=\sum_{j=1}^{i} a_j^2\int_{\Om_j}|\nabla f_j|^2w\df x=\sum_{j=1}^{i} \la_i(\Om)a_j^2\int_{\Om_j}|f_j|^2\df x=\la_i(\Om)\int_\Om |g|^2\df x,
\end{split}
\end{equation*}
hence,  by the same argument as {\it Step 3}  in the proof of Lemma \ref{06.04.L6}, we have  $g$ is an eigenfunction of $\mcA$ with respect to the eigenvalue $\la_i(\Om)$. Note that $g=0$ on $\Om_{i+1}$, it is a contradiction by the unique continuation property (see Remark \ref{07.16.R1}) of $\Phi_i$ (i.e., if $\Phi_i=0$ on a nonempty open subset $\Om_{i+1}\subset\Om$, then $\Phi_i=0$ on $\Om$). This proves the theorem.
\end{proof}

\begin{remark}\label{07.16.R1}
In the proof of Theorem \ref{06.22.T1}, we must use the unique continuation property (i.e., if $\Phi_i=0$ on a nonempty open subset $\om\subset \Om$, then $\Phi_i=0$ on $\Om$) for the eigenfunction $\Phi_i$, this is \cite[Theorem 5.2, Chapter 5.2, p. 186]{Rousseau}.
Indeed, for any $\e>0$, take a smooth domain $\Om\subset \mcO(\Ga;2\e)\subset  \Om_\e\subset \Om\subset \mcO(\Ga;\e)$ with $\mcO(\Ga;\e)=\{x\in\Om\colon \dist(\Ga, x)<\e\}$ and $\dist(\Ga,x)=\inf_{z\in \Ga}|z-x|$, then $\mcA$ is a uniformly elliptic equation on $\Om_\e$. Now, we have
\begin{equation*}
\left|(\mcA+\la)\Phi_i(x)\right|\leq 2\la_i|\Phi_i(x)| \mbox{ a.e.\! in }\Om_\e,
\end{equation*}
and $\Phi_i=0$ on $\om$, apply \cite[Theorem 5.2, Chapter 5.2]{Rousseau}, we get $\Phi_i=0$ on $\Om_\e$. Letting $\e\to 0$, we obtain $\Phi_i=0$ on $\Om$. We note that the unique continuation property as above is a qualitative, rather than a quantitative, property.
\end{remark}
From Lemma \ref{06.04.L6} and Theorem \ref{06.22.T1} we obtain the following Corollary \ref{06.05.C1}. 

\begin{corollary}\label{06.05.C1}
The eigenfunction $\Phi_2$  has exactly two nodal domains.
\end{corollary}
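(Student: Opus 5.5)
The plan is to squeeze the count of nodal domains of $\Phi_2$ between an upper bound and a lower bound. The upper bound is immediate: Theorem \ref{06.22.T1} with $i=2$ says that $\Phi_2$ has at most two nodal domains. It therefore remains to show that $\Phi_2$ has at least two nodal domains.

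For the lower bound, first note that $\Phi_2$ is H\"older continuous on $\Om$ by \cite{Fabes}. Hence the sets $\{\Phi_2>0\}$ and $\{\Phi_2<0\}$ are open, and nodal domains are meaningful. By Remark \ref{06.05.R1}, $\Phi_2$ changes sign. To make this self-contained, I would argue by orthogonality. By Lemma \ref{06.04.L6}, $\Phi_1$ can be taken strictly positive in $\Om$, and $\int_\Om \Phi_1\Phi_2\df x=0$. Since $\Phi_2\not\equiv 0$, this forces $\Phi_2$ to take both signs on sets of positive measure. Continuity then gives that $\{\Phi_2>0\}$ and $\{\Phi_2<0\}$ are nonempty open sets.

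Each of these open sets contains at least one connected component, that is, at least one nodal domain. A component of $\{\Phi_2>0\}$ is disjoint from a component of $\{\Phi_2<0\}$, so the two are distinct. This gives at least two nodal domains, and combined with the upper bound, exactly two.

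The only delicate point is the claim that $\Phi_2$ takes both signs on open sets, rather than just being nonzero almost everywhere with one sign. I would handle it as follows. Suppose $\Phi_2\ge 0$ a.e. Then $\int_\Om\Phi_1\Phi_2\df x=0$ together with $\Phi_1>0$ forces $\Phi_2=0$ a.e., contradicting $\|\Phi_2\|_{L^2(\Om)}=1$. The case $\Phi_2\le 0$ a.e. is symmetric. Once both sign sets have positive measure, continuity upgrades them to nonempty open sets, and the argument above goes through.
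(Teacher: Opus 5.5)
Your proposal is correct and follows the paper's route: the corollary is obtained from Theorem~\ref{06.22.T1} (at most two nodal domains) together with Lemma~\ref{06.04.L6}, which via Remark~\ref{06.05.R1} forces $\Phi_2$ to change sign. You spell out the sign-change step explicitly (orthogonality to the strictly positive $\Phi_1$, then continuity of $\Phi_2$ to get two nonempty disjoint open sign sets), which the paper leaves implicit.
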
 
\begin{remark}
Provided that the $A_2$ weight function $w$ is degenerate only at discrete points in the interior of $\Omega$ and Assumption \ref{06.27.A1} is satisfied, the same conclusions continue to hold. But we do not know what will happen when $w\geq 0$ a.e. in $\Om$, since Remark \ref{07.16.R1} may not hold anymore.
\end{remark}

\section{Simple  eigenfunctions}\label{S4}

In this section, we shall prove the fact that $\{\rho \in L^\infty(\Omega) \colon \mathcal{A} + \rho \text{ has simple eigenvalues}\}$
encompasses a residual subset within $(L^\infty(\Omega), |\cdot|_\infty)$. To establish this result, we will introduce an analytic perturbation for $\mcA$. As noted in what follows, if any difficulties arise, we may assume $\rho \in L^\infty(\Omega)$ with $\rho \geq 1$. The equivalent norm on $H_0^1(\Omega;w)$ is defined as
\begin{equation*}
\|u\|_{H^1(\Omega;w)}=\left(\int_\Omega |\nabla u|^2w\,dx+\int_\Omega \rho u^2\,dx\right)^{\frac{1}{2}},
\end{equation*}
and its inner product is given by
\begin{equation*}
(u,v)_{H^1(\Omega;w)}=\int_\Omega (\nabla u\cdot\nabla v)w\,dx+\int_\Omega \rho uv\,dx,
\end{equation*}
throughout this section.
\begin{definition}\label{06.24.D1}
Let $(E,\|\cdot\|_E)$ be a normed space. Let $T>0$ and let
\begin{equation*}
f: (-T,T)\rightarrow E
\end{equation*}
be a function. We say that $f$ is {\it analytic} in $E$ for $|\tau|<T$ if there exists a sequence $\{f_j\}_{j=0}^\infty\subseteq E$ such that
\begin{equation*}
\lim_{m\rightarrow\infty}\left\|f(\tau)-\sum_{j=0}^mf_j\tau^j\right\|_E=0 \quad \text{for } |\tau|<T.
\end{equation*}
We say that $f$ is {\it absolutely analytic} for $|\tau|<T$ if, in addition, the real series
\begin{equation*}
\sum_{j=0}^\infty \|f_j\|_E|\tau|^j
\end{equation*}
converges for $|\tau|<T$.
\end{definition}
If $E, F$ are Banach spaces, let $\mcL(E; F)$ denote the space of bounded linear operators from $E$ to $F$; $\mcL(E;F)$ is a Banach space with the operator norm.
The following Lemmas \ref{06.25.L1} and \ref{06.25.L2} are   \cite[Lemmas 1 and 2, Appendix II, p. 366]{Albert} or \cite[Section 2, Chapter VII, p.366]{Kato}.

\begin{lemma}\label{06.25.L1}
Suppose that in a neighborhood of $\tau=0$, $G(\tau)$ is an absolutely analytic function in $\mcL(E; F)$ and $v(\tau)$ is analytic in $E$.
 Then,  $G(\tau)v(\tau)$ is analytic in $F$ for small $\tau$.
\end{lemma}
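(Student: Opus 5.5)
We expand both factors in power series and multiply them as a Cauchy product. By hypothesis there are operators $\{G_j\}_{j\ge0}\subset\mcL(E;F)$ and a radius $T_1>0$ with $G(\tau)=\sum_{j=0}^\iy G_j\tau^j$ in operator norm and $\sum_{j}\|G_j\|_{\mcL(E;F)}|\tau|^j<\iy$ for $|\tau|<T_1$. There are also vectors $\{v_k\}_{k\ge0}\subset E$ and $T_2>0$ with $v(\tau)=\sum_k v_k\tau^k$ in $E$ for $|\tau|<T_2$. The candidate coefficients for $G(\tau)v(\tau)$ are
\begin{equation*}
h_m=\sum_{j+k=m}G_jv_k\in F,\qquad m\ge 0,
\end{equation*}
and we aim to show $\|G(\tau)v(\tau)-\sum_{m=0}^M h_m\tau^m\|_F\to0$ as $M\to\iy$ for $|\tau|<T:=\min\{T_1,T_2\}$.

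The argument is a Mertens-type theorem, where absolute convergence of only one factor suffices. Fix $\tau$ with $|\tau|<T$ and write $a_j=G_j\tau^j$, $b_k=v_k\tau^k$. Set $B_M=\sum_{k\le M}b_k$, $B=v(\tau)$ and $\beta_M=B_M-B\to0$ in $E$. A direct rearrangement of finite sums gives
\begin{equation*}
\sum_{m=0}^M h_m\tau^m=\sum_{j=0}^M a_jB_{M-j}=\Big(\sum_{j=0}^M a_j\Big)B+\sum_{j=0}^M a_j\beta_{M-j}.
\end{equation*}
The first term tends to $G(\tau)v(\tau)$ in $F$, because $\sum_{j\le M}a_j\to G(\tau)$ in $\mcL(E;F)$ and $\|(S_M-G(\tau))B\|_F\le\|S_M-G(\tau)\|\,\|B\|_E$.

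The main work is the remainder $R_M=\sum_{j=0}^M a_j\beta_{M-j}$. Let $\alpha=\sum_j\|a_j\|<\iy$, which is where absolute analyticity is used, and let $\beta^*=\sup_n\|\beta_n\|_E<\iy$. Given $\e>0$, choose $N$ so that $\|\beta_n\|<\e$ for $n\ge N$. Split $R_M$ at $j=M-N$. The terms with $j\le M-N$ have $M-j\ge N$, so their total norm is at most $\e\alpha$. The terms with $M-N<j\le M$ are bounded by $\beta^*\sum_{j>M-N}\|a_j\|$, which tends to $0$ as $M\to\iy$ because it is a tail of a convergent series. Hence $\limsup_M\|R_M\|_F\le\e\alpha$, and since $\e$ is arbitrary, $R_M\to0$.

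This proves that $G(\tau)v(\tau)$ is analytic in $F$ for $|\tau|<T$, with coefficients $h_m$. The only delicate point is this tail estimate: without absolute convergence of one factor, the Cauchy product of two merely convergent series need not converge. Here it suffices that $\sum\|G_j\||\tau|^j$ converges, and no absolute convergence of $v$ is needed.
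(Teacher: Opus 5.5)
Your proof is correct. The paper does not prove this lemma at all; it quotes it from Albert's perturbation-theory appendix (or Kato), so your Cauchy-product argument supplies the justification the paper omits. Each step checks out:
\begin{itemize}
\item the identity $\sum_{m\le M}h_m\tau^m=\sum_{j\le M}a_jB_{M-j}$;
\item the operator-norm convergence of the term $S_MB$;
\item the splitting of $R_M$ at $j=M-N$, with the tail $\beta^*\sum_{j>M-N}\|a_j\|\to0$.
\end{itemize}

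One remark on the point you single out as delicate: with the paper's definitions it is automatic. Suppose $\sum_k v_k\tau^k$ converges in $E$ for every $|\tau|<T_2$. Then for $|\tau|<r<T_2$ the terms $v_kr^k$ tend to $0$, so $\|v_k\|_E r^k\le C$. Hence $\sum_k\|v_k\|_E|\tau|^k\le C\sum_k(|\tau|/r)^k<\iy$. In other words, on an open interval "analytic" already implies "absolutely analytic" (Abel's lemma).

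Both factors therefore converge absolutely, and the double series $\sum_{j,k}G_jv_k\tau^{j+k}$ is absolutely summable in $F$. Summing it along anti-diagonals then gives the same value as summing first in $k$ and then in $j$, namely $G(\tau)v(\tau)$. The estimate $\sum_m\|h_m\|_F|\tau|^m\le\big(\sum_j\|G_j\||\tau|^j\big)\big(\sum_k\|v_k\|_E|\tau|^k\big)$ even shows that $G(\tau)v(\tau)$ is absolutely analytic in $F$.

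Your Mertens-type argument is the more robust tool: it works pointwise in $\tau$ and would still apply if the series for $v$ were known to converge only at the given $\tau$. In the present setting, though, the elementary absolute-convergence route suffices and yields a slightly stronger conclusion.
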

\begin{lemma}\label{06.25.L2}
If $G(\tau)$ is absolutely analytic in $\mcL(E; F)$ for small $\tau$ and $G(0)$ is invertible, then for $\tau$ in a small neighborhood of $0$, $G(\tau)$ is invertible and $[G(\tau)]^{-1}$ is absolutely analytic in $\mcL(F; E)$ for small $\tau$.
\end{lemma}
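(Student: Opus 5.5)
The statement to prove is Lemma \ref{06.25.L2}: if $G(\tau)$ is absolutely analytic in $\mcL(E;F)$ for small $\tau$ and $G(0)$ is invertible, then $G(\tau)$ is invertible near $0$ and $[G(\tau)]^{-1}$ is absolutely analytic in $\mcL(F;E)$. I will use a Neumann series argument.

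\textbf{Setup.} Write $G(\tau)=\sum_{j\ge0}G_j\tau^j$, with $\sum_j\|G_j\||\tau|^j<\infty$ for $|\tau|<T$. Since $E,F$ are Banach spaces and $G(0)=G_0$ is a bounded bijection, the open mapping theorem gives $G_0^{-1}\in\mcL(F;E)$. Factor
\begin{equation*}
G(\tau)=G_0\bigl(I_E+K(\tau)\bigr),\qquad K(\tau)=\sum_{j\ge1}K_j\tau^j,\quad K_j=G_0^{-1}G_j\in\mcL(E;E).
\end{equation*}
Then $\|K_j\|\le\|G_0^{-1}\|\|G_j\|$, so $K$ is absolutely analytic in $\mcL(E;E)$. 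Set $k(r)=\sum_{j\ge1}\|K_j\|r^j$. This is a power series with nonnegative coefficients, finite on $[0,T)$ and vanishing at $r=0$, hence continuous. So there is $T_1\in(0,T)$ with $k(r)\le k(T_1)<1$ for $r\le T_1$.

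\textbf{Invertibility.} For $|\tau|<T_1$ we have $\|K(\tau)\|\le k(|\tau|)<1$. Therefore $I+K(\tau)$ is invertible with
\begin{equation*}
(I+K(\tau))^{-1}=\sum_{m\ge0}(-K(\tau))^m,
\end{equation*}
and $G(\tau)^{-1}=(I+K(\tau))^{-1}G_0^{-1}$.

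\textbf{Absolute analyticity.} Expand each power $K(\tau)^m=\sum_{n\ge m}\bigl(\sum_{j_1+\dots+j_m=n}K_{j_1}\cdots K_{j_m}\bigr)\tau^n$. Regroup the double series by powers of $\tau$, defining
\begin{equation*}
H_n=\sum_{m=0}^n(-1)^m\sum_{j_1+\dots+j_m=n,\ j_i\ge1}K_{j_1}\cdots K_{j_m}G_0^{-1}.
\end{equation*}
The key estimate is majorization by the scalar series:
\begin{equation*}
\sum_n\|H_n\||\tau|^n\le\|G_0^{-1}\|\sum_m k(|\tau|)^m=\frac{\|G_0^{-1}\|}{1-k(|\tau|)}<\infty.
\end{equation*}
This holds because the norm of each multi-indexed term is bounded by the corresponding product of coefficients of $k$.

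\textbf{Main obstacle.} The delicate step is justifying the rearrangement of the double series $\sum_m\sum_n$ into $\sum_n H_n\tau^n$ in the Banach space $\mcL(F;E)$. This is handled by absolute convergence: the triple-indexed family $\{K_{j_1}\cdots K_{j_m}G_0^{-1}\tau^{j_1+\dots+j_m}\}$ is absolutely summable in norm by the estimate above. In a Banach space, an absolutely summable family may be summed in any order. Hence $\sum_n H_n\tau^n$ converges in operator norm to $\sum_m(-K(\tau))^mG_0^{-1}=G(\tau)^{-1}$. Together with the norm bound, this gives both analyticity and absolute analyticity for $|\tau|<T_1$.
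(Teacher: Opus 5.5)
Your Neumann-series argument is correct and complete: the factorization $G(\tau)=G(0)\bigl(I+G(0)^{-1}(G(\tau)-G(0))\bigr)$ and the bound $\|K(\tau)\|\le k(|\tau|)<1$ give invertibility. Majorizing by the scalar series $\sum_m k(|\tau|)^m$ and regrouping the absolutely summable family by powers of $\tau$ then give absolute analyticity. The paper does not prove this lemma itself; it only cites Albert (Appendix II) and Kato (Chapter VII), and your proof is the standard argument those citations rely on.
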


The following Lemma \ref{06.25.L3} is abstract theorems of \cite[Appendix II, p.92-93]{Albert}. It is merely a conclusion in functional analysis.
\begin{lemma}\label{06.25.L3}
(Hypotheses): Let $E$ be a real Hilbert space with inner product $\langle\cdot, \cdot\rangle$. Suppose that  $P$ is a symmetric, densely defined operator on $E$ and assume $\lambda$ is an eigenvalue of $P$ with multiplicity $h$. Assume there is a bounded linear operator $Q: E\rightarrow E$ such that $Q\pi_\lambda=0$ and
\begin{equation*}
Q(P+\lambda\, \text{id})=\text{id}-\pi_\lambda,
\end{equation*}
where $\pi_\lambda: E\rightarrow \ker(P+\lambda)$ is the orthogonal projection. Let  $R(\tau)$ be an absolutely analytic function in $\mcL(E; F)$ for small $\tau$, self-adjoint for every  $\tau$ and such that $R(0)=0$. Let $P(\tau)=P+R(\tau)$. Then, there exist $h$ functions $\lambda^{(1)}(\tau),\cdots, \lambda^{(h)}(\tau)$ analytic in $\R$ for small $\tau$ and $h$ functions $u^{(1)}(\tau),\cdots, u^{(h)}(\tau)$ analytic in $E$ for small $\tau$, such that for $j=1,\cdots, h$, 

\begin{itemize} 

\item  $\lambda^{(j)}(0)=\lambda_j$;

\item  For every  $\tau$, $u^{(j)}(\tau)$ is an eigenvector of $P(\tau)$ with eigenvalue $\lambda^{(j)}(\tau)$;

\item For every $\tau$, $\{u^{(1)}(\tau), \cdots, u^{(h)}(\tau)\}$ is an orthonormal set in $E$.
\end{itemize} 
\end{lemma}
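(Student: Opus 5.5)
Our plan is the standard Rellich--Kato reduction to a finite-dimensional analytic eigenvalue problem, carried out with the resolvent-type operator $Q$ and Lemmas \ref{06.25.L1}--\ref{06.25.L2}.

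Let $\{e_1,\dots,e_h\}$ be an orthonormal basis of $\ker(P+\la)$. We look for eigenpairs of $P(\tau)$ of the form
\begin{equation*}
u(\tau)=v+w(\tau),\qquad v\in\ker(P+\la),\quad w(\tau)\perp\ker(P+\la),\qquad \mu(\tau)=\la+\eta(\tau).
\end{equation*}
The equation $(P(\tau)+\mu(\tau))u=0$ then reads
\begin{equation*}
(P+\la)w=-(R(\tau)+\eta)(v+w).
\end{equation*}

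\textbf{Step 1: the range component.} Applying $Q$ and using $Q(P+\la)=\mathrm{id}-\pi_\la$, $Q\pi_\la=0$ and $\pi_\la w=0$, we obtain
\begin{equation*}
\bigl(\mathrm{id}+Q(R(\tau)+\eta)\bigr)w=-Q(R(\tau)+\eta)v.
\end{equation*}
Since $R(0)=0$, the operator $G(\tau,\eta)=\mathrm{id}+Q(R(\tau)+\eta)$ equals $\mathrm{id}$ at $(0,0)$. It is absolutely analytic, because $Q$ is bounded and $R$ is absolutely analytic. By Lemma \ref{06.25.L2}, used in the two-variable form (or by a Neumann series), $G^{-1}$ exists and is absolutely analytic for small $(\tau,\eta)$. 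Hence $w=w(\tau,\eta)v$ depends analytically and linearly on $v$, by Lemma \ref{06.25.L1}.

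\textbf{Step 2: the kernel component.} Projecting the equation with $\pi_\la$, and using that $P$ is symmetric so $\pi_\la(P+\la)w=0$ on its domain, gives
\begin{equation*}
\pi_\la(R(\tau)+\eta)(v+w(\tau,\eta)v)=0 .
\end{equation*}
In the basis $\{e_j\}$ this is an $h\times h$ system $(\eta I+B(\tau,\eta))c=0$, where $B$ is analytic and $B(0,\eta)=0$. The key point is to rewrite it in reduced form. Set $T(\tau)=\pi_\la(R(\tau)+\eta)(\mathrm{id}-G^{-1}Q(R+\eta))\pi_\la$, which simplifies to $\pi_\la R(\tau)G^{-1}\pi_\la+\eta\,\pi_\la$ because $\pi_\la Q=0$ by symmetry. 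This shows that $\eta$ must be an eigenvalue of $-\pi_\la R(\tau)(\mathrm{id}+QR(\tau)+\eta Q)^{-1}\pi_\la$.

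\textbf{Step 3: from the implicit equation to analytic branches.} We expect this to be the main obstacle. Self-adjointness is what lets us choose analytic eigenvalues despite possible crossings. The cleanest route is the Kato--Riesz projection. The total projection $\Pi(\tau)=-\frac{1}{2\pi i}\oint_{|z+\la|=r}(P(\tau)-z)^{-1}\,dz$, on a small circle isolating $-\la$ and built from the same Neumann series, is analytic. Via Kato's transformation function $U(\tau)$, it conjugates $P(\tau)\Pi(\tau)$ to an $h\times h$ analytic symmetric matrix family $M(\tau)$ on the fixed space $\ker(P+\la)$. Rellich's theorem, that an analytic real-symmetric matrix family has analytic eigenvalues and an analytic orthonormal eigenbasis in one real variable, supplies $\la^{(j)}(\tau)$ and eigenvectors $c^{(j)}(\tau)$. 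Setting $u^{(j)}(\tau)=U(\tau)c^{(j)}(\tau)$ then gives analytic eigenvectors, which remain orthonormal because $U(\tau)$ is unitary, and $\la^{(j)}(0)=\la$.

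If the contour/Kato machinery is unavailable in this functional setting, we fall back on applying Rellich's one-variable theorem directly. We would apply it to the symmetrized reduced matrix obtained in Step 2 after an analytic Gram--Schmidt step, which uses Lemmas \ref{06.25.L1}--\ref{06.25.L2} to keep everything analytic.
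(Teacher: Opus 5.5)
Your Step 3 gives a correct proof, but it is not the paper's route. The paper gives no proof of this lemma: it quotes it from Albert's Appendix~II. The tools the paper assembles around it (the bounded pseudo-inverse $Q$ of Corollary~\ref{06.25.C1} and Remark~\ref{06.25.R1}, the product and inverse Lemmas~\ref{06.25.L1}--\ref{06.25.L2}, and the single-branch Lemma~\ref{06.25.L4}) point to a $Q$-based power-series reduction, which is what your Steps~1--2 begin.

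Your proof actually rests on something else: Kato's total projection $\Pi(\tau)$, the transformation function $U(\tau)$, and Rellich's theorem for analytic real-symmetric $h\times h$ families. What this buys is a reduction that is \emph{linear} in the spectral parameter. The whole difficulty with eigenvalue crossings is then delegated to a standard finite-dimensional theorem (Kato's book, Ch.~II, \S6). A $Q$-based route is more elementary (no complexification, no contour integrals), but it must cope with exactly what you hit in Step~2. There the reduced system $\bigl(\eta+\pi_\la R(\tau)G(\tau,\eta)^{-1}\pi_\la\bigr)c=0$ is nonlinear in the unknown $\eta$.

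Symmetry is not the obstacle. When $Q$ is symmetric, the identity $(R+\eta)G^{-1}=(\mathrm{id}+(R+\eta)Q)^{-1}(R+\eta)$ shows the reduced matrix is already symmetric for real $\eta$. So Steps~1--2 do no work in your final argument, and the fallback does not close as described. Rellich's theorem concerns a one-parameter family $M(\tau)$, and no Gram--Schmidt step removes the $\eta$-dependence. A contour-free version should solve for the invariant subspace rather than for individual eigenpairs:
\begin{itemize}
\item use $Q$ and a contraction argument to find an analytic $K(\tau)\colon\ker(P+\la)\to\ker(P+\la)^\perp$ with $K(0)=0$ whose graph is $P(\tau)$-invariant (a quadratic fixed-point equation);
\item then apply Rellich to $P(\tau)$ restricted to that graph, written in an analytically orthonormalized basis.
\end{itemize}

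One hypothesis must be made explicit; it is also what makes the statement true. Both the converse of Steps~1--2 (a solution of the reduced system yields an eigenvector) and your construction of $(P(\tau)-z)^{-1}$ on the circle need $Q$ to be a two-sided reduced resolvent. That is, $QE\subseteq D(P)$ and $(P+\la)Q=\mathrm{id}-\pi_\la$ on $E$. Given this:
\begin{itemize}
\item $Q=Q(P+\la)Q=(\mathrm{id}-\pi_\la)Q$ gives $\pi_\la Q=0$; this does not follow from symmetry of $P$ alone, as you claim in Step~2.
\item $(P+\la+\zeta)^{-1}=\zeta^{-1}\pi_\la+Q(\mathrm{id}+\zeta Q)^{-1}$ for $0<|\zeta|<\|Q\|^{-1}$, so $-\la$ is isolated and $P$, hence $P(\tau)$, is self-adjoint.
\item Self-adjointness is what makes $\Pi(\tau)$ orthogonal and $U(\tau)$ unitary.
\end{itemize}

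Under the literal hypotheses (symmetric $P$, left identity only) the lemma can fail. Restrict $\diag(1,2,3,\dots)$ on $\ell^2$ to finitely supported sequences and take $Qe_1=0$, $Qe_n=e_n/(n-1)$. Let $R(\tau)=\tau S$ with $Sx=\langle x,g\rangle e_1+\langle x,e_1\rangle g$ and $g=\sum_{n\ge2}2^{-n}e_n$. For small $\tau\neq0$, the equation $(P+\tau S)x=\mu x$ with $\mu$ near $1$ has no nonzero finitely supported solution.

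In the paper's application $Q_\rho$ is defined spectrally and maps $L^2(\Om)$ into $H_0^1(\Om;w)\cap H^2(\Om;w)$, so the two-sided identity holds and nothing is lost. Also read $F=E$ and $\la^{(j)}(0)=\la$, and complexify $E$ for the contour integral (all objects are real for real $\tau$).
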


For the degenerate partial differential operator $\mcA$, we can obtain the following Corollary \ref{06.25.C1}.

\begin{corollary}\label{06.25.C1}
Let $E=L^2(\Omega), P=\mcA$ in Lemma \ref{06.25.L3}. Then the degenerate partial differential operator
\begin{equation*}
\mcA: H_0^1(\Omega;w)\cap H^2(\Omega;w)\rightarrow L^2(\Omega)
\end{equation*}
defined in \eqref{06.25.1} satisfies the hypotheses in Lemma \ref{06.25.L3}.
\end{corollary}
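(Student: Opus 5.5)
The plan is to verify the hypotheses of Lemma \ref{06.25.L3} one at a time, with $E=L^2(\Om)$ and $P=\mcA$ defined on $D(\mcA)=H_0^1(\Om;w)\cap H^2(\Om;w)$, where $\la=\la_k$ is any eigenvalue of multiplicity $h$. The central tool will be the spectral representation of Lemma \ref{06.28.L1}: every $u\in D(\mcA)$ can be written $u=\sum_i u_i\Phi_i$ with $\sum_i u_i^2\la_i^2<\iy$ and $-\mcA u=\sum_i u_i\la_i\Phi_i$. Here $\{\Phi_i\}$ is the orthonormal basis of $L^2(\Om)$ from Notation \ref{06.27.N1}.

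First, I will show that $\mcA$ is densely defined and symmetric. Density follows because $C_0^\iy(\Om)\subset D(\mcA)$ (since $w\in C^\iy(\Om)$, $\mcA\psi\in L^2$ for compactly supported smooth $\psi$), or alternatively because all finite combinations of the $\Phi_i$ lie in $D(\mcA)$ and are dense in $L^2(\Om)$. For symmetry, I will use Lemma \ref{06.28.L1}, which gives $(\mcA u,v)_{L^2}=-\sum_i u_iv_i\la_i=(u,\mcA v)_{L^2}$ for $u,v\in D(\mcA)$. Equivalently, one can apply the weak formulation of Definition \ref{06.04.D1} and pass to the limit with the density of $C_0^\iy(\Om)$ in $H_0^1(\Om;w)$.

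Next, I will identify the eigenspace and the projection. With the paper's sign convention, $\ker(\mcA+\la)$ consists of the $u$ with $-\mcA u=\la u$. By Lemma \ref{06.28.L1}, this kernel is exactly $\Span\{\Phi_i: \la_i=\la\}$, which is finite dimensional with dimension $h$ by the discreteness in \eqref{06.04.2}. The projection is therefore $\pi_\la u=\sum_{\la_i=\la}u_i\Phi_i$.

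Finally, I will construct the partial inverse $Q$ explicitly on the spectral side. In the coordinates of the $\Phi_i$, the operator $\mcA+\la$ acts diagonally as multiplication by $\la-\la_i$. I therefore define
\begin{equation*}
Qf=\sum_{\la_i\neq\la}\frac{f_i}{\la-\la_i}\Phi_i,\qquad f_i=(f,\Phi_i)_{L^2(\Om)}.
\end{equation*}
Since $\la_i\to\iy$ and only finitely many $\la_i$ equal $\la$, we have $\delta:=\inf_{\la_i\neq\la}|\la-\la_i|>0$. This gives $\|Qf\|_{L^2}\le\delta^{-1}\|f\|_{L^2}$, so $Q$ is bounded. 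The identity $Q\pi_\la=0$ holds by construction. For $u\in D(\mcA)$, the coefficients of $(\mcA+\la)u$ are $(\la-\la_i)u_i$, so $Q(\mcA+\la)u=\sum_{\la_i\neq\la}u_i\Phi_i=u-\pi_\la u$, which is the required identity. As a bonus, $Q$ maps $L^2$ into $D(\mcA)$, since $\sum_i\la_i^2f_i^2/(\la-\la_i)^2<\iy$ by the same gap argument together with Lemma \ref{06.28.L1}. For the perturbation $R(\tau)$, the natural choice is multiplication by $\tau\rho$ with $\rho\in L^\iy(\Om)$, which is trivially absolutely analytic, self-adjoint on $L^2(\Om)$, and vanishes at $\tau=0$.

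I expect the main obstacle to be the rigorous use of Lemma \ref{06.28.L1}, in particular the claim that $u\in D(\mcA)$ implies $-\mcA u=\sum u_i\la_i\Phi_i$ with the pairing $(\mcA u,\Phi_i)=(u,\mcA\Phi_i)$. This is where the degenerate boundary behaviour on $\Ga$ enters. My first approach would be to justify the pairing through the weak formulation: test against approximations of $\Phi_i$ from $C_0^\iy(\Om)$ in $H_0^1(\Om;w)$, and use that $\mcA u\in L^2$ so that $\int(\mcA u)\psi=-\int(\nabla u\cdot\nabla\psi)w$ passes to the limit.
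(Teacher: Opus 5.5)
Your proposal is correct and follows essentially the same route as the paper. You use the spectral expansion of Lemma~\ref{06.28.L1} to identify $\ker(\mcA+\lambda)=\Span\{\Phi_i:\lambda_i=\lambda\}$, define $\pi_\lambda$ and $Qf=\sum_{\lambda_i\neq\lambda}\frac{f_i}{\lambda-\lambda_i}\Phi_i$ (identical to \eqref{06.25.4}), and check boundedness, $Q\pi_\lambda=0$ and $Q(\mcA+\lambda)=\mathrm{id}-\pi_\lambda$ coefficientwise, exactly as the paper does, while additionally supplying density, symmetry, and a weak-formulation justification of the pairing $(\mcA u,\Phi_i)_{L^2}=(u,\mcA\Phi_i)_{L^2}$ that the paper takes for granted.
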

\begin{proof}
Let $u\in H_0^1(\Omega;w)\cap H^2(\Omega;w)$ satisfy $\pi_\lambda u=0$. Then $u=\sum_{i=1}^\infty u_i\Phi_i$ with $u_i\in \R, i\in\N$, and from Lemma \ref{06.28.L1} we have
\begin{equation*}
0=(\mcA+\lambda)u=-\sum_{i=1}^\infty u_i\lambda_i\Phi_i+\lambda\sum_{i=1}^\infty u_i\Phi_i=-\sum_{\lambda_i\neq \lambda}(\lambda_i-\lambda)u_i\Phi_i.
\end{equation*}
This implies that $u_i=0$ for all $\lambda_i\neq \lambda$. Hence
\begin{equation*}
\pi_\lambda (u)=\sum_{\lambda_i=\lambda} u_i\Phi_i.
\end{equation*}
From above, for every  $f\in L^2(\Omega)$, since $f=\sum_{i=1}^\infty f_i\Phi_i$, we define
\begin{equation}\label{06.25.2}
\pi_\lambda (f)=\sum_{\lambda_i=\lambda} f_i\Phi_i.
\end{equation}
It is clear that $\pi_\lambda (f)\in \ker(\mcA+\lambda)$ since the multiplicity of the eigenspace of $\mcA$ with eigenvalue $\lambda$ is finite.
Next, for every  $f\in L^2(\Omega)$, we have $f=\sum_{i=1}^\infty f_i\Phi_i$ with $f_i\in \R, i\in \N$. Define
\begin{equation}\label{06.25.4}
Q(f)=Q\left(\sum_{i=1}^\infty f_i\Phi_i\right)=-\sum_{\lambda_i\neq \lambda} \frac{1}{\lambda_i-\lambda}f_i\Phi_i.
\end{equation}
Then from \eqref{06.25.2} we have
\begin{equation*}
Q\pi_\lambda (f)=0,
\end{equation*}
and
\begin{equation*}
\begin{split}
\|Q(f)\|_{L^2(\Omega)}^2
&=\sum_{\lambda_i\neq\lambda}\frac{1}{(\lambda_i-\lambda)^2}f_i^2\leq \frac{1}{\min_{\lambda_i\neq \lambda}|\lambda_i-\lambda|^2}\sum_{i=1}^\infty f_i^2=\frac{1}{\min_{\lambda_i\neq \lambda}|\lambda_i-\lambda|^2}\|f\|_{L^2(\Omega)}.
\end{split}
\end{equation*}
This implies that $Q: L^2(\Omega)\rightarrow L^2(\Omega)$ is a bounded linear operator.
Finally, for every $u\in H_0^1(\Omega;w)\cap H^2(\Omega;w)$, we have $u=\sum_{i=1}^\infty u_i\Phi_i$ with $u_i\in \R, i\in\N$, and
by \eqref{06.25.2} and \eqref{06.25.4} and Lemma \ref{06.28.L1}, 
\begin{equation*}
\begin{split}
Q(\mcA+\lambda\,\text{id})u
&=Q\left(-\sum_{i=1}^\infty \lambda_iu_i\Phi_i+\lambda \sum_{i=1}^\infty u_i\Phi_i\right)\\
&=-Q\left( \sum_{\lambda_i\neq\lambda}(\lambda_i-\lambda)u_i\Phi_i\right)
=\sum_{\lambda_i\neq\lambda}u_i\Phi_i=\left(\text{id}-\pi_\lambda\right)u. 
\end{split}
\end{equation*}
This complete the proof of the corollary. 
\end{proof}

\begin{remark}\label{06.25.R1}
Let $\rho\in L^\infty(\Omega)$.  Consider the eigenvalue problem
\begin{equation}\label{06.25.3}
\mcA_\rho u:=\mcA u+\rho u=-\lambda u \quad \text{for } u\in H_0^1(\Omega;w)\cap H^2(\Omega;w).
\end{equation}
From the classical Fredholm alternative law, we have:
either
\begin{equation*}
\begin{cases}
\mcA_\rho u+\lambda u=0, &\text{in }\Omega, \\
u=0, &\text{on }\partial\Omega
\end{cases}
\end{equation*}
has nonzero solutions and the multiplicity of $\lambda$ is finite, or
\begin{equation*}
\begin{cases}
\mcA_\rho u+\lambda u=h, &\text{in }\Omega,\\
u=0, &\text{on }\partial\Omega
\end{cases}
\end{equation*}
has a unique solution for $h\in L^2(\Omega)$ and $\lambda\in\R$ is not an eigenvalue of $\mcA_\rho$.
Replace $\mcA$ by $\mcA_\rho$, we can assume $\rho\geq 1$. The reason is as follows: if not, we replace $\mcA_\rho$ by $\mcA_\rho-\gamma$ with $\gamma=|\rho|_\iy+1$, then $(\mcA_\rho-\gamma)u+(\lambda+\gamma)u=0$, which implies that $\lambda+\gamma$ is the eigenvalue of $\mcA_\rho-\gamma$ and $u$ is the eigenfunction of $\mcA_\rho-\gamma$ with respect to $\lambda+\gamma>0$. Define $\pi_\lambda$ as \eqref{06.25.2}, i.e.,
\begin{equation}\label{06.25.6}
\pi_\lambda: L^2(\Omega)\rightarrow \ker(\mcA_\rho+\lambda), \ f=\sum_{i=1}^\infty f_i\Phi_i\mapsto \sum_{\lambda_i=\lambda} f_i\Phi_i,
\end{equation}
and define $Q_\rho$ as \eqref{06.25.4}, i.e.,
\begin{equation}\label{06.25.7}
Q_\rho(f)=-\sum_{\lambda_i\neq\lambda} \frac{1}{\lambda_i-\lambda} f_i\Phi_i \quad \text{for } f=\sum_{i=1}^\infty f_i\Phi_i,
\end{equation}
then $Q_\rho$ satisfies $Q_\rho\pi_\lambda=0$ is a bounded linear operator $Q_\rho: E\rightarrow E$ and
\begin{equation}\label{06.25.5}
Q_\rho(\mcA_\rho+\lambda\,\text{id})=\text{id}-\pi_\lambda.
\end{equation}
We call $Q_\rho$ the pseudo-differential inverse of $\mcA_\rho+\lambda$. Moreover, since $\{\lambda_i^{-\frac{1}{2}}(\nabla\Phi_i+\rho^\frac{1}{2}\Phi_i)\}_{i\in\N}$ is an orthonormal basis of $H_0^1(\Omega;w)$ with equivalent norm
\begin{equation*}
\|u\|_{H^1(\Omega;w)}^2=\int_\Omega |\nabla u|^2w\,dx+\int_\Omega \rho u^2\,dx
\end{equation*}
(see Step 1 in the proof of Lemma \ref{06.04.L6}),  we have
\begin{equation*}
\begin{split}
\|Q_\rho(f)\|_{H^1(\Omega;w)}^2
&=\sum_{\lambda_i\neq\lambda}\frac{1}{(\lambda_i-\lambda)^2} f_i^2\left(\int_\Omega |\nabla \Phi_i|^2w\,dx+\int_\Omega \rho\Phi_i^2\,dx\right)\\
&=\sum_{\lambda_i\neq \lambda}\frac{\lambda_i}{(\lambda_i-\lambda)^2}f_i^2\leq \max_{\lambda_i\neq\lambda}\frac{\lambda_i}{(\lambda_i-\lambda)^2}\|f\|_{L^2(\Omega)}^2,
\end{split}
\end{equation*}
and from Lemma \ref{06.28.L1} we have
\begin{equation*}
\begin{split}
\|Q_\rho(f)\|_{H^2(\Omega;w)}^2&=\sum_{\lambda_i\neq \lambda}\frac{\lambda_i^2}{(\lambda_i-\lambda)^2}f_i^2+\sum_{\lambda_i\neq\lambda} \frac{\lambda_i}{(\lambda_i-\lambda)^2}f_i^2\\
&\leq \left[\max_{\lambda_i\neq\lambda}\frac{\lambda_i^2}{(\lambda_i-\lambda)^2}+\max_{\lambda_i\neq\lambda}\frac{\lambda_i}{(\lambda_i-\lambda)^2}\right]\|f\|_{L^2(\Omega)}^2,
\end{split}
\end{equation*}
and hence
\begin{equation}\label{06.26.1}
Q_\rho: L^2(\Omega)\rightarrow H_0^1(\Omega;w)\cap H^2(\Omega;w)
\end{equation}
is bounded linear operator from \eqref{06.04.2} (i.e., $\lambda_k\rightarrow \infty$ as $k\rightarrow\infty$).
\end{remark}

The following Lemma \ref{06.25.L4}  is the \cite[Proposition, Appendix II, p.94]{Albert}.

\begin{lemma}\label{06.25.L4}
Under the conditions of  Lemma \ref{06.25.L3}, there exists an analytic function $\lambda(\tau)$ in $\R$ for small $\tau$ and an analytic function $u(\tau)$ in $E$ for small $\tau$ such that

(i) $\lambda(0)=\lambda$;

(ii) $u(\tau)$ is an eigenvector of $P(\tau)$ with eigenvalue $\lambda(\tau)$.
\end{lemma}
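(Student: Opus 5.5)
The plan is a Lyapunov--Schmidt (Kato--Rellich type) reduction onto the eigenspace, with analyticity supplied by Lemmas \ref{06.25.L1} and \ref{06.25.L2}; Lemma \ref{06.25.L3} itself is not used.

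\textbf{Reduction to the eigenspace.} Fix the eigenvalue $\lambda$ of $P$ with eigenspace $K=\ker(P+\lambda)$, $\dim K=h<\infty$, and write $\pi=\pi_\lambda$. We look for $u(\tau)=u_0+v(\tau)$ with $u_0\in K$ and $v(\tau)\in(\mathrm{id}-\pi)E$, and for $\lambda(\tau)=\lambda+\mu(\tau)$ with $\mu(0)=0$. The eigen-equation
\begin{equation*}
(P+R(\tau)+\lambda(\tau))u(\tau)=0
\end{equation*}
is split by applying $\mathrm{id}-\pi$ and $\pi$.

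\textbf{The complementary component.} Apply $Q$ to the equation and use $Q(P+\lambda)=\mathrm{id}-\pi$ together with $Q\pi=0$. This gives
\begin{equation*}
v+Q\bigl(R(\tau)+\mu\bigr)(u_0+v)=0,
\end{equation*}
that is,
\begin{equation*}
\bigl[\mathrm{id}+Q(R(\tau)+\mu)\bigr]v=-Q\bigl(R(\tau)+\mu\bigr)u_0 .
\end{equation*}
The operator $G(\tau,\mu)=\mathrm{id}+Q(R(\tau)+\mu)$ is absolutely analytic in $(\tau,\mu)$ and $G(0,0)=\mathrm{id}$. Lemma \ref{06.25.L2} therefore gives an absolutely analytic inverse, and by Lemma \ref{06.25.L1} the solution $v=v(\tau,\mu;u_0)$ is analytic and depends linearly on $u_0$. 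Because $Q$ maps into $(\mathrm{id}-\pi)E$, this $v$ lies in the right subspace.

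\textbf{The finite-dimensional reduced problem.} Applying $\pi$, and using $\pi(P+\lambda)=0$ on the domain by self-adjointness, gives
\begin{equation*}
\pi\bigl(R(\tau)+\mu\bigr)\bigl(u_0+v(\tau,\mu;u_0)\bigr)=0 .
\end{equation*}
Since $\pi v=0$, this equals $\mu u_0+\pi R(\tau)(u_0+v)=0$. Choosing an orthonormal basis of $K$ turns it into an $h\times h$ system
\begin{equation*}
\bigl(\mu I+B(\tau,\mu)\bigr)c=0, \qquad B(0,0)=0,
\end{equation*}
with $B$ analytic. A nontrivial $c$ exists iff
\begin{equation*}
F(\tau,\mu):=\det\bigl(\mu I+B(\tau,\mu)\bigr)=0 .
\end{equation*}
Here $F$ is analytic and $F(0,\mu)=\mu^h$. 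By Weierstrass preparation, the zero set near the origin consists of branches given by Puiseux series in $\tau$.

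\textbf{Main obstacle.} I expect the real difficulty to be producing one branch $\mu(\tau)$ that is genuinely analytic rather than fractional-power. For this I would use symmetry. One can show $F$ is, up to an analytic nonvanishing factor, the characteristic polynomial of a symmetric matrix; this is Kato's reduction, obtained by replacing $B$ with the compression of $P(\tau)$ to the analytic Riesz spectral projection, which is self-adjoint. Its roots are real, and real Puiseux roots of a symmetric analytic family are analytic by Rellich's theorem.

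A simpler alternative, if only one branch is needed: take $\mu(\tau)$ to be the largest or smallest root and argue by the 1-D Rellich argument. Failing that, I would invoke Kato's theorem \cite[Chapter VII]{Kato} directly for the self-adjoint holomorphic family $P(\tau)$.

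\textbf{Conclusion.} Given an analytic root $\mu(\tau)$, choose an analytic null vector $c(\tau)$ of $\mu I+B(\tau,\mu(\tau))$. This is possible via adjugate columns, after dividing out the common power of $\tau$ so that $c$ is nonzero at $0$. Set $u_0(\tau)=\sum_k c_k(\tau)\Phi_k$ and $u(\tau)=u_0(\tau)+v(\tau,\mu(\tau);u_0(\tau))$. Then (i) holds since $\mu(0)=0$, and (ii) holds by construction, with $u(\tau)$ analytic by Lemma \ref{06.25.L1}.
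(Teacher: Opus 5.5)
Your proposal has two genuine gaps, plus a smaller one in the conclusion.

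\textbf{The analytic branch is never actually produced.} The decisive step is that some root $\mu(\tau)$ of $F(\tau,\mu)=0$ is a genuine power series rather than a Puiseux series, and your proposal does not prove it.
\begin{itemize}
\item The identification of $F$, up to a unit, with the characteristic polynomial of a symmetric matrix is only asserted. If you had Kato's Riesz-projection compression, you would not need the Lyapunov--Schmidt reduction at all.
\item The ``simpler alternative'' is false: extremal roots of an analytic symmetric family are in general not analytic. If $R(\tau)=\tau\,\diag(1,-1)$ on a two-dimensional $K$ and $0$ on $K^\perp$, the roots are $\pm\tau$ and the largest is $|\tau|$.
\item Invoking Kato's theorem simply cites the statement.
\end{itemize}
You can close this inside your own framework. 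Complexify, take a zero $\mu\in\mathbb{C}$ of $F(\tau,\cdot)$ with $\tau$ real, and put $u=G(\tau,\mu)^{-1}u_0$ and $g=(R(\tau)+\mu)u$. Then $\pi g=0$, $u=u_0-Qg$ and $\pi u=u_0\neq0$, so $\langle g,u\rangle=-\langle g,Qg\rangle$ and
\[
\mu\|u\|^2=-\langle Qg,g\rangle-\langle R(\tau)u,u\rangle\in\R ,
\]
provided $Q=Q^*$ and $\pi Q=0$. Hence the Weierstrass polynomial of $F$ has only real roots for real $\tau$. Compare a Puiseux branch on $\tau>0$ with its continuation to $\tau<0$: a nonzero coefficient of $\tau^{k/p}$ with $p\nmid k$ would produce a nonreal root, so every branch is analytic.

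\textbf{The adjugate step can fail.} Adjugate columns give no null vector when $\ker\bigl(\mu(\tau)I+B(\tau,\mu(\tau))\bigr)$ has dimension $\geq2$ for all $\tau$. Already for $R\equiv0$ and $h\geq2$ the matrix and its adjugate vanish identically. Use Cramer's rule on a maximal minor that is not identically zero instead.

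\textbf{The claim that (ii) holds by construction uses more than the hypotheses give.} From $v=-Q(R+\mu)u$ and $\pi(R+\mu)u=0$ you obtain $(P+\lambda)u=-(R+\mu)u$ only if $Q$ maps $E$ into $D(P)$ and $(P+\lambda)Q=\mathrm{id}-\pi$. Likewise, ``$Q$ maps into $(\mathrm{id}-\pi)E$'' means $\pi Q=0$, and the reality argument above needs $Q=Q^*$. None of this follows from $Q\pi=0$ and $Q(P+\lambda)=\mathrm{id}-\pi$ when $P$ is merely symmetric, and without it the statement fails. Here is a counterexample satisfying all the stated hypotheses:
\begin{itemize}
\item $E=\R\times L^2(0,1)$, with $P(a,x)=(0,Sx)$, $Sx=-x''+x$, $D(P)=\R\times H_0^2(0,1)$, and $\lambda=0$.
\item $Q(a,x)=(0,Tx)$, where $T=S^{-1}$ on $\mathrm{Ran}\,S=\{e^{t},e^{-t}\}^\perp$ and $T=0$ on $\mathrm{span}\{e^{t},e^{-t}\}$.
\item $R(\tau)(a,x)=\tau\bigl(\int_0^1x\,dt,\;a\bigr)$, with the second entry a constant function.
\end{itemize}
Yet for $\tau\neq0$ the operator $P(\tau)$ has no eigenvalue $\nu\in(-1,1)$. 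Indeed, $\mathrm{Ran}(S-\nu)$ is orthogonal to $e^{\sqrt{1-\nu}\,t}$, which is not orthogonal to constants, so the eigen-equation forces $a=0$ and then $x=0$.

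What you need is that $P$ is self-adjoint, as $\mathcal{A}$ is. Then $P+\lambda$ has closed range $K^\perp$, $Q$ is the reduced resolvent, and all three properties hold. State this assumption and use it explicitly.

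\textbf{Relation to the paper.} The paper gives no proof of this lemma: it is quoted from Albert, and within the paper it is simply the case $j=1$ of Lemma \ref{06.25.L3}. Your self-contained route is a legitimate alternative once the two points above are repaired.
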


The following Lemma \ref{06.26.L1} is just  \cite[Theorem 5.1, p.50]{Albert}.
\begin{lemma}\label{06.26.L1}
Let $\rho_1,\rho_2\in (L^\infty(\Omega), |\cdot|_\infty)$ be two potential functions, $\lambda_n^1$ and $\lambda_n^2$ be the $n$th eigenvalues of the operators $\mcA_{\rho_1}$ and $\mcA_{\rho_2}$ respectively. Then,
\begin{equation*}
|\lambda_n^1-\lambda_n^2|\leq |\rho_1-\rho_2|_\infty.
\end{equation*}
\end{lemma}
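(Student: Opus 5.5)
We will argue via the min--max characterization of Theorem \ref{06.22.T3}, adapted to the operator $\mcA_\rho$. For $\rho\in L^\infty(\Om)$ the quadratic form of $-\mcA_\rho$ (with the sign convention of \eqref{06.25.3}, so that the eigenvalues are the critical values of the form below) is
\begin{equation*}
a_\rho(f,f)=\int_\Om |\nabla f|^2w\df x+\int_\Om \rho f^2\df x ,\qquad f\in H_0^1(\Om;w).
\end{equation*}
The first step is to check that the proofs of Theorems \ref{06.22.T2} and \ref{06.22.T3} carry over verbatim with $\int_\Om|\nabla f|^2w\df x$ replaced by $a_\rho(f,f)$. Indeed, the added term is bounded in $L^2(\Om)$ and the compactness in 3) of Assumption \ref{06.27.A1} is unaffected. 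After shifting by a constant as in Remark \ref{06.25.R1}, the form is coercive on $H_0^1(\Om;w)$, so the same spectral decomposition and Rayleigh arguments apply. This yields
\begin{equation*}
\la_n^k=\min_{H\in\La_n}\max_{f\in H\setminus\{0\}}\frac{a_{\rho_k}(f,f)}{\int_\Om f^2\df x},\qquad k=1,2 .
\end{equation*}
The shift by a constant moves all eigenvalues by the same amount, so it is harmless for the difference $\la_n^1-\la_n^2$.

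The second step is the pointwise comparison of Rayleigh quotients. For every $0\neq f\in H_0^1(\Om;w)$ we have
\begin{equation*}
\frac{a_{\rho_1}(f,f)}{\int_\Om f^2\df x}=\frac{a_{\rho_2}(f,f)}{\int_\Om f^2\df x}+\frac{\int_\Om(\rho_1-\rho_2)f^2\df x}{\int_\Om f^2\df x},
\end{equation*}
and the last term has absolute value at most $|\rho_1-\rho_2|_\iy$. Taking the maximum over $f\in H\setminus\{0\}$ for a fixed $H\in\La_n$ preserves the inequality $\max_H R_{\rho_1}\le \max_H R_{\rho_2}+|\rho_1-\rho_2|_\iy$, where $R_\rho$ denotes the Rayleigh quotient. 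Taking the minimum over $H\in\La_n$ then gives $\la_n^1\le\la_n^2+|\rho_1-\rho_2|_\iy$. Exchanging the roles of $\rho_1$ and $\rho_2$ gives the reverse inequality, and the two together give the claim.

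The main obstacle is the first step: justifying the min--max formula for $\mcA_\rho$ with the same eigenvalue ordering as in the statement. In particular we must confirm that the eigenfunctions of $\mcA_\rho$ form an orthonormal basis of $L^2(\Om)$ and that Step 1 of Lemma \ref{06.04.L6} works with the $\rho$-weighted inner product. I would handle this by noting that $(-\mcA_\rho+\gamma)^{-1}$, with $\gamma=|\rho|_\iy+1$, is a compact self-adjoint positive operator on $L^2(\Om)$. Compactness follows from Lemma \ref{06.04.L5} together with 3) of Assumption \ref{06.27.A1}. The spectral theorem for compact self-adjoint operators then provides the basis, and the rest of the argument is purely variational.
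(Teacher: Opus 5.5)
Your proof is correct and is essentially the paper's argument: both compare the Rayleigh quotients of $\mcA_{\rho_1}$ and $\mcA_{\rho_2}$ pointwise and pass through the min--max formula of Theorem~\ref{06.22.T3}, except that the paper plugs in the specific test space $\Span\{\Phi_1^1,\dots,\Phi_n^1\}$, whereas you take the minimum over all of $\La_n$. Your justification of the min--max formula for $\mcA_\rho$ via the compact resolvent $(-\mcA_\rho+\gamma)^{-1}$ supplies a step the paper takes for granted. The sign you put on $\int_\Om\rho f^2\df x$ is harmless, even though \eqref{06.25.3} taken literally gives a minus sign there, because the bound depends only on $|\rho_1-\rho_2|_\iy$.
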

\begin{proof}
Let $\Phi_1^1,\cdots, \Phi_{n}^1$ be the orthonormal basis of $\mcA_{\rho_1}$ with respect to $\lambda_1^1,\cdots, \lambda_{n}^1$, where
\begin{equation*}
\lambda_1^1\leq \lambda_2^1\leq \lambda_3^1\leq \cdots.
\end{equation*}
Denote $H_0=\Span\{\Phi_1^1,\cdots, \Phi_n^1\}$.
Then,
\begin{equation*}
\begin{split}
\lambda_n^2-\lambda_n^1
&\leq \max_{f\in H_0\setminus\{0\}}\frac{\int_\Omega |\nabla f|^2w\,dx+\int_\Omega \rho_2 |f|^2\,dx}{\int_\Omega |f|^2\,dx}-\lambda_n^1\\
&\leq \max_{f\in H_0\setminus\{0\}}\left[\frac{\int_\Omega |\nabla f|^2w\,dx+\int_\Omega \rho_1 |f|^2\,dx}{\int_\Omega |f|^2\,dx}+\frac{\int_\Omega (\rho_2-\rho_1)|f|^2\,dx}{\int_\Omega |f|^2\,dx}\right]-\lambda_n^1\\
&\leq \max_{f\in H_0\setminus\{0\}}\left[\frac{\int_\Omega |\nabla f|^2w\,dx+\int_\Omega \rho_1 |f|^2\,dx}{\int_\Omega |f|^2\,dx}\right]-\lambda_n^1+\max_{f\in H_0\setminus\{0\}}\frac{\int_\Omega (\rho_2-\rho_1)|f|^2\,dx}{\int_\Omega |f|^2\,dx}\\
&\leq |\rho_2-\rho_1|_\infty
\end{split}
\end{equation*}
by Theorem \ref{06.22.T3}. By the same argument as above, we also have $\lambda_n^1-\lambda_n^2\leq |\rho_1-\rho_2|_\infty$. These prove the lemma.
\end{proof}

The following Lemma \ref{06.24.L1}  is   \cite[Theorem 4.5, p.46]{Albert}. It depends on the abstract theorem in Appendix II Perturbation Theory in \cite{Albert} from pages  92-93 (i.e., Lemma \ref{06.25.L3}). The operator $\mcA$ in this paper  meets the conditions of this  abstract theorem,  these are all functional analysis.
\begin{lemma}[Perturbation]\label{06.24.L1}
Let $\lambda$ be an eigenvalue of multiplicity $h$ of $\mcA+\rho$. Suppose $\rho(\tau)$ is absolutely analytic in $(L^\infty(\Omega), |\cdot|_\infty)$ on some neighborhood of $\tau=0$. Then there exists a number $T_0>0$, $h$ functions $\lambda^{(1)}(\tau),\cdots, \lambda^{(h)}(\tau)$ analytic in $R=\{\tau\in\R\colon |\tau|<T_0\}$ and $h$ functions $u^{(1)}(\tau),\cdots, u^{(h)}(\tau)$ analytic in $H_0^1(\Omega;w)\cap H^2(\Omega;w)$ for $|\tau|\leq T_0$, such that

\begin{itemize} 

\item $\lambda^{(j)}(0)=\lambda, j=1,\cdots, h$;

\item $u^{(j)}(\tau)$ is an eigenfunction of $\mcA+\rho(\tau)$, with eigenvalue $\lambda^{(j)}(\tau)$ for $|\tau|\leq T_0, j=1,\cdots, h$;

\item $\{u^{(1)}(\tau),\cdots, u^{(h)}(\tau)\}$ is an orthonormal set in $L^2(\Omega)$ for each $\tau, |\tau|\leq T_0$;

\item  For every open interval $I\subseteq R$ such that $\overline{I}$ contains only the eigenvalue $\lambda$ of $\mcA+\rho$, there is a $T>0$ such that for $|\tau|<T$ there are exactly $h$ eigenvalues (counting multiplicity)

\end{itemize} 
\begin{equation*}
\lambda^{(1)}(\tau),\cdots, \lambda^{(h)}(\tau) \text{ of } \mcA+\rho(\tau) \text{ in } I.
\end{equation*}
\end{lemma}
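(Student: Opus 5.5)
We obtain the statement from the abstract perturbation result, Lemma \ref{06.25.L3}, applied with $E=L^2(\Om)$ and $P=\mcA_\rho=\mcA+\rho$ on the domain $H_0^1(\Om;w)\cap H^2(\Om;w)$, with perturbation $R(\tau)=\rho(\tau)-\rho(0)$ acting as a multiplication operator. The argument is a verification of hypotheses, an upgrade of the regularity of the eigenfunction branches, and a counting argument for the last bullet.

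\emph{Step 1: hypotheses of Lemma \ref{06.25.L3}.}
\begin{itemize}
\item By Remark \ref{06.25.R1} we may assume $\rho\ge 1$ after a shift. Then $\mcA_\rho$ is symmetric and densely defined.
\item The projection $\pi_\lambda$ and the pseudo-inverse $Q_\rho$ are defined by \eqref{06.25.6}--\eqref{06.25.7}, using the eigenbasis of $\mcA_\rho$ itself. As in Corollary \ref{06.25.C1}, they satisfy $Q_\rho\pi_\lambda=0$ and \eqref{06.25.5}.
\item Write $\rho(\tau)=\sum_j\rho_j\tau^j$ with $\sum_j|\rho_j|_\iy|\tau|^j<\iy$. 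Multiplication by $g\in L^\iy$ has operator norm $|g|_\iy$ on $L^2(\Om)$. Hence $R(\tau)=\sum_{j\ge1}\rho_j\tau^j$ is absolutely analytic in $\mcL(L^2;L^2)$, self-adjoint for each real $\tau$, and satisfies $R(0)=0$.
\end{itemize}
Lemma \ref{06.25.L3} then yields analytic $\lambda^{(j)}(\tau)$ with $\lambda^{(j)}(0)=\lambda$, and $L^2$-analytic, $L^2$-orthonormal eigenvectors $u^{(j)}(\tau)$. This gives the first three bullets, except that analyticity is so far only in $L^2(\Om)$.

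\emph{Step 2: analyticity in $H_0^1(\Om;w)\cap H^2(\Om;w)$.} Fix a real $c$ below all the $-\lambda^{(j)}(\tau)$ near $0$, so that $-c$ is in the resolvent set of $\mcA_\rho$; shifting $\rho$ once more if needed, this is arranged uniformly in small $\tau$. Rewrite the eigen-equation as
\[
(\mcA_\rho+c)\,u^{(j)}(\tau)=\bigl(c-\lambda^{(j)}(\tau)-R(\tau)\bigr)u^{(j)}(\tau).
\]
By Lemma \ref{06.04.L5} and Lemma \ref{06.28.L1}, $(\mcA_\rho+c)^{-1}$ is bounded from $L^2(\Om)$ into $H_0^1(\Om;w)\cap H^2(\Om;w)$.

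The right-hand side is analytic in $L^2$ by Lemma \ref{06.25.L1}: $R(\tau)$ is absolutely analytic, the scalar factor $c-\lambda^{(j)}(\tau)$ is analytic, and $u^{(j)}(\tau)$ is analytic in $L^2$. Applying the fixed bounded operator $(\mcA_\rho+c)^{-1}$ preserves analyticity, so $u^{(j)}$ is analytic in $H_0^1(\Om;w)\cap H^2(\Om;w)$. Shrinking $T_0$ also gives the statement for $|\tau|\le T_0$.

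\emph{Step 3: exact count in $I$ (expected main obstacle).} Lemma \ref{06.26.L1} gives $|\lambda_n(\tau)-\lambda_n(0)|\le|\rho(\tau)-\rho(0)|_\iy\to0$ for each index $n$.
\begin{itemize}
\item Let $\delta$ be the distance from $\lambda$ to the rest of the spectrum of $\mcA_\rho$; it is positive by discreteness \eqref{06.04.2}. Since $\ol I$ contains only $\lambda$, each endpoint of $I$ lies at positive distance $\delta'$ from the spectrum.
\item Take $T$ so small that $|\rho(\tau)-\rho(0)|_\iy<\min(\delta,\delta')/2$. The eigenvalues with indices $n\neq m,\dots,m+h-1$ (the block equal to $\lambda$) then stay outside $I$, because the ordered eigenvalues move by less than the gap.
\item The $h$ block eigenvalues stay inside $I$.
\item The analytic branches $\lambda^{(j)}(\tau)$ are eigenvalues, are continuous, and start at $\lambda$, so they also lie in $I$ for small $\tau$.
\item $L^2$-orthonormality of the $u^{(j)}(\tau)$ shows that they account for multiplicity $h$. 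Hence they are exactly the $h$ eigenvalues in $I$.
\end{itemize}
The delicate point is matching the ordered eigenvalues with the analytic branches. The uniform bound in Lemma \ref{06.26.L1}, independent of the index $n$, makes this a counting argument.
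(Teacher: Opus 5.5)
Your proposal is correct and follows the paper's proof: Lemma \ref{06.25.L3} gives the first three bullets, a bounded inverse from $L^2(\Om)$ into $H_0^1(\Om;w)\cap H^2(\Om;w)$ upgrades the analyticity of $u^{(j)}(\tau)$, and the index-uniform estimate of Lemma \ref{06.26.L1} gives the exact count in $I$. The only difference is in Step 2, where you invert $\mcA_\rho+c$ at a non-eigenvalue $c$ instead of the paper's splitting $u(\tau)=-Q_\rho\bigl([R(\tau)+\lambda(\tau)-\lambda]u(\tau)\bigr)+\pi_\lambda u(\tau)$; this spares you a separate treatment of the kernel component, and no $\tau$-uniformity is needed in choosing $c$, since $\mcA_\rho+c$ does not depend on $\tau$.
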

 \begin{proof}
Let $E = L^2(\Omega)$ and define $\mcA_\rho=\mcA+\rho$, where $\rho\in L^\infty(\Omega)$. Denote $Q_\rho$ as the pseudo-differential inverse of $\mcA_\rho+\lambda$, as given by \eqref{06.25.7}. Suppose $\rho(\tau)$ is an absolutely analytic function in the Banach space $(L^\infty(\Omega), |\cdot|_\infty)$ with $\rho(0)=\rho$, and let $R(\tau)=\rho(\tau)-\rho$. The first three assertions are established by Lemma \ref{06.25.L3}.
From the second assertion, we have $[\mcA_\rho(\tau)+\lambda(\tau)]u(\tau)=0$. Applying Lemma \ref{06.25.L4}, it follows that for small $\tau$, $u(\tau)$ is analytic in $L^2(\Omega)$ and $\lambda(\tau)$ is analytic in $\R$. Moreover, we obtain the equation
\begin{equation*}
\left[\mcA_\rho+\lambda\right]u(\tau)=-\left[R(\tau)+\lambda(\tau)-\lambda\right]u(\tau).
\end{equation*}
Rearranging this equation yields
\begin{equation*}
\begin{split}
u(\tau)=-Q_\rho\big(\left[R(\tau)+\lambda(\tau)-\lambda\right]u(\tau)\big)+v(\tau),
\end{split}
\end{equation*}
where $v(\tau)=\pi_\lambda u(\tau)\in \ker(\mcA_\rho+\lambda)$. Consequently, we can express $u(\tau)$ as
\begin{equation*}
u(\tau)=-Q_\rho\left[R(\tau)+\lambda(\tau)-\lambda\right]u(\tau)+v(\tau), \text{ with } v(\tau)=\sum_{j = 1}^h c_j(\tau)u_j,
\end{equation*}
where $\{u_1,\cdots, u_h\}$ forms an orthonormal basis of $\ker(\mcA_\rho+\lambda)$. Given that $\ker(\mcA_\rho+\lambda)\subseteq H_0^1(\Omega;w)\cap H^2(\Omega;w)$ and by Lemma \ref{06.25.L1}, we find that
\begin{equation*}
c_j(\tau)=(u(\tau), u_j)_{L^2(\Omega)}+\big(Q_\rho\left[R(\tau)+\lambda(\tau)-\lambda\right]u(\tau), u_j\big)_{L^2(\Omega)}, \quad j = 1,\cdots, h
\end{equation*}
are analytic functions of $\tau$ in $\R$. Thus, $v(\tau)=\sum_{j = 1}^h c_j(\tau)u_j$ is analytic in $H_0^1(\Omega;w)\cap H^2(\Omega;w)$. Since $R(\tau)+\lambda(\tau)-\lambda$ is analytic in $L^\infty(\Omega)$, it follows from Lemma \ref{06.25.L1} that $[R(\tau)+\lambda(\tau)-\lambda]u(\tau)$ is analytic in $L^2(\Omega)$. Noting that
\begin{equation*}
Q_\rho: L^2(\Omega)\rightarrow H_0^1(\Omega;w)\cap H^2(\Omega;w)
\end{equation*}
is a bounded operator as shown in \eqref{06.26.1}, we conclude by Lemma \ref{06.25.L1} that $Q_\rho[R(\tau)+\lambda(\tau)-\lambda]u(\tau)$ is analytic in $H_0^1(\Omega;w)\cap H^2(\Omega;w)$. These results collectively imply that $u(\tau)$ is analytic in $H_0^1(\Omega;w)\cap H^2(\Omega;w)$.
Now, we proceed to prove assertion four.
Let the eigenvalues of $\mcA_\rho$ be ordered as
\begin{equation*}
\lambda_1\leq \lambda_2\leq \lambda_3\leq \cdots.
\end{equation*}
Assume that
\begin{equation*}
\lambda_{n - 1}<\lambda_n=\lambda_{n + 1}=\cdots=\lambda_{n + h - 1}<\lambda_{n + h},
\end{equation*}
indicating that $\lambda_n$ has multiplicity $h$. Let $I\subseteq\R$ be an interval such that its closure $\overline{I}$ contains only the eigenvalue $\lambda_n$ of $\mcA_\rho$. Then, there exists $\delta>0$ such that $I\subseteq (\lambda_{n - 1}+\delta, \lambda_{n + h}-\delta)$. Choose $T>0$ such that $|\tau|<T$ implies $|\rho(\tau)-\rho|_\infty<\delta$.
By Lemma \ref{06.26.L1} and for $|\tau|<T$, we have $|\lambda_{n - 1}(\tau)-\lambda_{n - 1}|<\delta$ and $|\lambda_{n + h}(\tau)-\lambda_{n + h}|<\delta$. Consequently, $\lambda_{n - 1}(\tau),\lambda_{n + h}(\tau)\notin I$. This implies that $\mcA+\rho(\tau)$ has at most $h$ eigenvalues in $I$ (counting multiplicity). However, as shown in the third assertion, it has at least $h$ eigenvalues $\lambda^{(1)}(\tau),\cdots, \lambda^{(h)}(\tau)$ in $I$. This completes the proof.
\end{proof}

Now, we are in position to prove our main result.

\begin{theorem}\label{06.26.T1}
Let $n\in\N$. Then $A_n$ is open in $L^\infty(\Omega)$.
\end{theorem}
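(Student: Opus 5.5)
Since the definition of $A_n$ is not reproduced above, I take it in the sense of \cite{Albert}:
\begin{equation*}
A_n=\left\{\rho\in L^\infty(\Om)\colon \text{the first $n$ eigenvalues } \la_1(\rho)\le\cdots\le\la_n(\rho) \text{ of } \mcA_\rho=\mcA+\rho \text{ are simple}\right\},
\end{equation*}
where the eigenvalues are listed in nondecreasing order and repeated according to multiplicity. With this ordering, $\la_k(\rho)$ is simple exactly when it differs from both neighbours in the list. Hence $\rho\in A_n$ if and only if
\begin{equation*}
\la_1(\rho)<\la_2(\rho)<\cdots<\la_n(\rho)<\la_{n+1}(\rho).
\end{equation*}
So $A_n$ is described by finitely many strict inequalities between the eigenvalue functions $\rho\mapsto\la_k(\rho)$, $k=1,\dots,n+1$. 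Openness should then follow from continuity of these functions in $(L^\infty(\Om),|\cdot|_\infty)$, which is exactly Lemma \ref{06.26.L1}.

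Concretely, I fix $\rho\in A_n$ and set
\begin{equation*}
\de=\tfrac{1}{2}\min_{1\le k\le n}\big(\la_{k+1}(\rho)-\la_k(\rho)\big)>0.
\end{equation*}
Let $\sigma\in L^\infty(\Om)$ satisfy $|\sigma-\rho|_\infty<\de$. Lemma \ref{06.26.L1} gives $|\la_k(\sigma)-\la_k(\rho)|<\de$ for every $k\le n+1$. Hence each $\la_k(\sigma)$ lies in the interval $(\la_k(\rho)-\de,\la_k(\rho)+\de)$. By the choice of $\de$, these $n+1$ intervals are pairwise disjoint and appear in increasing order. Therefore
\begin{equation*}
\la_1(\sigma)<\cdots<\la_{n+1}(\sigma),
\end{equation*}
so every $\la_k(\sigma)$ with $k\le n$ is simple, and $\sigma\in A_n$. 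This shows that the ball of radius $\de$ about $\rho$ lies in $A_n$, so $A_n$ is open.

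The step needing the most care is the justification of Lemma \ref{06.26.L1} for general $\rho$. It relies on the min--max characterization of Theorem \ref{06.22.T3} applied to $\mcA_\rho$, with quadratic form $\int_\Om|\nabla f|^2w\,\df x+\int_\Om\rho f^2\,\df x$. That theorem was stated only for $\rho=0$. I would reduce to it by the shift of Remark \ref{06.25.R1}: replacing $\rho$ by $\rho+\gamma$ with $\gamma=|\rho|_\infty+1$ makes the form coercive on $H_0^1(\Om;w)$ and moves every eigenvalue by the same constant $\gamma$. After this shift, Assumption \ref{06.27.A1} (Poincar\'e inequality and compact embedding) gives a discrete spectrum and the Rayleigh-quotient arguments of Theorems \ref{06.22.T2}--\ref{06.22.T3} apply verbatim. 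The shift preserves all gaps, and therefore also simplicity. The sign convention $\mcA_\rho u=-\la u$ plays no role, since only differences of eigenvalues enter the argument.

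Alternatively, the fourth assertion of Lemma \ref{06.24.L1} with $h=1$, applied to $\rho(\tau)=\rho+\tau(\sigma-\rho)$, yields the same local persistence of simple eigenvalues. However, the direct Lipschitz argument above is cleaner and gives an explicit radius $\de$.
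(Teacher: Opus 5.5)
Your proof is correct and follows the paper's argument: fix $\rho\in A_n$, take the radius to be half the minimal gap among $\lambda_1<\cdots<\lambda_{n+1}$, and use the Lipschitz bound of Lemma \ref{06.26.L1} to keep consecutive eigenvalues strictly separated. Your remark that Lemma \ref{06.26.L1} needs the min--max characterization for general $\rho$, obtained via the shift of Remark \ref{06.25.R1}, fills a step the paper leaves implicit.
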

\begin{proof}
Let $n\in\N$.
Denote
\begin{equation*}
A_n=\left\{\rho\in L^\infty(\Omega)\colon \text{the first $n$ eigenvalues of $\mcA+\rho$ are simple}\right\}.
\end{equation*}
Set $A_0=L^\infty(\Omega), A=\bigcap_{n=1}^\infty A_n$. Then,
\begin{equation*}
A\subseteq \cdots \subseteq A_n\subseteq \cdots \subseteq A_2\subseteq A_1\subseteq A_0.
\end{equation*}
We shall show that $A_n$ is open in $L^\infty(\Omega)$.
Let $\rho\in A_n$ be given and let
\begin{equation*}
\lambda_1<\cdots<\lambda_n<\lambda_{n+1}\leq \lambda_{n+2}\leq \lambda_{n+3}\leq \cdots
\end{equation*}
be the eigenvalues of $\mcA+\rho$. Then the first $n$ are simple. Denote
\begin{equation*}
\delta=\min\left\{\lambda_{j+1}-\lambda_j\colon j=1,\cdots, n\right\}. 
\end{equation*}
It is obvious that $\delta>0$. Denote
\begin{equation*}
U=\{\rho'\in L^\infty(\Omega)\colon |\rho'-\rho|_\infty<2^{-1}\delta\}.
\end{equation*}
For each $\rho'\in U$, denote $\{\lambda_j'\}_{j\in\N}$ be the eigenvalues of $\mcA+\rho'$ with
\begin{equation*}
\lambda_1'\leq \lambda_2'\leq \cdots.
\end{equation*}
Then, by Lemma \ref{06.26.L1}, we have
\begin{equation}\label{06.28.1}
|\lambda_j'-\lambda_j|\leq |\rho'-\rho|_\infty<2^{-1}\delta
\end{equation}
for all $j\in\N$.
Now, we prove $\lambda_j'\neq \lambda_{j+1}'$ for $j=1,\cdots, n$.
Note that from \eqref{06.28.1} we have
\begin{equation*}
\begin{split}
\delta
&\leq \lambda_{j+1}-\lambda_j\leq |\lambda_{j+1}'-\lambda_{j+1}|+|\lambda_j'-\lambda_{j+1}'|+|\lambda_j'-\lambda_j|<\delta+|\lambda_{j+1}'-\lambda_j'|. 
\end{split}
\end{equation*}
This implies that $\lambda_{j+1}'-\lambda_j'>0$ for all $j=1,\cdots, n$. Hence,  the first $n$ eigenvalues of $\mcA+\rho'$ are simple for all $\rho'\in U$. This proves the theorem.
\end{proof}

 \begin{theorem}\label{06.26.T2}
Let $n\in\N$. Then, $A_{n}$ is dense in $A_{n - 1}$, with $A_0 = L^\infty(\Omega)$.
\end{theorem}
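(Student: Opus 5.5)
The plan is to fix $\rho\in A_{n-1}$ and $\varepsilon>0$, and to produce $\rho'\in A_n$ with $|\rho'-\rho|_\infty<\varepsilon$. Since $A_{n-1}$ is open by Theorem \ref{06.26.T1}, we may shrink $\varepsilon$ so that the whole ball $\{|\rho'-\rho|_\infty<\varepsilon\}$ lies in $A_{n-1}$. Then only the $n$-th eigenvalue needs attention. If $\lambda_n$ of $\mcA+\rho$ is already simple, take $\rho'=\rho$. Otherwise $\lambda_{n-1}<\lambda_n=\cdots=\lambda_{n+h-1}<\lambda_{n+h}$ with $h\ge 2$, and we argue by induction on $h$: we show that an arbitrarily small perturbation strictly lowers the multiplicity of the $n$-th eigenvalue while staying in the ball. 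After at most $h-1$ steps, each of size $<\varepsilon/h$, we land in $A_n$.

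\textbf{One step of the reduction.} Take $\rho(\tau)=\rho+\tau\psi$ with $\psi\in L^\infty(\Omega)$ to be chosen; this is trivially absolutely analytic. Lemma \ref{06.24.L1} gives analytic branches $\lambda^{(j)}(\tau)$ and $u^{(j)}(\tau)\in H_0^1(\Omega;w)\cap H^2(\Omega;w)$, $j=1,\dots,h$. These branches are $L^2$-orthonormal, and for small $|\tau|$ they are exactly the eigenvalues in an interval $I$ isolating $\lambda_n$. Differentiate the identity $(\mcA+\rho+\tau\psi+\lambda^{(j)}(\tau))u^{(j)}(\tau)=0$ at $\tau=0$ (legitimate since everything is analytic in $H^2(\Omega;w)$). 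Then pair with $u^{(k)}(0)$, using the self-adjointness of $\mcA+\rho$ to kill the term $(\mcA+\rho+\lambda)\dot u^{(j)}(0)$. This yields
\[
\int_\Omega \psi\, u^{(j)}(0)u^{(k)}(0)\,dx=-\dot\lambda^{(j)}(0)\,\delta_{jk},\qquad j,k=1,\dots,h .
\]
So the limiting basis $\{u^{(j)}(0)\}$ diagonalizes the quadratic form $M_\psi(u,v)=\int_\Omega\psi uv\,dx$ on $\ker(\mcA+\rho+\lambda_n)$. The derivatives $-\dot\lambda^{(j)}(0)$ are the eigenvalues of $M_\psi$.

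\textbf{Choosing $\psi$, the key point.} We must find $\psi$ such that $M_\psi$ restricted to the eigenspace is not a multiple of the identity. Suppose instead that it were for every $\psi$. Fix any orthonormal pair $v_1,v_2$ in the eigenspace (possible since $h\ge 2$). Then $\int\psi v_1v_2=0$ and $\int\psi(v_1^2-v_2^2)=0$ for all $\psi\in L^\infty$. Hence $v_1v_2=0$ and $v_1^2=v_2^2$ a.e., so $v_1=0$ a.e., contradicting $\|v_1\|_{L^2}=1$. (Note $v_iv_j\in L^1$, so testing against $L^\infty$ suffices.)

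With such a $\psi$, not all $\dot\lambda^{(j)}(0)$ coincide. Hence the analytic branches are not all identical, and two distinct analytic functions agree only at isolated points. So for all sufficiently small $\tau\ne0$ the $h$ eigenvalues in $I$ are not all equal. Consequently the smallest of them, which by the fourth bullet of Lemma \ref{06.24.L1} and Lemma \ref{06.26.L1} is the $n$-th eigenvalue of $\mcA+\rho(\tau)$, has multiplicity $<h$. Meanwhile $\lambda_1(\tau),\dots,\lambda_{n-1}(\tau)$ stay simple and below $I$ by Lemma \ref{06.26.L1}.

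Choosing $|\tau|\,|\psi|_\infty$ small finishes the step, and iterating finishes the proof. The main obstacle is the derivative identity: it requires care that the differentiated eigenfunctions stay in the domain $H_0^1(\Omega;w)\cap H^2(\Omega;w)$ so that integration by parts is justified in the degenerate setting. That membership is exactly what the $H^2$-analyticity in Lemma \ref{06.24.L1} provides.
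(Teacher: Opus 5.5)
Your proposal is correct and follows essentially the same route as the paper. Both use the linear perturbation $\rho+\tau\sigma$ and the first-order identity showing that the analytic branches from Lemma~\ref{06.24.L1} diagonalize the form $\int_\Omega \sigma uv\,dx$ on the eigenspace, then lower the multiplicity step by step via the fourth assertion of Lemma~\ref{06.24.L1} and Lemma~\ref{06.26.L1}. The one step you do differently is the choice of $\sigma$: the paper takes $\sigma_0=f_jf_k$ explicitly, which tacitly needs $f_j\in L^\infty(\Omega)$ and $f_jf_k\not\equiv 0$ (neither is proved there), whereas your $L^1$--$L^\infty$ duality argument uses only $f_j\in L^2(\Omega)$ and so is slightly more robust.
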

\begin{proof} The proof is split into five steps.

{\it Step 1}. Assume that $\lambda$ is an eigenvalue of multiplicity $h$ for the operator $\mcA+\rho$. Let $\rho(\tau)=\rho+\tau\sigma$ be a perturbation of $\rho$, where $\sigma\in L^\infty(\Omega)$ is a given function. By Lemma \ref{06.24.L1}, we have
\begin{equation}\label{06.26.2}
\begin{split}
\lambda^{(j)}(\tau)=\lambda +\tau\alpha_j+\tau^2\beta_j(\tau),\quad u^{(j)}(\tau)=u_j+\tau v_j+\tau^2w_j(\tau)
\end{split}
\end{equation}
for all $j = 1,\cdots, h$ and for all $|\tau|\leq T_0$, where $v_j\in H_0^1(\Omega;w)\cap H^2(\Omega;w)$ for $j = 1,\cdots, h$.

{\it Step 2: Demonstrate the existence of $\sigma\in L^\infty(\Omega)$ such that for the perturbation $\rho(\tau)=\rho+\tau\sigma$, at least two of the $\alpha_j$ in \eqref{06.26.2} are distinct.}

Indeed, let $N=\ker(\mcA+\rho+\lambda)$. Then, $N$ is a vector space of dimension $h$ contained in $L^2(\Omega)$. Clearly,
\begin{equation*}
G_\sigma: N\times N\rightarrow \R, \ G_\sigma(f,g)=-\int_\Omega \sigma fg\,dx
\end{equation*}
is a bounded bilinear form, which induces a self-adjoint bounded linear operator
\begin{equation*}
G_\sigma: N\rightarrow N, \ \left(G_\sigma(f), g\right)_{L^2(\Omega)}=-\int_\Omega \sigma fg\,dx \text{ for all } f, g\in N.
\end{equation*}
Moreover, for any orthonormal basis $\{f_1,\cdots, f_h\}\subseteq N$, the matrix of $G_\sigma$ with respect to this basis has $(G_\sigma(f_j), f_k)_{L^2(\Omega)}$ as its $(j,k)$-th entry.
Now, for a given $\sigma$ and the orthonormal basis $\{u_1,\cdots, u_h\}$ of $N$, differentiate the equation
\begin{equation*}
\left(\mcA+\rho(\tau)+\lambda^{(j)}(\tau)\right) u^{(j)}(\tau)=0
\end{equation*}
with respect to $\tau$. Considering the forms of $\lambda^{(j)}(\tau)$ and $u^{(j)}(\tau)$ in \eqref{06.26.2}, we get
\begin{equation*}
(\mcA+\rho+\lambda)v_j+(\sigma+\alpha_j)u_j=0 \text{ for } j = 1,\cdots, h.
\end{equation*}
Since $\mcA+\rho+\lambda$ is a self-adjoint operator, it follows that $(\sigma+\alpha_j)u_j\bot N$ for $j = 1,\cdots, h$. Thus, $\int_\Omega (\sigma+\alpha_j)u_ju_k\,dx = 0$ for all $j = 1,\cdots, h$, which implies that
\begin{equation*}
-\int_\Omega \sigma u_ju_k\,dx=\alpha_j\delta_{jk} \text{ with } \delta_{jk}=
\begin{cases}
1, &\text{if } j = k,\\
0, &\text{if } j\neq k.
\end{cases}
\end{equation*}
Hence, the matrix of $G_\sigma$ is diagonal in this basis, i.e., it has the form $\diag(\alpha_1,\cdots,\alpha_n)$.
Again, for any fixed basis $\{f_1,\cdots, f_n\}$ of $N$, fix $j,k\in\{1,\cdots, h\}$ with $j\neq k$, and take $\sigma_0 = f_jf_k$ (note that $f_j,f_k\in L^\infty(\Omega)$). Then, the matrix of $G_{\sigma_0}$ has $-\|f_jf_k\|_{L^2(\Omega)}^2<0$ in its $(j,k)$-th entry. So, $G_{\sigma_0}$ is not a multiple of the identity, and thus it must have at least two distinct eigenvalues. By linear algebra, $G_{\sigma_0}$ has the same eigenvalues in $N$ under different bases. Hence, for the perturbation $\rho(\tau)=\rho+\tau\sigma_0$ with this $\sigma_0$, at least two of the $\alpha_j$ must be distinct.

{\it Step 3: Prove that $A_{n + 1}$ is dense in $A_n$ for all $n\in\N\cup\{0\}$.}

Let $\rho\in A_n$ and choose $\varepsilon_0>0$ small enough such that

(i) $|\rho'-\rho|_\infty<\varepsilon_0$ implies $\rho'\in A_n$; and

(ii) $\lambda_{n + 1}$ is the only eigenvalue of $\mcA+\rho$ in the interval $[\lambda_{n + 1}-\varepsilon _0,\lambda_{n + 1}+\varepsilon _0]$.

Denote $h$ as the multiplicity of $\lambda_{n + 1}$.

{\it Step 4: Show that if $h>1$, then for all $\varepsilon \in (0,\varepsilon _0)$, there exists $\rho'\in L^\infty(\Omega)$ such that $|\rho'-\rho|_\infty<\varepsilon $ and the multiplicity of $\lambda_n'$ is $<h$. }

If {\it Step 4} holds, by applying {\it Step 4} repeatedly, we can obtain a sequence $\rho_1,\cdots, \rho_h$ such that $|\rho_{j + 1}-\rho_j|_\infty<\frac{\varepsilon }{h}$ for $j = 1,\cdots, h - 1$, and the $(n + 1)$th eigenvalue of $\mcA+\rho_j$ has multiplicity $\leq h - j + 1$. Then $|\rho_h-\rho|_\infty<\varepsilon $ and the first $n + 1$ eigenvalues of $\mcA+\rho_h$ are simple. Thus, we have completed the proof of the theorem.

{\it Step 5}. Now, we prove {\it Step 4.}

By {\it Step 2}, choose $\sigma\in L^\infty(\Omega)$ such that at least two of the $\alpha_j$ in \eqref{06.26.2} are distinct. That is, there exists $T_1>0$ such that for $0<|\tau|<T_1$, at least two of the $\lambda^{(j)}(\tau)$ are distinct.
Since $\lambda:=\lambda_n$ is the only eigenvalue of $\mcA+\rho$ in $[\lambda-\varepsilon , \lambda+\varepsilon ]$, by the fourth assertion of Lemma \ref{06.24.L1}, there exists $T_2>0$ such that for all $0<|\tau|<T_2$, $\lambda^{(1)}(\tau),\cdots, \lambda^{(h)}(\tau)$ are exactly the $h$ eigenvalues of $\mcA+\rho(\tau)$ in $(\lambda-\varepsilon , \lambda+\varepsilon )$ (counting multiplicity). Moreover, when $|\tau|<T_1$, at least two of these eigenvalues are distinct, so they must all have multiplicity $<h$.
Finally, set $\tau=\frac{1}{2}\min\{T_1, T_2, |\sigma|_\infty^{-1}\varepsilon\}$ and define $\rho'=\rho(\tau)=\rho+\tau\sigma$. Then, we have $|\rho' - \rho|_\infty=|\tau||\sigma|_\infty<\varepsilon$. Moreover, by Lemma \ref{06.26.L1}, it follows that $|\lambda_n' - \lambda_n|<\varepsilon$. From the above analysis, we can conclude that $\lambda_n'$ must coincide with one of the $\lambda^{(j)}(\tau)$ and have a multiplicity less than $h$.
\end{proof}

Recall that a subset of a Banach space is called residual if it can be written as a countable intersection of open dense sets. By combining Theorems \ref{06.26.T1} and \ref{06.26.T2}, we immediately obtain our final result, namely Theorem \ref{06.26.T3}, as presented below.

\begin{theorem}\label{06.26.T3}
The set
\begin{equation*}
\left\{\rho \in L^\infty(\Omega) \colon \mathcal{A} + \rho \text{ has simple eigenvalues}\right\}
\end{equation*}
contains a residual subset within $(L^\infty(\Omega), |\cdot|_\infty)$.
\end{theorem}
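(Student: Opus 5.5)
The plan is to identify the set in the statement with $A=\bigcap_{n=1}^\infty A_n$, where
\[
A_n=\{\rho\in L^\infty(\Om)\colon \text{the first $n$ eigenvalues of } \mcA+\rho \text{ are simple}\}
\]
as in the proof of Theorem \ref{06.26.T1}. Indeed, $\rho\in A$ exactly when every eigenvalue $\la_k$ of $\mcA+\rho$ is simple. Each eigenvalue appears among the first $n$ for some $n$, because the spectrum is discrete and has finite multiplicities (Remark \ref{06.25.R1} together with \eqref{06.04.2}). So it suffices to show that $A$ is residual. Then the set in the statement equals $A$, and in particular contains a residual subset.

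Openness comes directly from Theorem \ref{06.26.T1}: each $A_n$ is open in $(L^\infty(\Om),|\cdot|_\infty)$.

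Density is the real point, since Theorem \ref{06.26.T2} only gives that $A_n$ is dense in $A_{n-1}$, not in all of $L^\infty(\Om)$. I will argue by induction on $n$ that $A_n$ is dense in $L^\infty(\Om)$. The base case is $A_0=L^\infty(\Om)$. For the inductive step, fix $\rho\in L^\infty(\Om)$ and $\e>0$.
\begin{itemize}
\item By the inductive hypothesis, choose $\rho'\in A_{n-1}$ with $|\rho'-\rho|_\infty<\e/2$.
\item By Theorem \ref{06.26.T2}, choose $\rho''\in A_n$ with $|\rho''-\rho'|_\infty<\e/2$.
\end{itemize}
Then $|\rho''-\rho|_\infty<\e$, so $A_n$ is dense. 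This chaining is the only nontrivial step, and it is elementary. Hence each $A_n$ is open and dense, and $A=\bigcap_n A_n$ is residual by definition.

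To avoid any ambiguity with the reordering caveat in Remark \ref{06.25.R1} (possibly shifting $\rho$ by a constant so that $\rho\ge 1$), I will note that adding a constant $\gamma$ to $\rho$ shifts every eigenvalue by $\gamma$ and preserves multiplicities. The sets $A_n$ are therefore invariant under such shifts, and the normalization does not affect the argument.

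Finally, $L^\infty(\Om)$ is a Banach space. By the Baire category theorem, $A$ is dense, so it is in particular nonempty, though density is not needed for the statement as phrased. The main obstacle, if any, is checking that Theorems \ref{06.26.T1} and \ref{06.26.T2} are indeed stated for the same sets $A_n$ and with the same indexing. If Theorem \ref{06.26.T2} only yields density of $A_{n+1}$ in $A_n$ after passing through Step 4's perturbation of the $(n+1)$-th eigenvalue, the same induction still applies verbatim.
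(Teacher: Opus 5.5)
Your proposal is correct and follows the paper's own route. The paper also sets $A=\bigcap_{n} A_n$, takes openness of each $A_n$ from Theorem \ref{06.26.T1} and density from Theorem \ref{06.26.T2}, and concludes that $A$ is residual; your induction, which upgrades ``$A_n$ dense in $A_{n-1}$'' to ``$A_n$ dense in $L^\infty(\Om)$'', simply writes out the step the paper calls immediate.
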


\noindent\bf{Concluding remarks}.  Several important problems remain open and may serve as directions for future research. For example:

(1)	In Section \ref{S3}, for the nodal line of the second eigenfunction of equation \eqref{01.08.1} with weight $w = |x - x_0|^\alpha$ for $0 < \alpha < 2$ and $x_0 \in \partial\Omega \subset \mathbb{R}^2$, we know the nodal line divides the domain $\Omega$ into two subdomains, it remains unclear whether the nodal line passes through the   point $x_0$.

(2)	Also in Section \ref{S3}, how can one estimate the lower and upper bounds (see \cite{Yau}) for the Hausdorff measure of the nodal set of the eigenfunctions of \eqref{01.08.1}? In particular, do the constants appearing in these bounds depend only on the $A_p$ constant of the weight $w$?

(3)	In Section \ref{S4}, consider replacing the operator $\mathcal{A}$ with a perturbed operator $\mathcal{A}_\varepsilon$ defined by $\mathcal{A}_\varepsilon u = \operatorname{div}(A_\varepsilon \nabla u)$, where $A_\varepsilon \in \mathbb{R}^{N \times N}$ and $\varepsilon > 0$ represents a perturbation of $w$. Under this perturbation, does an analogue of Theorem \ref{06.26.T3} still hold?

(4)	In Section \ref{S4}, what occurs when the domain $\Omega$ is perturbed to a nearby domain $\Omega_\varepsilon$? That is, if $\Omega_\varepsilon$ is a suitable perturbation of $\Omega$, can we still obtain a conclusion analogous to Theorem \ref{06.26.T3}?

(5)	Finally, what can be said when $\Omega$ is replaced by a smooth, compact manifold of dimension $N$ with or without boundary? How do the main results of this paper extend to such a setting?
 

\end{document}